\newtheorem{Theorem}{Theorem}[section]
\newtheorem{Lemma}{Lemma}[section]
\newtheorem{Proposition}{Proposition}[section]
\newtheorem{Corollary}{Corollary}[section]
\newtheorem{Definition}{Definition}[section]
\newtheorem{Ex}{Example}[section]
\newtheorem{Remark}{Remark}[section]
\theoremstyle{remark}
\newcommand{\be}{\begin{equation}}
\newcommand{\ee}{\end{equation}}
\newcommand{\R}{\mathbb{R}}\newcommand{\Id}{\textrm{\rm Id}}
\newcommand{\Image}{\mathrm{Im}\,}
\newcommand{\sfk}{\mathsf{k}}
\newcommand{\sfm}{\mathsf{m}}
\newcommand{\sfl}{\mathsf{l}}
\newcommand{\ddd}{\mathrm{d}}
\newcommand{\pd}[2]{\frac{\partial#1}{\partial#2}}
\newcommand{\dd}{{\mathrm d}\,}
\newcommand{\ii}[0]{\mathrm{i \,}}
\newcommand{\iii}{\mathrm{i \,}}
\newcommand{\tr}{\operatorname{tr}}
\def\Ker{\operatorname{Ker}}
\def\Image{\operatorname{Image}}
\newcommand{\trace}{\operatorname{tr}}
\newcommand{\weg}[1]{}
\title{Nijenhuis Geometry}
\author{Alexey V. Bolsinov\footnote{ School of Mathematics,
 Loughborough University,
 LE11 3TU, UK   and   Faculty of Mechanics and Mathematics, Moscow State University, 119992, Moscow Russia\ \
 \quad {\tt A.Bolsinov@lboro.ac.uk} } \quad
\& \quad  Andrey Yu. Konyaev\footnote{Faculty of Mechanics and Mathematics, Moscow State University, 119992, Moscow Russia
 \ \ \quad {\tt  maodzund@yandex.ru}} \quad \& \quad Vladimir S. Matveev\footnote{
Institut f\"ur Mathematik, Friedrich Schiller Universit\"at Jena,
07737 Jena Germany  \ \ \quad {\tt  vladimir.s.matveev@gmail.com}} 
}  
\date{}
\begin{document}

\maketitle

\begin{abstract}
This work is the first, and main, of a series of papers in progress dedicated to Nienhuis operators, i.e., fields of endomorphisms with vanishing Nijenhuis tensor. It serves as an introduction to Nijenhuis Geometry that should be understood in much wider context than before: from local description at generic points to singularities and global analysis. The goal of the present paper is to introduce terminology, develop new important techniques (e.g., analytic functions of Nijenhuis operators, splitting theorem and linearisation), summarise and generalise basic facts (some of which are already known but we give new self-contained proofs), and more importantly, to demonstrate that the research programme proposed in the paper is realistic by proving a series of new, not at all obvious, results. 
\end{abstract}

{\bf Key words:}   field of endomorphisms,  Nijenhuis tensor,  singular point,  left-symmetric algebra

\tableofcontents

\section{Introduction}

Our motivation is based on  the following `philosophical' question: 
what is the simplest  local  geometric  condition on a tensor with two indices? 

For bivectors  $P=P^{ij}\partial_{x^i} \wedge \partial_{x^j}$,  such a condition is, possibly,  the Jacobi identity, so that  $P^{ij}$ defines  a Poisson bracket. 
Similarly, for differential 2-forms $\omega=\omega_{ij}dx^i\wedge dx^j$,  the simplest geometric condition is the closedness,   $\ddd\omega=0$,  leading to the notion of a symplectic structure.

For non-degenerate symmetric tensors $g=g_{ij}\ddd x^i\ddd x^j$, (pseudo)-Riemannian metrics,  a  natural candidate is the condition that $g$ has constant curvature or is just flat. There are, however, other more sophisticated properties like being locally symmetric,  conformally flat, Einstein or having holonomy group with special properties.
Since non-degenerate $(2,0)$-tensors  are in one-to-one correspondence with non-degenerate $(0,2)$-tensors,  these conditions can be reformulated for $(2,0)$ symmetric tensors as well\footnote{Of course, for non-degenerate skew-symmetric tensors we can do the same, then the above two conditions become equivalent:  $\omega$ is closed if and only if $P=\omega^{-1}$ satisfies the Jacobi identity.}.  For degenerate symmetric tensors $B=(b^{ij})$,  one may consider integrability condition for the kernel of $B$.  Symmetric tensors $B$ with totally non-integrable distribution defined by $\Ker B$  are known as sub-Riemannian structures.

These examples illustrate the fact that simple assumptions lead to important geometric structures and give rise to fundamental theories.  The above geometric conditions and related structures appeared independently and proved to be useful in different  topics in mathematics and  mathematical physics. Notice that in most cases,  the rich geometric theories that arise from these conditions come from both the local description and the global analysis including, in particular,  topological obstructions for existence of such structures on compact manifolds and the behaviour of singularities. The latter was especially important for Poisson structures and, due to A.\,Weinstein \cite{weinstein}, became the starting point of modern Poisson geometry.

 
The main objects of our paper  are  $(1,1)$-tensors.  As the simplest geometric condition in this case,  it is natural to choose the condition $\mathcal{N}_L=0$, where $\mathcal{N}_L$ is the Nijenhuis tensor of the $(1,1)$-tensor $L= (L^i_j)$. Recall that 
$\mathcal{N}_L$ is the $(1,2)$-tensor (vector valued 2-form) defined by    
$$
 \mathcal{N}_L(v, w) = L^2 [v, w] + [Lv, Lw] - L [Lv, w] - L [v, Lw]
$$
for arbitrary vector fields $v$ and $w$. 

\begin{Definition}\label{def:nijop}
{\rm
By \emph{Nijenhuis operators} we understand $(1,1)$-tensors with vanishing Nijenhuis tensor. A manifold $M$ together with such an operator defined on it is called a {\it Nijenhuis manifold}.   
}\end{Definition}

Our ultimate goal is to answer the following natural  questions, which are motivated by the `philosophical' discussion above:  

\begin{enumerate} 

\item Local description: to what form can one bring a Nijenhuis operator near almost every point by a local coordinate change?  \label{q:1} 

\item Singular points: what does it mean for a point to be generic or singular in the context of Nijenhuis  geometry?
What singularities are non-degenerate? What singularities are stable?  How do Nijenhuis operators behave near non-degenerate and stable singular points?  
 
 \item Global properties: what restrictions on 
a Nijenhuis operator are imposed by compactness of the manifold?  And conversely,  what are topological obstructions to a Nijenhuis manifold carrying a Nijenhuis operator with specific properties (e.g. with no singular points)?    \label{q:3}

\end{enumerate} 

These questions seem to be rather natural on their own and, we believe, deserve to be thoroughly discussed.  Each of us, however, has come to them in his own way via projective geometry and/or theory of bi-Hamiltonian systems.  We would like to give a few comments about that. 

In the theory of bi-Hamiltonian systems, which goes back to the pioneering work of F.\,Magri \cite{magri1},  Nijenhuis operators appear as recursion operators. Their role in this area has been well understood for many years \cite{magri2, kossman, turiel}.  However,  only  recently  has it  been understood that singularities of Lagrangian fibrations related to bi-Hamiltonian systems should apparently be controlled by singularities of the corresponding recursion operator (A.B. and A.\,Izosimov \cite{BolsIzos},  see also \cite{openprob}).

In the theory of integrable systems on Lie algebras,  one often considers algebraic Nijenhuis operators $L: \mathfrak g \to \mathfrak g$ defined on a Lie algebra $(\mathfrak g,  [~,~])$ and satisfying the identity $L^2[x, y] + [Lx, Ly] - L[Lx, y] - L[x, Ly] = 0$  (see e.g. \cite{kossman, pan}).  Such an operator  on $\mathfrak g$ naturally induces a left-invariant operator on the corresponding Lie group $G$ which is automatically Nijenhuis in the sense of Definition \ref{def:nijop}.  Algebraic Nijenhuis operators turned out to be the main tool  used by A.K. \cite{akon} to study Sokolov-Odesskii systems introduced in \cite{sokodess}.

In metric projective geometry, one of the important topics is a description of closed manifolds admitting pairs of geodesically equivalent Riemannian or pseudo-Riemannian metrics. There are strong topological obstructions for such manifolds.  Some of them have been discovered by V.M. \cite{MatveevRigid}.  One of possible approaches to the problem could be based on the fact that such a manifold $M$ always carries a globally defined Nijenhuis operator (see \cite{BoMa2003} by A.B. and V.M. and Section \ref{sect:geod}).

The above are just some examples, from our personal experience, illustrating possible applications of Nijenhuis geometry. Of course, the list can be continued.   We would like to especially refer to the theory of Poisson-Nijenhuis structures  (Y.\,Kosmann-Schwarzbach and F.\,Magri  \cite{kossman}), 
and classification of compatible pairs and multi-dimensional Poisson brackets of hydrodynamic type (E.\,Ferapontov \cite{Fer1},  E.\,Ferapontov,  P.\,Savoldi and P.\,Lorenzoni \cite{Fer2}); more examples can be found in \cite{bogoyavlensky}.

A conceptual explanation why Nijenhuis operators appear  in completely different mathematical setups lies in the theory of natural geometric operations \cite{Kolar}:  the Nijenhuis tensor $\mathcal N_L$ is  one of the simplest geometric objects that can be constructed from an operator $L$.  Moreover, the correspondence $L \mapsto \mathcal N_L$  is 
the only nontrivial geometric operation from ``(1,1)-tensors'' to ``(1,2)-tensors''  that is homogeneous of degree 2  (``trivial'' operations of this type are just suitable algebraic expressions in $L$, $\ddd\tr L$ and $\ddd\tr L^2$), see \cite{Kolar}, \cite{Puninskii}. 
 Therefore, it should not be surprising that compatibility conditions  of a geometric system of partial differential equations  whose coefficients can be written in terms of  $L$ are often related to vanishing of $\mathcal N_L$. That is what happens 
in the above examples where vanishing of the Nijenhuis tensor occurs as a compatibility condition for PDEs defining the corresponding geometric structure.

This work is the first, and main, of a series of papers in progress dedicated to the questions \ref{q:1}--\ref{q:3}. 
It serves as an introduction to Nijenhuis Geometry that, in view of the above agenda, should be understood in much wider context than before:  from local description at good generic points to singularities and global analysis.  The goal of the present paper is to introduce terminology,  develop new important techniques (e.g., analytic functions of Nijenhuis operators, splitting theorem and linearisation), summarise and generalise basic facts (some of which are already known but we give new self-contained proofs), and more importantly, to demonstrate that the proposed research programme is realistic by proving a series of new, not at all obvious, results  (e.g.,  Corollaries \ref{cor:stableblock}, \ref{cor:Matv1} and \ref{cor:Matv2}).

The paper is organised as follows.  We start  with a technical Section \ref{sect:2}
containing the list of useful formulas, basic facts and equivalent definitions for Nijenhuis operators which will be  used later.  One of the key points here is  our treatment of the characteristic polynomial of a Nijenhuis operator,  namely, formulas \eqref{eq10} and \eqref{eq:Lexplicit} leading to a general principle (Corollary \ref{cor:Bols1}) stating that a Nijenhuis operator can be uniquely recovered from the characteristic polynomials if its coefficients are functionally independent almost everywhere.

The most important result of Section \ref{sect:3} is, possibly, the splitting theorem which basically reduces the local study of Nijehhuis operators to the case when $L$ has only one eigenvalue at a given point (Theorems \ref{thm1}, \ref{th:bols3.2}).  This point need not to be generic in any sense!  Another useful technique developed in this section is  based on studying matrix functions $f(L)$ of Nijenhuis operators. This  allows us, in particular, to analyse real Nijenhuis operators $L$ with no real eigenvalues. We show that for such an operator one can canonically construct 
 a complex structure on the manifold with respect to which $L$ automatically becomes holomorphic. 
  
In Section  \ref{sect:4} we discuss local canonical forms for Nijenhuis operators in a neighbourhood of a generic point or of a singular point which is in some sense non-degenerate. We give slightly modified versions of classical diagonalisability theorems going back to Nijenhuis \cite{nij} and Haantjes \cite{haant}  under three different assumptions  ($\mathrm{gl}$-regularity,  algebraic genericity and functional independence of eigenvalues) and also describe canonical forms for Jordan blocks (Theorem \ref{thm:part1:10}). Combining Theorems \ref{th:bols3.2},  \ref{thm:part1:6},  \ref{thm:part1:10} and Remark \ref{rem:canformgeneral} leads us to a local normal form for Nijenhuis operators  $L$ on $M$  (for an open everywhere dense subset $M_\circ \subseteq M$ of algebraically generic points) under the condition that each eigenvalue of $L$ has geometric multiplicity one.

Another important result of Section \ref{sect:4} relates  to the following situation.  Consider a singular point $x_0\in M^n$  at which a Nijenhuis operator $L$ has a certain prescribed algebraic type, for instance, $L$ is a single Jordan block.  Then we may think of $L(x)$ as an $n$-parametric deformation of $L(x_0)$  which, in general, immediately changes its algebraic type.  What is a typical behaviour of Nijenhuis operators at such points?  We show that in the case of a single Jordan block,  Nijenhuis deformations  exactly coincide with versal deformations in the sense of V.\,Arnol'd \cite{Arnold} (Theorem \ref{thm:part1:7}) and, therefore, are stable  (Corollary \ref{cor:stableblock}).  For an arbitrary algebraic type,  a conjectural answer is given in Theorem  \ref{thm:part1:8} that describes one of possible scenarios for Nijenhuis deformations in the most general case.  However,  the stability of such a scenario is an open question.

Section \ref{sect:5} is devoted to  linearisation of Nijenhuis operators at singular points $p\in M$ of scalar type, i.e., such that $L(p) = c\cdot \Id$.  
This procedure is similar to the linearisation of Poisson structures \cite{weinstein} and leads to the so-called left-symmetric algebras (LSA).  We discuss here the linearisation problem (under what conditions a Nijenhuis operator $L$ is equivalent to its linear part $L_{\mathrm{lin}}$?)  directly related to Question 2 stated above.  The main result of this section is the proof of stability for singular points of diagonal type in the real analytic case.  This means, in other words, that a diagonal LSA is non-degenerate.

Finally, in Section 6, we prove several global results related to Question 3.  Namely we show that a Nijenhuis operator $L$ on a compact manifold cannot have non-constant complex eigenvalues (Theorem \ref{appl1}) and, moreover, if $M = S^4$, then the eigenvalues of $L$ have to be real (Corollary \ref{cor:Matv2}).  Another unexpected observation is that compactness prevents existence of differentially non-degenerate singular points (Corollary \ref{cor:Matv1}).  Two other applications are related to the theory of bi-Hamiltonian systems and projective geometry. We study the behaviour (canonical forms) of Poisson-Nijenhuis structures and geodesically equivalent metrics at singular points which, to the best of our knowledge, have never discussed before.

{\bf Acknowledgements.}   The work of A.B. and A.K. was supported by the Russian Science Foundation (project No. 17-11-01303).  Visits of A.B. and A.K. to the University of Jena were supported by DFG (via GK 1523) and DAAD (via Ostpartnerschaft programm), A.B. and A.K. appreciate hospitality and inspiring research atmosphere of the Institute of Mathematics in Jena.  The authors are grateful to A.\,Cap, E.\,Ferapontov,  A.\,Panasyuk, V.\,Novikov,  A.\,Prendergast-Smith,  I.\,Zakharevich, E.\,Vinberg for valuable discussions and comments.


\section{Definitions and basic properties}\label{sect:2}

\subsection{Equivalent definitions of the Nijenhuis tensor}

The original definition of the Nijenhuis tensor was introduced by Albert Nijenhuis in 1951 \cite[formula (3.1)]{nij} as instrument to study the following problem. Consider an operator field $L(x)$ that is diagonalisable at every point $x \in M$.   Does there exist a local coordinate system $x_1,\dots, x_n$ in which $L(x)$ is diagonal, i.e., takes the form $L(x) = \sum_{i} \lambda_i(x)\, \partial_{x_i} \otimes \ddd x_i$?

Let $L$ be a $(1,1)$-tensor field  (operator) on a smooth manifold $M$. The Nijenhuis tensor $\mathcal N_L$ of the operator $L$  is a $(1,2)$-tensor that can be defined in several equivalent ways discussed below.   The first definition is standard and interpret $\mathcal N_l$ as  as a vector-valued 2-form. 

\begin{Definition}
\label{def:Nij1}
{\rm For a pair of vector fields $\xi$ and $\eta$ the Nijenhuis  tensor ${{\mathcal N}_L}$ is defined by the following formula:
$$
{{\mathcal N}_L}(\xi, \eta) = L^2 [\xi, \eta] + [L\xi, L\eta] - L [L\xi, \eta] - L [\xi, L\eta].
$$
}\end{Definition}

If $M$ is equipped with a symmetric connection $\nabla$, then $[\xi, \eta] = \nabla_\xi  \eta - \nabla_\eta \xi$ and the definition can be formulated as follows.

\begin{Definition} {\rm For a pair of arbitrary vector fields $\xi, \eta$ the Nijenhuis tensor ${{\mathcal N}_L}$ is defined by the formula:
$$
{{\mathcal N}_L}(\xi, \eta) = \big(L \nabla_{\xi}L - \nabla_{L\xi}L\big) \eta - \big(L \nabla_{\eta}L - \nabla_{L\eta}L\big) \xi.
$$
}\end{Definition}

\begin{Definition} 
\label{def:Nij2}
{\rm In local coordinates $x^1,\dots, x^n$, the components $(\mathcal N_L)^i_{jk}$ of ${{\mathcal N}_L}$ are defined by the following formula:
$$
(\mathcal N_L)^i_{jk} = L^l_j \pd{L^i_k}{x^l} - L^l_k \pd{L^i_j}{x^l} - L^i_l \pd{L^l_k}{x^j} + L^i_l \pd{L^l_j}{x^k},
$$
where $L^i_j$ denote the components of $L$.

}\end{Definition}

Since $\mathcal N_L$ is a tensor of type $(1,2)$  we can also interpret it  as a linear map from $T M$ to $\mathrm{End}(T M)$. On the other hand, taking into account that $(\mathcal N_L)^i_{jk}$ is skew-symmetric w.r.t. the lower indices $j$ and $k$,  we can treat $\mathcal N_L$ as a linear map from the space $\Omega^1 (M)$ of  differential 1-forms to $\Omega^2 (M)$, the space of differential 2-forms. This leads to the following two definitions.

\begin{Definition} 
\label{def:Nij3}
{\rm 
If we consider ${{\mathcal N}_L}$ as a map from ``vector fields''  to ``endomorphisms'', then 
$$
{{\mathcal N}_L} : \  \xi \mapsto   L\mathcal L_{\xi}L - \mathcal L_{L\xi}L,
$$
where $\mathcal L_\xi$ denotes the Lie derivative along $\xi$.
}\end{Definition}

\begin{Definition}
\label{def:Nij4}
{\rm 
If we consider ${{\mathcal N}_L}$ as a map from ``1-forms''  to ``2-forms'', then 
$$
{{\mathcal N}_L} : \   \alpha \mapsto \beta,
$$
where
\begin{equation}
\label{eq:NijForForms}
\beta( \cdot\, , \cdot )= \ddd({L^*}^2\alpha) (\cdot\, ,\cdot) + \ddd\alpha (L\cdot\, ,L\cdot) -
\ddd(L^*\alpha) (L\cdot\, , \cdot) - \ddd(L^*\alpha) (\cdot\, , L\cdot),
\end{equation}
$L^*:\Omega^1(M)\to \Omega^1(M)$ denotes the dual operator for $L$
and $\ddd: \Omega^1(M) \to \Omega^2(M)$ is the standard exterior derivative.
}
\end{Definition}

To develop a kind of perturbation theory for Nijenhuis operators in Section \ref{sect:5.2}, we will need another fundamental operation on $(1,1)$-tensor fields called the Fr\"olicher--Nijenhuis bracket.

\begin{Definition}\label{def:FNbracket}
{\rm
Let $L_1$ and $L_2$ be $(1,1)$-tensor fields on $M$. Then the 
{\it Fr\"olicher--Nijenhuis bracket} of $L_1$ and $L_2$ is defined as
$$
[L_1, L_2]_{\mathrm{FN}} = \mathcal N_{L_1+L_2} - \mathcal N_{L_1} - \mathcal N_{L_2}
$$
or, equivalently,
$$
\begin{aligned}[]
  [L_1, L_2]_{\mathrm{FN}}   (\xi, \eta) &=
(L_1L_2 + L_2L_1) [\xi, \eta] + [L_1\xi, L_2\eta] +  [L_2\xi, L_2\eta] \\ &- L_1[L_2\xi, \eta]  - L_2 [L_1\xi, \eta]- L_2 [\xi, L_1\eta] - L_1 [\xi, L_2\eta],
\end{aligned}
$$
for any vector fields $\xi$ and $\eta$.
}\end{Definition}


\subsection{Algebraically generic and singular points.  Stability}

As explained in Introduction, one of our ultimate goals is to study the behaviour of Nijenhuis operators at singular points. In this section we introduce several types of singularities for Nijenhuis operators and discuss some natural regularity conditions.

Let $L$ be a Nijenhuis operator on a smooth manifold $M$.  At each point $p\in M$,  we can define the {\it algebraic type}  (or {\it Segre characteristic}, see e.g. \cite{Fraser}) of  $L(p):  T_pM \to T_pM$  as the structure of its Jordan canonical form which is characterised by the sizes of Jordan blocks related to each eigenvalue $\lambda_i$ of $L(p)$ (the specific values of $\lambda_i$'s are not important here).

\begin{Definition}\label{def:alggen}
{\rm
A point $p\in M$ is called {\it algebraically generic}, if the algebraic type of $L$ does not change in some neighbourhood $U(p)\subset M$.  In such a situation, we will also say that $L$ is algebraically generic at $p\in M$.
}\end{Definition}

It follows immediately from the continuity of $L$ that algebraically generic points form an open everywhere dense subset $M_\circ\subseteq M$.

\begin{Definition} \label{def:singpoint}
{\rm
 A point  $p\in M$ is called {\it singular}, if it is not algebraically generic.   
}\end{Definition}

Let us emphasise that in the context of our paper,  ``singular'' is a geometric property and refers to a bifurcation of the algebraic type of $L$  at a given point $p\in M$.   In algebra,  the property of ``being singular or regular''  is usually understood in the context of the representation theory.

\begin{Definition}
\label{def:algregular}
{\rm
An operator  $L(p)$   (and the corresponding point $p\in M$) is called  {\it $\mathrm{gl}$-regular}, if its $GL(n)$-orbit $\mathcal O(L(p))=\{ XL(p)X^{-1}~|~ X\in GL(n)\}$ has maximal dimension, namely, $\dim O(L(p)) = n^2- n $.  Equivalently,  this means that the geometric multiplicity of each eigenvalue  $\lambda_i$ of $L(p)$ (i.e., the number of Jordan $\lambda_i$-blocks in the canonical Jordan decomposition over $\mathbb C$) equals one. }\end{Definition}

One more notion is related to analytic properties of the coefficients of the characteristic polynomial $\chi_{L(x)}(t) = \det \bigl(t\cdot\Id - L(x)\bigr)= \sum_{k=0}^n t^k \sigma_{n-k}(x)$ of $L$.

\begin{Definition}
\label{def:nondeg}
{\rm
A point $p\in M$ is called  {\it differentially non-degenerate}, if the differentials $\ddd\sigma_1(x),\dots,\ddd\sigma_n(x)$ of the coefficients of the characteristic polynomial $\chi_{L(x)}(t)$  are linearly independent at this point.  
}\end{Definition}

The notions of algebraic genericity, algebraic regularity and differential non-degeneracy are related  to each other.  For example, differential non-degeneracy automatically implies algebraic regularity.  On the other hand, 
if $L(p)$ is $\mathrm{gl}$-regular and, in addition,  diagonalisable, then $p$ is algebraically generic (i.e., non-singular).   Conversely, if we assume that  $L(p)$ is not $\mathrm{gl}$-regular, we may expect that $p$ is singular in the sense of Definition \ref{def:singpoint}.  This will definitely be the case if $L$ is real analytic and there are points $x\in M$ at which $L(x)$ is $\mathrm{gl}$-regular.   

Notice that in the context of Definition \ref{def:algregular},  the `most non-regular'  operators are those of the form $L = \lambda\cdot \Id$, $\lambda\in \R$, as in this case $\dim \mathcal O(L) = 0$, i.e., $L$ is a fixed point w.r.t. the adjoint action.    Those points where $L(p)$ becomes scalar play a special role in Nijenhuis geometry  (see Section \ref{sect:4}) and we distinguish this type of points by introducing

\begin{Definition}
\label{def:scalar}
{\rm
A point $p\in M$ is called {\it of scalar type}, if $L(p) = \lambda\cdot \Id$, $\lambda\in \R$.
}\end{Definition}  

In the real analytic case,  scalar type points are automatically singular unless  $L$ is scalar on the whole manifold $M$, i.e.,  $L(x) = \lambda(x) \cdot \Id$  for a certain global function $\lambda : M \to \R$.

As we shall see below,  some of singular points are stable under small perturbations of $L$ in the class of Nijenhuis operators.   Studying and classifying such singularities is one of the most important open problems in Nijenhuis geometry.

\begin{Definition}\label{def:structstable}
{\rm
A singular point $p\in M$ is called ($C^k$-) {\it stable}, if for any perturbation
$$
L(x)  \quad \mapsto  \quad  \widetilde{L}(x) = L(x) + R_k(x)
$$
such that $\widetilde{L}(x)$ is Nijenhuis and $R_k(x)$ has zero of order $k$ at the point $p\in M$,  there exists a local diffeomorphism $\phi :  U(p) \to \widetilde U(p)$, $\phi(p)=p$, that transforms $L(x)$ to $\widetilde L(x)$.
}\end{Definition}

Notice that the above stability property makes sense not only for singular but also for algebraically generic points: some of them are stable, some are not.


\subsection{Invariant formulas}

Many formulas discussed below have already appeared, in this or that form,  in the literature (see e.g. \cite{bogoyavlensky, BoMa2003, BoMa2011, gelzak3, magri2,  turiel}). We give all the proofs to make our paper self-contained and emphasise those ideas which are important for further use.

\begin{Proposition}
\label{prop:inv_form1}  Let $L$ be a Nijenhuis operator, then for any polynomial $p(\cdot)$ with constant coefficients, the operator $p(L)$ is also Nijenhuis. In other words, ${{\mathcal N}_L}=0$ implies ${\mathcal N}_{p(L)}=0$. We also have:
\begin{equation}
\label{eq:trL^k}
\ddd (\tr L^k) = k (L^*)^{k-1} \ddd\tr L,
\end{equation} 
and more generally,
\begin{equation}
\label{eq:trf}
\ddd \bigl(\tr p(L)\bigr) = p'(L)^* \ddd\tr L,
\end{equation}
where  $L^*:T^*_qM \to T^*_qM$ denotes the operator dual  to $L$ and $p'(\cdot)$ is the derivative of $p(\cdot)$.

\end{Proposition}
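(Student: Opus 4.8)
The plan is to prove the three claims together, using the coordinate formula from Definition~\ref{def:Nij2} as the computational backbone. I would begin with the trace formula \eqref{eq:trL^k} for $k=1$ and $k=2$, which is really the heart of the matter, and then bootstrap everything else from it.

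First I would establish \eqref{eq:trL^k}. Write $\mathcal N_L = 0$ in components, contract the identity $(\mathcal N_L)^i_{jk} = 0$ over the indices $i$ and $k$ (i.e.\ set $k=i$ and sum). The four terms become
$$
L^l_j \partial_l L^i_i - L^l_i \partial_l L^i_j - L^i_l \partial_j L^l_i + L^i_l \partial_k L^l_j\big|_{k=i},
$$
and after relabelling dummy indices the middle two cancel, leaving a relation of the shape $L^l_j\,\partial_l \tr L = \partial_j \tr L^{\,?}$ up to a factor. Carefully done, this contraction yields $\partial_j(\tr L^2) = 2 L^l_j\,\partial_l(\tr L)$, which is exactly \eqref{eq:trL^k} for $k=2$, i.e.\ $\ddd(\tr L^2) = 2(L^*)\,\ddd\tr L$, once we note that $(L^*)$ acts on the covector $\ddd\tr L$ by $(L^*\alpha)_j = L^l_j\alpha_l$. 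The general case $\ddd(\tr L^k) = k(L^*)^{k-1}\ddd\tr L$ I would then obtain by induction on $k$, using the operator $p(L)=L^k$ in place of $L$: either apply the same contraction to $\mathcal N_{L^k}$ (granting the polynomial-stability claim, see below) or, more cleanly, differentiate the recursion $\tr L^{k} $ via $\partial_j \tr L^{k} = k\,(L^{k-1})^l_j\,\partial_l\tr L$, which drops out of the $k=2$ identity applied to powers. Formula \eqref{eq:trf} is then immediate by linearity: write $p(t)=\sum_m a_m t^m$, apply \eqref{eq:trL^k} termwise, and recognise $\sum_m a_m\, m\,(L^*)^{m-1} = p'(L)^* = (p'(L))^*$.

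For the stability claim, that $p(L)$ is Nijenhuis whenever $L$ is, the cleanest route is to reduce to monomials and prove $\mathcal N_{L^m}=0$ by induction on $m$, using the Fr\"olicher--Nijenhuis bracket (Definition~\ref{def:FNbracket}). The key algebraic fact is the bilinearity and the identity $\mathcal N_{AB}$-type expansion: since $\mathcal N_{L_1+L_2} = \mathcal N_{L_1}+\mathcal N_{L_2}+[L_1,L_2]_{\mathrm{FN}}$, it suffices to show that $[L^a,L^b]_{\mathrm{FN}}=0$ for all $a,b$ when $\mathcal N_L=0$. I would verify this directly on vector fields by plugging powers of $L$ into the bracket formula and repeatedly substituting $[L\xi,L\eta] = L[L\xi,\eta]+L[\xi,L\eta]-L^2[\xi,\eta]$ (the vanishing of $\mathcal N_L$), telescoping the Lie-bracket terms; the powers of $L$ commute as pointwise endomorphisms, which is what makes the cancellation work. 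Linearity in the coefficients of $p$ then upgrades monomials to arbitrary polynomials, and constant coefficients cause no trouble since $\mathcal N_{L+c\,\Id}=\mathcal N_L$ (the $\Id$ summand contributes nothing to the bracket).

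The main obstacle I anticipate is purely bookkeeping: the index contraction in the first step and the telescoping in the Fr\"olicher--Nijenhuis computation both require disciplined relabelling of dummy indices and careful tracking of which terms cancel against which. There is no conceptual difficulty once $\mathcal N_L=0$ is written out, but it is easy to misplace a factor of $k$ or a sign. I would therefore present the $k=2$ contraction in full detail as the template and then indicate the induction compactly, since the higher cases are structurally identical.
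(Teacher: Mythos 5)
Your proposal is correct in substance but follows a genuinely different route from the paper. The paper works invariantly: it rewrites $\mathcal N_L=0$ as $\mathcal L_{L\xi}L=L\mathcal L_\xi L$ (Definition \ref{def:Nij3}), iterates to get $\mathcal L_{p(L)\xi}L=p(L)\mathcal L_\xi L$, and then observes that $\mathcal D=\mathcal L_{p(L)\xi}-p(L)\mathcal L_\xi$ acts as a derivation on products of operators, so $\mathcal D(L)=0$ forces $\mathcal D(p(L))=0$, which is exactly $\mathcal N_{p(L)}=0$; the trace identity \eqref{eq:trL^k} then falls out in one line from $\tr\bigl(L^{k-1}\mathcal L_\xi L\bigr)=\tfrac1k\,\mathcal L_\xi\tr L^k$. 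You instead contract the coordinate form of $\mathcal N_L=0$ and reduce polynomial stability to $[L^a,L^b]_{\mathrm{FN}}=0$; both moves are sound, and the bracket reduction is in fact equivalent to the paper's derivation argument, since $[L_1,L_2]_{\mathrm{FN}}=\mathcal N_{L_1+L_2}-\mathcal N_{L_1}-\mathcal N_{L_2}$ by definition. Your $k=2$ contraction is right (to be precise, it is the second and fourth terms that cancel after relabelling, while the third gives $-\tfrac12\partial_j\tr L^2$). The one under-specified step is the passage from $k=2$ to general $k$: applying the $k=2$ identity to the Nijenhuis operators $L^m$ only yields $\ddd\tr L^{2m}=2(L^m)^*\,\ddd\tr L^m$, which never reaches odd exponents. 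The clean fix within your framework is to contract $(\mathcal N_L)^i_{jk}=0$ against $(L^m)^k_i$, which by the same cancellation gives the recursion $\ddd\tr L^{m+2}=\tfrac{m+2}{m+1}\,L^*\,\ddd\tr L^{m+1}$ and hence \eqref{eq:trL^k} by induction. What your approach buys is explicitness and independence from the Lie-derivative formalism; what the paper's buys is brevity, no index bookkeeping, and a single computation covering all $k$ at once.
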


\begin{proof} We use the condition ${{\mathcal N}_L}=0$ in the form
$\mathcal L_{L\xi} L = L\mathcal L_\xi L$ (see Definition \ref{def:Nij3}).  This identity  implies
\begin{equation}
\label{eq:forProp2.4}
\mathcal L_{L^n \xi} L = \mathcal L_{L(L^{n-1} \xi)} L = L\mathcal
L_{L^{n-1} \xi} L =L\mathcal L_{L(L^{n-2} \xi)} L=L^2\mathcal
L_{L^{n-2} \xi} L = \dots = L^n\mathcal L_\xi L
\end{equation}
and, therefore, by linearity
$$
\mathcal L_{p(L)\xi }L = p(L)\mathcal L_\xi L.
$$
Thus, we have
$$
(\mathcal L_{p(L)\xi} - p(L)\mathcal L_\xi)L=0
$$
Now consider the expression $\mathcal D=\mathcal L_{p(L)\xi} - p(L)\mathcal
L_\xi$ as a ``first order differential operator'' which satisfies the obvious 
property $\mathcal D(L^n)= \mathcal D(L^{n-1}) \, L + L^{n-1} \mathcal D(L)$. Hence  $\mathcal D(L)=0$
immediately implies $\mathcal D(p(L))=0$,~i.e.,
$$
\bigl(\mathcal L_{p(L)\xi} - p(L)\mathcal L_\xi\bigr)p(L)=0,
$$
which is exactly the desired condition ${\mathcal N}_{p(L)}=0$.

To prove \eqref{eq:trL^k} we use \eqref{eq:forProp2.4}  (with $n$ replaced with $k-1$). We have
$$
{\cal L}_{L^{k-1}\xi} \tr L = \tr \left( {\cal L}_{L^{k-1}\xi}  L \right) = \tr \left( L^{k-1} {\cal L}_\xi L \right) = \frac{1}{k}\tr {\cal L}_\xi L^k =
 \frac{1}{k}{\cal L}_\xi \tr L^k,
$$
which is equivalent to \eqref{eq:trL^k}. \end{proof}

\begin{Proposition} \label{prop:00}
Let $L$ be a Nijenhuis operator. 
Then for any vector field $\xi$ we have
$$
{\cal L}_{L\xi}  (\det L)  =
\det L \cdot {\cal L}_\xi  \tr L
$$
or, equivalently,    
\begin{equation}
\label{eq:lndetL}
L^* \ddd  (\det L)=\det L \cdot \ddd \tr L.
\end{equation}
More generally,  the differential of the characteristic polynomial $\chi(t)=\det (t\cdot \Id-L)$ (viewed as a smooth function on $M$ with $t$ as a formal parameter) satisfies the following relation:
\begin{equation}
\label{eq10}
L^* \bigl(\ddd \chi(t) \bigr) - t \cdot \ddd \chi(t)=\chi(t) \cdot \ddd\tr L.
\end{equation}

\end{Proposition}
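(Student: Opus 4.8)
The plan is to prove \eqref{eq10} first, since \eqref{eq:lndetL} will follow from it as a special case. The starting point is the already-established formula \eqref{eq:trf}, namely $\ddd(\tr p(L)) = p'(L)^*\,\ddd\tr L$, valid for any polynomial $p$ with constant coefficients. The key observation is that the coefficients of the characteristic polynomial can be expressed through the traces $\tr L^k$ via Newton's identities, so differentials of $\chi(t)$ are computable from \eqref{eq:trf}. A cleaner route, however, is to work directly with $\log\det(t\cdot\Id - L)$, whose differential is governed by the resolvent: formally,
\begin{equation}
\notag
\ddd\log\chi(t) = \tr\bigl((t\cdot\Id - L)^{-1}\,\ddd L\bigr),
\end{equation}
so that $\chi(t)$ plays the role of $\tr p(L)$ for the (formal power series) function $p_t(s) = -\log(t-s)$, whose derivative is $p_t'(s) = (t-s)^{-1}$. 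I would make this rigorous by treating $t$ as a large formal parameter and expanding $(t\cdot\Id - L)^{-1} = \sum_{k\ge 0} t^{-k-1} L^k$, reducing everything to the trace-power formula \eqref{eq:trf} term by term.

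Concretely, I would first show the resolvent identity
\begin{equation}
\notag
\ddd\chi(t) = \chi(t)\cdot\tr\bigl((t\cdot\Id - L)^{-1}\,\ddd L\bigr)
\end{equation}
as an identity of formal power series in $t^{-1}$, which is just the Jacobi formula $\ddd\det A = \det A\cdot\tr(A^{-1}\ddd A)$ applied to $A = t\cdot\Id - L$. The Nijenhuis condition enters through \eqref{eq:trf}: applying it to $p_t(s)=-\log(t-s)$ gives, term by term in the expansion, the relation
\begin{equation}
\notag
\ddd\bigl(\tr\log(t\cdot\Id - L)\bigr) = \bigl((t\cdot\Id - L)^{-1}\bigr)^*\,\ddd\tr L,
\end{equation}
where I use that $(p_t'(L))^* = ((t\cdot\Id - L)^{-1})^*$ and that $L^*$ commutes with forming the resolvent. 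Combining the two displays yields
\begin{equation}
\notag
\frac{\ddd\chi(t)}{\chi(t)} = \bigl((t\cdot\Id - L)^{-1}\bigr)^*\,\ddd\tr L,
\end{equation}
and then I would clear denominators by applying the operator $(t\cdot\Id - L)^*$ to both sides, obtaining $(t\cdot\Id - L)^*\,\ddd\chi(t) = \chi(t)\cdot\ddd\tr L$, which rearranges exactly into \eqref{eq10}. Setting $t=0$ (up to sign, since $\chi(0) = (-1)^n\det L$) and tracking constants then gives \eqref{eq:lndetL}.

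The main obstacle I anticipate is the rigorous handling of the two formal objects: $\chi(t)^{-1}$ and the resolvent $(t\cdot\Id - L)^{-1}$ only make sense as formal power series in $t^{-1}$ (or, locally, for $t$ outside the spectrum of $L(x)$), and I must ensure that differentiation in $x$ and the algebraic manipulation in $t$ are compatible. A safe way to sidestep subtleties is to avoid division altogether: write $\chi(t) = \sum_{k=0}^n t^k\sigma_{n-k}$ with $\sigma_0 = 1$, expand the desired identity \eqref{eq10} into its $t^k$-coefficients, and verify each coefficient identity directly. Each such identity is a polynomial relation among $\ddd\sigma_j$, $L^*$ and $\ddd\tr L$, which reduces via Newton's identities to repeated use of \eqref{eq:trL^k}; this is the routine-calculation part I would not grind through, but it is where the bookkeeping must be done carefully to match signs and indices. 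The conceptually clean resolvent argument and the explicit coefficient-by-coefficient verification are really the same computation, and I would present the resolvent version as the proof with the caveat that all manipulations are understood as identities of formal Laurent series in $t$.
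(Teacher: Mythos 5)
Your argument is correct in substance but runs in the opposite direction from the paper's, and by a rather heavier route. The paper proves \eqref{eq:lndetL} first and directly: it starts from the adjugate form of the Jacobi formula, ${\cal L}_\eta \det L = \tr\bigl(\widehat L \, {\cal L}_\eta L\bigr)$ (valid for any operator, degenerate or not, so no resolvent is needed), substitutes $\eta = L\xi$, and uses the Nijenhuis identity ${\cal L}_{L\xi}L = L\,{\cal L}_\xi L$ together with $\widehat L\, L = \det L\cdot\Id$ to get ${\cal L}_{L\xi}\det L = \det L\cdot {\cal L}_\xi \tr L$ in one line; the general formula \eqref{eq10} then comes for free by replacing $L$ with the Nijenhuis operator $t\cdot\Id - L$ and treating $t$ as a constant. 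You instead prove \eqref{eq10} first via the logarithmic derivative of $\chi(t)$ and formula \eqref{eq:trf} applied to $p_t(s)=-\log(t-s)$, then specialise to $t=0$. This works, and your reduction to \eqref{eq:trL^k} by expanding the resolvent in powers of $t^{-1}$ is legitimate, but it costs you exactly the formal-Laurent-series apparatus that the substitution trick avoids: \eqref{eq:trf} is only established for polynomials, so the term-by-term justification you sketch is genuinely needed, whereas the paper never divides by $\chi(t)$ or inverts $t\cdot\Id - L$ at all.

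One concrete warning on the bookkeeping you deferred: as written, your final identity $(t\cdot\Id - L)^*\,\ddd\chi(t) = \chi(t)\cdot\ddd\tr L$ is the \emph{negative} of \eqref{eq10}. The slip enters twice: $\ddd(t\cdot\Id - L) = -\ddd L$ in the Jacobi formula, and $\tr\log(t\cdot\Id - L) = -\tr p_t(L)$ for your choice of $p_t$. The correct intermediate relation is $\ddd\chi(t)/\chi(t) = -\bigl((t\cdot\Id - L)^{-1}\bigr)^{*}\,\ddd\tr L$, and clearing denominators gives $(L^* - t\cdot\Id)\,\ddd\chi(t) = \chi(t)\,\ddd\tr L$, which is \eqref{eq10} with the right sign.
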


\begin{proof} We first notice that for any operator $L$ (not necessarily Nijenhuis)
and vector field $\eta$ the following identity   holds:
\begin{equation}
\label{eq:forProp2.5}
{\cal L}_\eta  \det L =\tr \left(\widehat L \, {\cal L}_\eta L\right),
\end{equation}
where  $\widehat L$ denotes the co-matrix of $L$. Now suppose that $L$ is a Nijenhuis operator and therefore ${\cal L}_{L\xi} L = L{\cal L}_\xi L$  
for any vector field $\xi$ (see Definition \ref{def:Nij3}).

Replacing $\eta$ with $L\xi$ in \eqref{eq:forProp2.5} and  using the fact that $\widehat L \, L= \det L \cdot \Id$, we get:
$$
{\cal L}_{L\xi}  \det L =\tr \left(\widehat L \, {\cal L}_{L\xi} L\right) = \tr \left(\widehat L \, L\, {\cal L}_\xi L\right) = \det L \cdot \tr {\cal L}_\xi L=
\det L \cdot {\cal L}_\xi  \tr L,
$$
as stated.

Finally,  formula \eqref{eq10} is obtained from \eqref{eq:lndetL} by replacing $L$ with the Nijenhuis operator $t\cdot \Id-L$ (here we think of $t$ as
a constant).  \end{proof}


\begin{Corollary}
\label{cor:Bols6}
Let $\sigma_1, \dots, \sigma_n$ be the coefficients of the characteristic polynomial 
$$
\chi (t)= \det (t\cdot \Id - L) = t^n + \sigma_1 \, t^{n-1} + \sigma_2 \, t^{n-2} + \dots + \sigma_{n-1} \, t + \sigma_n 
$$
of a Nijenhuis operator $L$.   Then in any local coordinate system $x_1,\dots, x_n$ the following matrix relation hold:
\begin{equation}
\label{eq:Lexplicit}
J(x) \, L (x)= S_\chi (x)  \, J(x),  \quad\mbox{where }   S_\chi (x) = \begin{pmatrix}
-\sigma_1(x)  &  1  &    &    & \\ 
-\sigma_2(x) & 0 & 1&  & \\  
\vdots   &  \vdots &\ddots &\ddots &  \\
-\sigma_{n-1}(x)  & 0 &\dots & 0 &1 \\
-\sigma_n(x) & 0 & \dots & 0 & 0
\end{pmatrix}
\end{equation}
and $J(x)$ is the Jacobi matrix of the collection of functions $\sigma_1,\dots, \sigma_n$ w.r.t. the variables  $x_1,\dots, x_n$. 
\end{Corollary}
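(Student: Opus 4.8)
The plan is to extract the claimed matrix identity $J L = S_\chi J$ directly from formula \eqref{eq10} of Proposition \ref{prop:00}, which is the only non-trivial input we need. The key observation is that \eqref{eq10} is a polynomial identity in the formal parameter $t$, and comparing coefficients of the powers $t^0, t^1, \dots, t^{n-1}$ will yield exactly $n$ vector-valued relations among the differentials $\ddd\sigma_1,\dots,\ddd\sigma_n$ and $\ddd\tr L$. Since $\tr L = -\sigma_1$, the right-hand side of \eqref{eq10} is simply $-\chi(t)\,\ddd\sigma_1$, and the whole identity becomes a statement relating $L^*(\ddd\sigma_k)$ to the $\ddd\sigma_j$'s.

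First I would write $\chi(t) = t^n + \sum_{k=1}^n \sigma_k\, t^{n-k}$, so that $\ddd\chi(t) = \sum_{k=1}^n t^{n-k}\,\ddd\sigma_k$. Substituting this into \eqref{eq10} gives
\begin{equation}
\label{eq:propos1}
\sum_{k=1}^n t^{n-k}\, L^*(\ddd\sigma_k) - \sum_{k=1}^n t^{n-k+1}\,\ddd\sigma_k = -\Bigl(t^n + \sum_{j=1}^n \sigma_j\, t^{n-j}\Bigr)\ddd\sigma_1.
\end{equation}
Now I would equate the coefficients of each power of $t$. The coefficient of $t^n$ on both sides vanishes identically (the $L^*$-term starts at $t^{n-1}$, and the $t$-shifted term's leading piece $-t^n\,\ddd\sigma_1$ cancels against $-t^n\,\ddd\sigma_1$ on the right). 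For $1 \le k \le n-1$ the coefficient of $t^{n-k}$ reads $L^*(\ddd\sigma_k) - \ddd\sigma_{k+1} = -\sigma_k\,\ddd\sigma_1$, i.e.
$$
L^*(\ddd\sigma_k) = -\sigma_k\,\ddd\sigma_1 + \ddd\sigma_{k+1},
$$
and the coefficient of $t^0$ gives $L^*(\ddd\sigma_n) = -\sigma_n\,\ddd\sigma_1$.

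Finally I would assemble these $n$ relations into the asserted matrix form. Recall that $J(x)$ is the Jacobi matrix whose $k$-th row is $\ddd\sigma_k$ expressed in the coframe $\ddd x_1,\dots,\ddd x_n$. Applying the dual operator $L^*$ to a covector $\ddd\sigma_k = \sum_i \partial_{x_i}\sigma_k\, \ddd x^i$ corresponds, in these coordinates, to right-multiplication of the row vector $(\partial_{x_i}\sigma_k)_i$ by the matrix $L$; hence the left-hand side of each relation is the $k$-th row of $J L$. On the right-hand side, the relations above say precisely that the $k$-th row of $JL$ equals $-\sigma_k$ times the first row of $J$ (the row $\ddd\sigma_1$) plus the $(k+1)$-th row of $J$ (absent when $k=n$), which is exactly the $k$-th row of $S_\chi J$ with $S_\chi$ as displayed. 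This proves \eqref{eq:Lexplicit}. The only step requiring care is the bookkeeping that identifies $L^*$ acting on $\ddd\sigma_k$ with the correct row of $JL$ (as opposed to $L^{\top}$ or $J L^{\top}$); I expect this index-convention check to be the main, though minor, obstacle, and it is most safely done by writing out $(L^*\alpha)(\partial_{x_j}) = \alpha(L\partial_{x_j})$ explicitly in coordinates.
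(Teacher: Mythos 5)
Your proof is correct and follows exactly the paper's own route: the paper derives \eqref{eq:Lexplicit} from \eqref{eq10} by rewriting $\ddd\tr L = -\ddd\sigma_1$ and equating coefficients of powers of $t$, which is precisely what you carry out (in more explicit detail, including the row-by-row identification of $L^*\ddd\sigma_k$ with the $k$-th row of $JL$). No gaps; the index bookkeeping you flag as the main concern is handled correctly.
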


\begin{proof}
This matrix relation is equivalent to and can be easily obtained from  \eqref{eq10},  if we rewrite it as $L^* \bigl(\ddd \chi(t) \bigr) = - \chi(t) \cdot \ddd\sigma_1 + t \cdot \ddd \chi(t)$ and equate the terms of the same power in $t$. \end{proof}

Formula \eqref{eq:Lexplicit} implies the following important fact, which can be considered as a kind of uniqueness theorem for Nijenhuis opeartors.

\begin{Corollary}
\label{cor:Bols1}
Assume that the coefficients $\sigma_1, \dots, \sigma_n$ of the characteristic polynomial of a Nijenhuis operator $L$ are functionally independent at a point $p\in M$ (that is, their differentials $\ddd\sigma_1(p), \dots, \ddd\sigma_1(p)$ are linearly independent).  Then $L$ can be uniquely reconstructed from $\sigma_1,\dots, \sigma_n$ in a neighbourhood of $p$.

More globally, if we have two Nijenhuis operators $L_1$ and $L_2$ whose characteristic polynomials on $M$ coincide and  the coefficients of these polynomials  are functionally independent almost everywhere on $M$, then $L_1=L_2$.    
\end{Corollary}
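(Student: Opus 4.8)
The plan is to read off $L$ directly from the matrix identity \eqref{eq:Lexplicit} established in Corollary \ref{cor:Bols6}, namely $J(x)\, L(x) = S_\chi(x)\, J(x)$, where $J$ is the Jacobi matrix of $(\sigma_1,\dots,\sigma_n)$ and $S_\chi$ is the companion-type matrix whose entries are, up to sign, the coefficients $\sigma_i$. The first observation is that functional independence of $\sigma_1,\dots,\sigma_n$ at $p$ is exactly the statement that the differentials $\ddd\sigma_1(p),\dots,\ddd\sigma_n(p)$ are linearly independent, i.e. that the $n\times n$ matrix $J(p)$ is non-degenerate, $\det J(p)\neq 0$. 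Since $\det J$ is a continuous function, $J(x)$ remains invertible on some neighbourhood $U(p)$.

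On this neighbourhood I would simply invert \eqref{eq:Lexplicit} to obtain
$$
L(x) = J(x)^{-1}\, S_\chi(x)\, J(x).
$$
The right-hand side is built entirely out of the functions $\sigma_1,\dots,\sigma_n$: the matrix $S_\chi$ depends only on their values, while $J$ and $J^{-1}$ depend only on their first partial derivatives. Hence $L$ is uniquely determined by $\sigma_1,\dots,\sigma_n$ throughout $U(p)$, which is precisely the local uniqueness claim. Note that the Nijenhuis condition enters only through \eqref{eq:Lexplicit}, which already encodes it, so no further structure is needed.

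For the global statement, suppose $L_1$ and $L_2$ are Nijenhuis operators with identical characteristic polynomials on $M$, whose common coefficients $\sigma_1,\dots,\sigma_n$ are functionally independent almost everywhere. Set $\Omega = \{x\in M : \det J(x)\neq 0\}$; by hypothesis its complement has measure zero, so $\Omega$ is open and dense. Since $L_1$ and $L_2$ share the same $\sigma_i$, they determine the same $J$ and the same $S_\chi$, so the local reconstruction formula applied around each point of $\Omega$ forces $L_1 = L_2$ on $\Omega$. Both operators are smooth, in particular continuous, and $\Omega$ is dense, whence $L_1 = L_2$ on all of $M$.

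The argument is short once \eqref{eq:Lexplicit} is available, and I do not expect a serious obstacle. The only genuinely delicate point is the passage from agreement on $\Omega$ to agreement on all of $M$: this rests on $\Omega$ being dense, which holds because a set of measure zero cannot contain a nonempty open set, combined with the continuity of $L_1 - L_2$. Everything else follows directly from the invertibility of $J$ near $p$ and the explicit identity \eqref{eq:Lexplicit}.
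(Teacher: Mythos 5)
Your proposal is correct and follows exactly the paper's own argument: functional independence makes the Jacobi matrix $J$ invertible near $p$, so \eqref{eq:Lexplicit} yields $L = J^{-1} S_\chi J$, which depends only on $\sigma_1,\dots,\sigma_n$; the global claim then follows by density and continuity. The paper states this in one line, while you spell out the density/continuity step, but there is no difference in substance.
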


\begin{proof}
Formula \eqref{eq:Lexplicit} allows us, in fact, to get an explicit expression for $L$ in any local coordinate system $x_1,\dots, x_n$ at those points where $\sigma_1,\dots, \sigma_n$ are functionally independent. Namely,  
$L (x)= J^{-1}(x) \, S_\chi (x)\,  J(x)$,
which implies the statement.   \end{proof}

Notice that there are no restrictions on $\sigma_1$, \dots, $\sigma_n$ as soon as they are independent, that is, any collection of independent functions on $M$ can be taken as coefficients of $\chi_L(t)$  for some Nijenhuis operator $L$.  However at those points where the differentials  of $\sigma_1$, \dots, $\sigma_n$ become linearly dependent,  these functions must satisfy rather non-trivial restrictions.

Another straightforward implications of the above discussion is the following reformulation of the definition of a Nijenhuis operator in the special case when $\sigma_1,\dots, \sigma_n$  (or, equivalently, the functions $\tr L, \dots, \tr L^n$)  are independent almost everywhere on $M$.
\begin{Corollary}
\label{cor:Bols11}  Let $L$ be a differentially-nondegenerate operator almost everywhere on $M$, then   the following conditions are equivalent:
\begin{itemize}
\item $L$ is Nijenhuis;
\item relation \eqref{eq:trL^k} holds, i.e., $\ddd (\tr L^k) = k (L^*)^{k-1} \ddd\tr L$ for $k=2,\dots, n+1$;
\item relation \eqref{eq10} holds, i.e., $L^* \bigl(\ddd \chi(t) \bigr) - t \cdot \ddd \chi(t)=\chi(t) \cdot \ddd\tr L$.
\end{itemize}

\end{Corollary}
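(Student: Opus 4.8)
The plan is to prove the two genuinely new implications, namely that each of the relations forces $\mathcal N_L=0$, since the converse implications ``$L$ Nijenhuis $\Rightarrow$ \eqref{eq:trL^k}'' and ``$L$ Nijenhuis $\Rightarrow$ \eqref{eq10}'' are already contained in Propositions \ref{prop:inv_form1} and \ref{prop:00} and hold with no genericity assumption at all. First I would dispose of the relationship between the two candidate relations: I claim that \eqref{eq:trL^k} (for $k=2,\dots,n+1$) and \eqref{eq10} are equivalent at every point, for an arbitrary operator $L$ and without using differential non-degeneracy. Indeed, equating powers of $t$ in \eqref{eq10} exactly as in Corollary \ref{cor:Bols6} turns it into the system $L^*\ddd\sigma_m=\ddd\sigma_{m+1}-\sigma_m\,\ddd\sigma_1$ for $m=1,\dots,n$ (with $\ddd\sigma_{n+1}:=0$), whereas \eqref{eq:trL^k} reads $\ddd(\tr L^k)=-k\,(L^*)^{k-1}\ddd\sigma_1$ because $\tr L=-\sigma_1$. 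Since $\tr L^k$ is the Newton polynomial in $\sigma_1,\dots,\sigma_k$, the covector $\ddd(\tr L^k)$ is automatically the matching combination of the $\ddd\sigma_j$, and a straightforward induction on $k=2,\dots,n+1$ converts one system into the other (the value $k=n+1$ is exactly what is needed to recover the last equation $L^*\ddd\sigma_n=-\sigma_n\,\ddd\sigma_1$). Hence it suffices to treat \eqref{eq10}.

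For the decisive implication ``\eqref{eq10} $\Rightarrow$ $\mathcal N_L=0$'' I would work on the open dense set $M'\subseteq M$ where $\ddd\sigma_1,\dots,\ddd\sigma_n$ are linearly independent, which is dense by the differential non-degeneracy hypothesis. On $M'$ one may use $\sigma_1,\dots,\sigma_n$ themselves as local coordinates; then the Jacobi matrix $J$ of Corollary \ref{cor:Bols6} is the identity, so the matrix form \eqref{eq:Lexplicit} of \eqref{eq10} collapses to the plain statement $L=S_\chi$. Thus on $M'$ the operator $L$ coincides, in these coordinates, with the companion operator $S_\chi$, whose entries are affine in the coordinates: $-\sigma_i$ in the first column and $1$'s on the superdiagonal.

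It remains to check that $S_\chi$, written in the coordinates $\sigma_1,\dots,\sigma_n$, is Nijenhuis, and here the computation from Definition \ref{def:Nij2} is in fact immediate: since the coordinate dependence sits entirely in the first column, with $\partial(S_\chi)^i_1/\partial\sigma_l=-\delta^i_l$ and all other partials vanishing, the four terms of $(\mathcal N_{S_\chi})^i_{jk}$ survive only when $j=1$ or $k=1$ and then cancel in pairs, so $\mathcal N_{S_\chi}\equiv 0$ identically. (Alternatively, one avoids the index check by noting that $S_\chi$ is the expression, in the elementary-symmetric coordinates $\sigma_k=(-1)^k e_k(\lambda)$, of the classically Nijenhuis diagonal operator $\sum_i\lambda_i\,\partial_{\lambda_i}\otimes\ddd\lambda_i$, so its Nijenhuis tensor is a smooth tensor vanishing on the open set of distinct real $\lambda_i$, hence everywhere.) Consequently $\mathcal N_L=\mathcal N_{S_\chi}=0$ on $M'$, and since $M'$ is dense and $\mathcal N_L$ depends continuously on $L$ and its first derivatives, $\mathcal N_L\equiv 0$ on all of $M$, i.e.\ $L$ is Nijenhuis.

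The conceptual crux is the identification $L=S_\chi$ on $M'$ together with the (easy, as it turns out) fact that the companion operator is Nijenhuis; the role of differential non-degeneracy is precisely to make the set $M'$ on which this identification is valid dense, after which continuity finishes the job. The only genuinely fiddly bookkeeping is the Newton-identity induction relating \eqref{eq:trL^k} and \eqref{eq10}, and that is where I would expect to spend most of the effort, since everything else is either quoted from earlier results or reduces to a pairwise cancellation and a density argument. I would emphasise that without the non-degeneracy assumption relation \eqref{eq10} is strictly weaker than $\mathcal N_L=0$, so the hypothesis cannot be dropped.
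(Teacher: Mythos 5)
Your proof is correct and follows essentially the route the paper intends: the authors state this corollary as a ``straightforward implication'' of Corollary \ref{cor:Bols1} and formula \eqref{eq:Lexplicit}, i.e.\ exactly your identification $L=S_\chi$ in the coordinates $\sigma_1,\dots,\sigma_n$ on the dense set of differentially non-degenerate points, combined with the fact (used again in Theorem \ref{thm:part1:7}) that the companion operator is Nijenhuis, and a continuity argument. Your additional bookkeeping relating \eqref{eq:trL^k} to \eqref{eq10} via Newton's identities, and your cancellation check that $\mathcal N_{S_\chi}\equiv 0$, correctly fill in details the paper leaves implicit.
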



The next statements describe some properties of the eigenvalues of Nijenhuis operators.

\begin{Proposition}  
\label{prop:dlambda}
Let $L$ be a Nijenhuis operator  and $\lambda(x)$ be a smooth function satisfying the condition $\det (L-\lambda(x) \cdot \Id) \equiv 0$. In other words,  $\lambda(x)$ is a smooth eigenvalue of $L$. Then the differential  $\ddd \lambda$ satisfies the following relation:
\begin{equation}
\label{eq:dlambda}
(L - \lambda (x) )^* \ddd \lambda (x) = 0. 
\end{equation}
\end{Proposition}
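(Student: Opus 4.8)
The plan is to deduce the relation directly from the master identity \eqref{eq10}, namely $(L^* - t\cdot\Id)\,\ddd\chi(t) = \chi(t)\,\ddd\tr L$, by specialising the formal parameter $t$ to the eigenvalue $\lambda(x)$. The essential point is that \eqref{eq10} holds for \emph{each fixed} value of $t$, with $\ddd$ taken in $x$ only; so I may first differentiate in $x$ with $t$ frozen and only afterwards set $t=\lambda(x)$. Write $\chi(t,x)=\det(t\cdot\Id-L(x))$ and let $\ddd_x\chi(t)$ denote its differential in $x$ with $t$ held constant.

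First I would apply the chain rule to the identity $\chi(\lambda(x),x)\equiv 0$, which merely expresses that $\lambda$ is an eigenvalue: this gives $\partial_t\chi(\lambda,x)\,\ddd\lambda + (\ddd_x\chi)\big|_{t=\lambda} = 0$. On the other hand, evaluating \eqref{eq10} at $t=\lambda(x)$ yields $(L^*-\lambda\cdot\Id)\,(\ddd_x\chi)\big|_{t=\lambda} = \chi(\lambda,x)\,\ddd\tr L = 0$. Substituting the first relation into the second produces $\partial_t\chi(\lambda,x)\cdot (L-\lambda\cdot\Id)^*\ddd\lambda = 0$ on all of $M$. At every point where $\lambda$ is a \emph{simple} eigenvalue we have $\partial_t\chi(\lambda,x)\neq 0$, and the desired identity \eqref{eq:dlambda} follows at once.

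The main obstacle is the set where $\lambda(x)$ is a multiple eigenvalue, since there the scalar $\partial_t\chi(\lambda,x)$ vanishes and cannot be cancelled. I would first pass to the open, everywhere dense subset on which the algebraic multiplicity $m$ of $\lambda$ is locally constant (for an integer-valued, upper semicontinuous multiplicity, local constancy fails only on a closed nowhere-dense set). On such a region there are two equivalent ways to finish. One is to factor $\chi(t,x) = (t-\lambda(x))^m\,\tilde\chi(t,x)$ with $\tilde\chi(\lambda,x)\neq 0$; the quotient has smooth coefficients because division by the monic (in $t$) polynomial $(t-\lambda)^m$ introduces no denominators, and the remainder vanishes identically exactly when the multiplicity equals $m$. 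Substituting into \eqref{eq10}, dividing the resulting polynomial identity in $t$ by $(t-\lambda)^{m-1}$, and setting $t=\lambda$ gives $-m\,\tilde\chi(\lambda,x)\,(L-\lambda\cdot\Id)^*\ddd\lambda = 0$. Alternatively, one may differentiate \eqref{eq10} with respect to $t$ exactly $m-1$ times, then use $\chi(\lambda)=\dots=\chi^{(m-1)}(\lambda)=0$ and $\chi^{(m)}(\lambda)\neq 0$ together with the chain rule applied to $\chi^{(m-1)}(\lambda(x),x)\equiv 0$, and evaluate at $t=\lambda$ to reach $-\chi^{(m)}(\lambda)\,(L-\lambda\cdot\Id)^*\ddd\lambda=0$. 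In either case the nonzero scalar cancels and \eqref{eq:dlambda} holds on the dense set.

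Finally, since $(L-\lambda\cdot\Id)^*\ddd\lambda$ is a smooth, hence continuous, $1$-form that vanishes on an everywhere dense subset of $M$, it vanishes identically, which completes the argument. I expect the only genuinely delicate points to be the smoothness of the factor $\tilde\chi$ (resolved by monic division) and the density of the locally-constant-multiplicity locus; the core of the proof is the one-line specialisation of \eqref{eq10} to $t=\lambda(x)$.
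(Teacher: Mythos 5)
Your argument is correct and is essentially the paper's own proof in different packaging: the paper likewise derives \eqref{eq:dlambda} from Proposition \ref{prop:00}, handles multiplicity by writing $\det L = x_n^k f(x)$ with $f(p)\neq 0$ (which is exactly your factorisation $\chi=(t-\lambda)^m\tilde\chi$ specialised at $t=0$ after normalising $\lambda(p)=0$ and taking $\lambda=x_n$ as a coordinate), cancels the factor $x_n^{k-1}$, and concludes by density of the constant-multiplicity locus plus continuity. Keeping $t$ formal, invoking the chain rule on $\chi(\lambda(x),x)\equiv 0$, and performing the cancellation as exact monic polynomial division is a clean way to organise the same computation, and it spares the coordinate normalisation.
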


\begin{proof} Without loss of generality we may prove this relation at a point $p\in M$ which is spectrally generic in the sense that locally, in a certain neighbourhood of $p$, all eigenvalues of $L$ have constant algebraic multiplicities.  Since the set  of such points is everywhere dense,  \eqref{eq:dlambda} will follow by continuity.  We will also assume that $\ddd \lambda (p)\ne 0$, otherwise the relation is trivial.  One more additional assumption is that $\lambda$ vanishes at  $p\in M$  (otherwise we may replace $L$ with $L -\lambda(p) \cdot \Id$).

To simplify our computations we choose local coordinates $x_1,\dots, x_n$ in such a way that  $\lambda (x) = x_n$.  Notice that under above assumptions, the determinant $\det L$ can be written as $\det L = x_n^k f(x)$ where $f(p)\ne 0$ and $k$ is the multiplicity of $\lambda$.  We need to show that $L^* \ddd x_n=0$.  

We know from \eqref{eq:lndetL} that
$$
L^* (\ddd \det L)=\det L \cdot \ddd\tr L .
$$
Substituting $\det L = x_n^k f(x)$ and dividing  by $x_{n} ^{k-1}$ we obtain:
$$
L^* \bigl( k \, f(x) \ddd x_n  +  x_{n}  \ddd f(x)\bigr) = x_{n} f(x) \cdot \ddd \tr R
$$
Finally taking into account that $\lambda(p)=x_n(p)=0$,  we see that at this point $p\in M$:
$$
L^* \bigl(k  f(p) \ddd x_n \bigr)  = 0
$$
and, since $f(p)\ne 0$,  the statement follows.   \end{proof}

\begin{Corollary}
Let $\xi=\xi(x)$ be a smooth eigenvector field of a Nijenhuis operator $L$, that is $(L - \lambda \cdot \Id) \xi(x) =0$ for a smooth function $\lambda=\lambda(x)$. Then
we have 
$$
(L - \lambda\cdot \Id) \mathcal L_\xi L =0.
$$
\end{Corollary}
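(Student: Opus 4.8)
The plan is to exploit the Nijenhuis condition in the form of Definition~\ref{def:Nij3}, namely $\mathcal L_{L\xi}L = L\,\mathcal L_\xi L$, together with the eigenvector relation $L\xi = \lambda\,\xi$ and the constraint on $\ddd\lambda$ supplied by Proposition~\ref{prop:dlambda}. First I would rewrite the left-hand side of the Nijenhuis identity: since $L\xi = \lambda\,\xi$, we have $\mathcal L_{L\xi}L = \mathcal L_{\lambda\xi}L$, so the whole problem reduces to comparing $\mathcal L_{\lambda\xi}L$ with $\lambda\,\mathcal L_\xi L$.

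The key computation is the Leibniz-type rule for the Lie derivative of a $(1,1)$-tensor along a rescaled vector field. Using $[fX, Z] = f[X,Z] - (Zf)X$ and the definition $(\mathcal L_X L)(Y) = [X, LY] - L[X,Y]$, a direct calculation gives
\begin{equation*}
\mathcal L_{fX}L = f\,\mathcal L_X L - X\otimes (L^* \ddd f) + (LX)\otimes \ddd f .
\end{equation*}
I would verify this by evaluating both sides on an arbitrary vector field $Y$ and expanding the brackets, noting that $(LY)f = (L^*\ddd f)(Y)$ and $(Yf)=\ddd f(Y)$.

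Next I would specialise to $X=\xi$ and $f=\lambda$, obtaining $\mathcal L_{\lambda\xi}L = \lambda\,\mathcal L_\xi L - \xi\otimes(L^*\ddd\lambda) + (L\xi)\otimes\ddd\lambda$. The two correction terms then cancel: on the one hand $L\xi=\lambda\xi$ gives $(L\xi)\otimes\ddd\lambda = \lambda\,\xi\otimes\ddd\lambda$; on the other hand Proposition~\ref{prop:dlambda} asserts $(L-\lambda\cdot\Id)^*\ddd\lambda = 0$, i.e.\ $L^*\ddd\lambda = \lambda\,\ddd\lambda$, whence $\xi\otimes(L^*\ddd\lambda) = \lambda\,\xi\otimes\ddd\lambda$ as well. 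Therefore $\mathcal L_{\lambda\xi}L = \lambda\,\mathcal L_\xi L$.

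Finally I would combine this with the Nijenhuis identity to close the argument: $L\,\mathcal L_\xi L = \mathcal L_{L\xi}L = \mathcal L_{\lambda\xi}L = \lambda\,\mathcal L_\xi L$, which is exactly $(L-\lambda\cdot\Id)\,\mathcal L_\xi L = 0$. The only genuine subtlety, and the step I expect to require the most care, is the cancellation of the two correction terms, since it relies essentially on Proposition~\ref{prop:dlambda}; without that constraint the extra $\ddd\lambda$-terms would survive and the conclusion would fail. Everything else is a routine verification of the scaled-Lie-derivative identity.
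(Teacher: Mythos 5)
Your proposal is correct and follows essentially the same route as the paper: both rewrite $\mathcal L_{L\xi}L=\mathcal L_{\lambda\xi}L$ via the Nijenhuis identity, expand the Lie derivative along the rescaled field, and kill the two correction terms using $L\xi=\lambda\xi$ together with Proposition~\ref{prop:dlambda}. The only difference is cosmetic: the paper groups the correction terms as $(L-\lambda\cdot\Id)^*\ddd\lambda\otimes\xi$ before invoking the proposition, whereas you cancel them pairwise.
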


\begin{proof} We use the identity $L\mathcal L_\xi L - \mathcal L_{L\xi} L = 0$. Since $\xi$ is an eigenvector field, we get
$$
0 = L\mathcal L_\xi L - \mathcal L_{L\xi} L = L\mathcal L_\xi L - \mathcal L_{\lambda\xi} L=
L\mathcal L_\xi L - \lambda \mathcal L_{\xi} L - \bigl( L^*(\ddd\lambda) \otimes \xi - \ddd\lambda \otimes L\xi\bigr) =
$$
$$
(L - \lambda\cdot\Id ) \mathcal L_\xi L  - (L - \lambda\cdot \Id)^* \ddd\lambda \otimes \xi.
$$
In view of \eqref{eq:dlambda}, we get $(L - \lambda\cdot\Id ) \mathcal L_\xi L =0$ as required. \end{proof}


\subsection{$L$-invariant foliations, restriction and quotient}\label{sect:quotient}

Let $L$ be an  operator (not necessarily Nijenhuis) on a manifold $M$ and $\mathcal F$ be an $L$-invariant foliation (that is, at each point $p\in M$, the tangent space $T_p \mathcal F$ is $L$-invaraint).  In any local coordinate system $x^1,\dots, x^\sfk$, $y^1,\dots, y^\sfm$ adapted to the foliation $\mathcal F$  (i.e. such that  the vectors  $\partial_{x^1}, \dots, \partial_{x^\sfk}$ form a basis of $T\mathcal F$),  the matrix of $L$ takes the form
$$
\begin{pmatrix}
L_1(x,y) & M(x,y) \\
0 & L_2(x,y)
\end{pmatrix}.
$$  

Here  $x = (x^1,\dots, x^\sfk)$ are treated as coordinates on leaves and $y=(y^1,\dots , y^\sfm)$ as coordinates on the space of leaves $M/\mathcal F$.

From the viewpoint of Linear Algebra, at each fixed point $p=(x,y)$ we can define two natural operators,  the restriction of $L$ to $T_{p}\mathcal F$ and the quotient operator acting on $T_{p}M/T_{p}\mathcal F$ which correspond to the diagonal blocks $L_1$ and $L_2$ respectively.  

The restriction $L|_{\mathcal F} \simeq L_1(x,y)$ can be considered as an operator on a particular leaf of  $\mathcal F$ with $y$ being the parameter defining this leaf.  In particular, $L$ can be naturally restricted to every leaf.  

However, if we want to define the quotient operator $\widetilde L \simeq L_2$ as an operator on the (local) quotient space $M/ \mathcal F$,  we need an additional condition, namely, $L_2$-block  should not depend on $x$.   Let us show that this condition does not depend on the choice of an adapted coordinate system and can be defined in an invariant geometric way. 

We will say that a vector field  $\eta$ is $\mathcal F$-{\it preserving}, if the flow of $\eta$ preserves the foliation $\mathcal F$.  Equivalently, for each $\xi\in T\mathcal F$ we have $[\eta,\xi]\in T\mathcal F$, or shortly,
$[\eta,  T\mathcal F]  \subset T\mathcal F$.

\begin{Proposition}
\label{prop:quotient1}
The quotient operator $\widetilde L$ is well defined on $M/\mathcal F$  (equivalently, the diagonal block $L_2$ does not depend on $x$) if and only if for any $\mathcal F$-preserving vector field $\eta$,  the vector field $L\eta$ is also $\mathcal F$-preserving.

If $L$ is a Nijenhuis operator, then its restriction $L|_{\mathcal F}$ onto any leaf of $\mathcal F$  and the quotient operator $\widetilde L$ on $M/\mathcal F$ (provided this operator is well defined in the above sense) are both Nijenhuis.
\end{Proposition}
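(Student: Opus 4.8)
The plan is to handle the two assertions separately, reducing both to the defining identity $\mathcal L_{L\xi}L = L\mathcal L_\xi L$ (equivalently $\mathcal N_L=0$) together with the functorial behaviour of the Lie bracket under restriction to a leaf and projection to the quotient. Throughout I would write $\xi=\sum_a \xi^a\partial_{x^a}+\sum_\alpha \xi^\alpha\partial_{y^\alpha}$, using Latin indices $a,b,c,d$ for the leaf directions and Greek indices $\alpha,\beta,\gamma$ for the transverse ones.

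For the equivalence in the first part I would first record the coordinate meaning of $\mathcal F$-preserving: the condition $[\eta,T\mathcal F]\subset T\mathcal F$ translates into $\partial_{x^b}\eta^\alpha=0$ for all $b,\alpha$, i.e.\ the transverse components $\eta^\alpha$ depend on $y$ only; these are exactly the fields that descend to well-defined fields on the local quotient $M/\mathcal F$. Using the block form of $L$, the transverse part of $L\eta$ is $L_2\,\eta^y$, so $L\eta$ is again $\mathcal F$-preserving iff $\partial_{x^a}(L_2\eta^y)=0$ for every admissible $\eta^y=\eta^y(y)$; since $\eta^y(y)$ is arbitrary this forces $\partial_{x^a}L_2=0$, i.e.\ $L_2$ is independent of $x$. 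I would complement this with the invariant reason that this is the right condition: one sets $\widetilde L(\pi_*\xi):=\pi_*(L\xi)$, and independence of the chosen lift is automatic from $L$-invariance of $T\mathcal F$ (if $\pi_*\xi_1=\pi_*\xi_2$ then $\xi_1-\xi_2\in T\mathcal F$, hence $L(\xi_1-\xi_2)\in T\mathcal F$), so the only genuine requirement is that $\pi_*(L\xi)$ be defined at all, i.e.\ that $L\xi$ be $\mathcal F$-preserving.

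For the restriction I would exploit that $T\mathcal F$ is both involutive and $L$-invariant. For $\xi,\eta$ tangent to $\mathcal F$ each of $[\xi,\eta]$, $L\xi$, $L\eta$, $[L\xi,L\eta]$, $[L\xi,\eta]$, $[\xi,L\eta]$ and their $L$-images lies in $T\mathcal F$, so $\mathcal N_L(\xi,\eta)\in T\mathcal F$; moreover each term of $\mathcal N_L(\xi,\eta)$ restricts to a leaf exactly as the corresponding term of $\mathcal N_{L|_{\mathcal F}}$, because the ambient bracket of $\mathcal F$-tangent fields restricts to the intrinsic bracket on a leaf and $L$ restricts to $L|_{\mathcal F}$. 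Hence $\mathcal N_{L|_{\mathcal F}}=\mathcal N_L|_{T\mathcal F}=0$. A coordinate check confirms this: in $(\mathcal N_L)^a_{bc}$ with $b,c$ tangential, the vanishing of the lower-left block $L^\gamma_b=0$ kills every mixed contribution and the expression collapses to the Nijenhuis tensor of $L_1$. For the quotient I would run the parallel argument downstairs: lift $\bar\xi,\bar\eta$ on $M/\mathcal F$ to $\mathcal F$-preserving fields $\xi,\eta$, use that projectable fields form a Lie subalgebra with $\pi_*[\xi,\eta]=[\pi_*\xi,\pi_*\eta]$ and that, by the condition just established, $\pi_*(L\xi)=\widetilde L(\pi_*\xi)$. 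Pushing $\pi_*$ through each bracket and each $L$ in $\mathcal N_L(\xi,\eta)=L^2[\xi,\eta]+[L\xi,L\eta]-L[L\xi,\eta]-L[\xi,L\eta]$ yields precisely $\mathcal N_{\widetilde L}(\bar\xi,\bar\eta)$, so $\mathcal N_L=0$ gives $\mathcal N_{\widetilde L}(\bar\xi,\bar\eta)=0$ for all $\bar\xi,\bar\eta$.

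I expect the main obstacle to be not any single computation but the bookkeeping that makes ``$\pi_*$ commutes with everything'' and ``restriction commutes with everything'' rigorous: verifying that brackets of $\mathcal F$-preserving fields are again $\mathcal F$-preserving and project correctly, that $L$ respects projectability, and that the pointwise restriction to a leaf (resp.\ projection to the quotient) of the tensor $\mathcal N_L$ is legitimate even though not every field on a leaf or on $M/\mathcal F$ extends globally. These are standard facts about involutive distributions and projectable vector fields, but they are the steps that must be invoked with care.
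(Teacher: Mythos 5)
Your proposal is correct and follows essentially the same route as the paper: the first equivalence is the same straightforward coordinate/invariance check, the restriction statement follows from the $L$-invariance and involutivity of $T\mathcal F$ exactly as in the text, and your pushforward argument for the quotient is the invariant formulation of the paper's computation, which evaluates $\mathcal N_L(\partial_{y_\alpha},\partial_{y_\beta})$ for the specific $\mathcal F$-preserving lifts $\partial_{y_\alpha}$ and reduces modulo $\mathrm{span}(\partial_{x_1},\dots,\partial_{x_\sfk})$, i.e.\ applies $\pi_*$. The bookkeeping facts you flag (projectable fields form a Lie subalgebra, $\pi_*$ commutes with brackets and with $L$ once $L_2=L_2(y)$) are exactly what the paper invokes, so there is no gap.
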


\begin{proof}
The first statement is related to arbitrary operators (not necessarily Nijenhuis) and is straightforward.  

The fact that the restriction of a Nijenhuis operator $L$ to any $L$-invariant submanifold   (in particular, to a leaf of an $L$-invariant foliation) is still Nijenhuis follows immediately from Definition \ref{def:Nij1}.  

We treat the (local) space of leaves $M/\mathcal F$ as the quotient space with the natural projection $\pi : M \to M/\mathcal F$. It is easy to see that any vector field on the quotient $M/\mathcal F$ can be lifted to $M$ up to an $\mathcal F$-preserving vector field $\eta$  and this lift is defined up to adding an arbitrary vector field $\xi$ tangent to $\mathcal F$.

The proof can be easily done by using an adapted coordinate system. Consider the Nijenhuis condition for $\partial_{y_\alpha}$ and $\partial_{y_\beta}$  (notice that $y_1,\dots,y_\sfm$ form a natural coordinate system on the quotient $M/\mathcal F$):
$$
\mathcal N_L ( \partial_{y_\alpha}, \partial_{y_\beta}) =
L^2 [\partial_{y_\alpha},\partial_{y_\beta}] - L[L\partial_{y_\alpha},\partial_{y_\beta}] - L[\partial_{y_\alpha},L\partial_{y_\beta}] + [L\partial_{y_\alpha},L\partial_{y_\beta}]=
$$
$$
 - L[L\partial_{y_\alpha},\partial_{y_\beta}] - L[\partial_{y_\alpha},L\partial_{y_\beta}] + [L\partial_{y_\alpha},L\partial_{y_\beta}]=
$$
$$
 - L[M_\alpha^i (x,y) \partial_{x_i} + (L_2)_\alpha^\gamma (y) \partial_{y_\gamma},\partial_{y_\beta}] - L[\partial_{y_\alpha},
 M_\beta^j (x,y) \partial_{x_j} + (L_2)_\beta^\nu (y) \partial_{y_\nu}] + 
 $$
 $$
 +[M_\alpha^i (x,y) \partial_{x_i} + (L_2)_\alpha^\gamma (y) \partial_{y_\gamma},M_\beta^j (x,y) \partial_{x_j} + (L_2)_\beta^\nu (y) \partial_{y_\nu}].
$$
Here we assume summation over repeated indices $i, j, \gamma$ and $\nu$.
We can now consider the (pointwise) projection of this identity to the quotient space $T_pM/T_p\mathcal F$, that is, we consider it modulo $\mathrm{span}(\partial_{x_1},\dots, \partial_{x_{\sfk}})$.
 Using the fact that $\mathrm{span}(\partial_{x_1},\dots, \partial_{x_{\sfk}})$ is $L$-invariant and the components of $L_2$ do not depend on $x$, we get
 $$
  (L_2)_\gamma^\nu \frac{\partial (L_2)_\alpha^\gamma}{\partial_{y_\beta}}
  - (L_2)_\gamma^\nu    \frac{\partial (L_2)_\beta^\gamma} {\partial_{y_\alpha}}     + 
  (L_2)_\alpha^\gamma     \frac{\partial (L_2)_\beta^\nu}{ \partial_{y_\gamma}} 
-  (L_2)_\beta^\gamma  \frac{\partial (L_2)_\alpha^\nu} {\partial_{y_\gamma}}     = 0
$$
which is the Nijenhuis condition for the operator given by the matrix $L_2$ in coordinates $y_1,\dots, y_{\sfm}$, i.e., the quotient operator $\widetilde L$.
\end{proof}

With each Nijenhuis operator $L$ we can assign some natural invariant foliations  (integrable distributions).  

\begin{Proposition} 
\label{prop:image}
Let $\lambda(x)$ be a smooth eigenvalue of a Nijenhuis operator $L$ with (locally) constant geometric multiplicity.  
Then the distribution $\Image \bigl(L-\lambda(x)\cdot\Id\bigr)$ is smooth and integrable.    In particular,  if $L$ is degenerate and of (locally) constant rank, then the distribution $\Image L$ is smooth and integrable.
\end{Proposition}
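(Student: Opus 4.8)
The plan is to verify separately the two defining conditions of an integrable distribution — smoothness (that $\Image(L-\lambda\cdot\Id)$ is a genuine smooth subbundle) and involutivity (closedness under the Lie bracket) — and then invoke the Frobenius theorem. Write $R = L - \lambda(x)\cdot\Id$. Since the geometric multiplicity of $\lambda$ is locally constant, say equal to $m$, the kernel $\Ker R(x)$ has constant dimension $m$, hence $R$ has constant rank $n-m$. A smooth field of endomorphisms of constant rank has smooth image, so $\Image R$ is a smooth distribution of rank $n-m$; this settles smoothness and, as a by-product, shows that every local section of $\Image R$ can be written as $R\xi$ for a smooth vector field $\xi$ (one lifts through the surjective constant-rank bundle map $R\colon TM\to\Image R$, which admits a local splitting).

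For involutivity it then suffices to check that $[R\xi, R\eta]\in\Image R$ for all vector fields $\xi,\eta$, since any two local sections of $\Image R$ have this form. I would start from the Nijenhuis tensor of $R$ itself, rewriting Definition \ref{def:Nij1} as
$$
[R\xi, R\eta] = \mathcal N_R(\xi,\eta) - R^2[\xi,\eta] + R[R\xi,\eta] + R[\xi, R\eta].
$$
The last three terms manifestly lie in $\Image R$, so the entire problem reduces to showing $\mathcal N_R(\xi,\eta)\in\Image R$. Note that $R$ is \emph{not} a Nijenhuis operator — its Nijenhuis tensor does not vanish, precisely because $\lambda$ is non-constant — so this term cannot simply be discarded. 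Controlling it is the crux of the matter.

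To compute $\mathcal N_R$ I would use the Frölicher--Nijenhuis bracket of Definition \ref{def:FNbracket}: since $R = L + (-\lambda)\cdot\Id$ and both $\mathcal N_L$ (by hypothesis) and $\mathcal N_{-\lambda\cdot\Id}$ (a direct check shows $f\cdot\Id$ is Nijenhuis for any function $f$) vanish, one gets $\mathcal N_R = [L, -\lambda\cdot\Id]_{\mathrm{FN}}$. A short computation of this mixed bracket cancels all the second-order terms and leaves only the terms carrying one derivative of $\lambda$,
$$
\mathcal N_R(\xi,\eta) = -\ddd\lambda(L\xi)\,\eta + \ddd\lambda(L\eta)\,\xi - \ddd\lambda(\eta)\,L\xi + \ddd\lambda(\xi)\,L\eta.
$$
Here I would invoke Proposition \ref{prop:dlambda}: relation \eqref{eq:dlambda} reads $(L-\lambda\cdot\Id)^*\ddd\lambda = 0$, i.e. $\ddd\lambda(L\xi) = \lambda\,\ddd\lambda(\xi)$. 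Substituting this collapses the expression to
$$
\mathcal N_R(\xi,\eta) = \ddd\lambda(\xi)\,R\eta - \ddd\lambda(\eta)\,R\xi,
$$
which visibly lies in $\Image R$. This completes the reduction, establishes involutivity, and by Frobenius proves integrability.

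Finally, the stated special case ($\lambda\equiv 0$ with $L$ degenerate of constant rank) is immediate and does not even require Proposition \ref{prop:dlambda}: there $R = L$ is itself Nijenhuis, so $\mathcal N_R = 0$ and the displayed rearrangement gives $[L\xi, L\eta] = L\bigl(-L[\xi,\eta] + [L\xi,\eta] + [\xi,L\eta]\bigr)\in\Image L$ directly. The main obstacle is thus the non-constant eigenvalue case, where $\mathcal N_R$ fails to vanish; the whole argument hinges on the fact, supplied by \eqref{eq:dlambda}, that this obstruction is itself valued in $\Image R$.
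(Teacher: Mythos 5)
Your proof is correct and rests on the same ingredients as the paper's: the vanishing of $\mathcal N_L$, the Leibniz expansion of brackets with function coefficients, and the key identity $(L-\lambda\cdot\Id)^*\ddd\lambda=0$ from Proposition \ref{prop:dlambda}. The only difference is organizational: the paper expands $[L_\lambda u, L_\lambda v]$ by hand and observes that the offending terms $\ddd\lambda(L_\lambda u)\,v - \ddd\lambda(L_\lambda v)\,u$ vanish outright, whereas you package the same computation as $\mathcal N_R=[L,-\lambda\cdot\Id]_{\mathrm{FN}}$ and show the obstruction equals $\ddd\lambda(\xi)R\eta-\ddd\lambda(\eta)R\xi\in\Image R$ — an equivalent and arguably cleaner bookkeeping of the identical argument.
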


\begin{proof} Smoothness of the distribution $\Image (L-\lambda(x)\cdot\Id)$ is a general fact for all (smooth) operators $L$, not necessarily Nijenhuis.
The integrability of $\Image L$ is straightforward. Indeed,   
$$
L^2[u,v] + [Lu,Lv] - L[Lu,v]- L[u,Lv]=0,
$$
which implies
$$
[Lu,Lv]= L\bigl([Lu,v] + [u,Lv] - L [u,v]\bigr)  \in \Image L.
$$
Since $Lu$ and $Lv$ are arbitrary vector fields from the distribution
$\Image L$, the integrability  follows from the Frobenius theorem.

Next, denote $L_\lambda=L-\lambda (x) \cdot \Id$  and
compute the Lie bracket of  the vector fields $L_\lambda u$ and
$L_\lambda v$ for arbitrary $u$ and $v$:
$$
[L_\lambda u, L_\lambda v ] =
[Lu,Lv]+[\lambda u, \lambda v] - [Lu,\lambda v] - [\lambda u, Lv] =
$$
$$
L[Lu,v] + L[u,Lv] - L^2[u,v] + \lambda^2[u,v] +
\lambda \ddd\lambda(u) v \, - 
$$
$$
\lambda \ddd\lambda (v) u -
\lambda [Lu,v] - \ddd\lambda(Lu) v - \lambda [u,Lv] + \ddd\lambda(Lv) u =
$$
$$
L_\lambda[Lu,v] + L_\lambda [u,Lv] -
L_\lambda(L+\lambda\Id)[u,v] - \ddd\lambda (L_\lambda u) v +
\ddd\lambda (L_\lambda v) u.
$$
Two last terms disappear by Proposition \ref{prop:dlambda} and we finally get
$$
[L_\lambda u, L_\lambda v ] = L_\lambda \bigl([Lu,v] + [u, Lv] - (L+\lambda\Id)[u,v]  \bigr) \in \Image
L_\lambda.
$$
In view of the Frobenius theorem, this completes the proof. \end{proof}

\begin{Corollary} 
\label{cor:image_k}
In the assumptions of Proposition \ref{prop:image}, suppose in addition that the distributions  $\Image \bigl( L -\lambda(x)\cdot \Id\bigr)^k$, $k\in\mathbb N$,  are all of constant rank.  These distributions are all smooth and integrable. If, in addition, $\lambda=\lambda(x)$ is the only eigenvalue of $L$ at $x$, then there exists a coordinate
system in which $L$ takes an upper-triangular form (with $\lambda$ on the diagonal).

\end{Corollary}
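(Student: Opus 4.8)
The plan is to treat the two assertions in turn, writing $L_\lambda = L - \lambda(x)\cdot\Id$ and $D_k = \Image L_\lambda^k$. Integrability of all the $D_k$ I would prove by induction on $k$, the case $k=1$ being Proposition \ref{prop:image}. Suppose $D_k$ is integrable and let $\mathcal F_k$ be the corresponding foliation. Since $L_\lambda$ commutes with $L$ pointwise, $D_k$ is $L$-invariant, so each leaf of $\mathcal F_k$ is an $L$-invariant submanifold and the restriction $L|_{\mathcal F_k}$ is again Nijenhuis by the restriction part of Proposition \ref{prop:quotient1}. The key point is that $\lambda$ remains an eigenvalue of $L|_{\mathcal F_k}$ of locally constant geometric multiplicity: its eigenspace is $\Ker L_\lambda\cap D_k$, and the rank-nullity theorem applied to $L_\lambda|_{D_k}$ (whose image is exactly $D_{k+1}$) gives $\dim(\Ker L_\lambda\cap D_k)=\operatorname{rk}D_k-\operatorname{rk}D_{k+1}$, which is constant by hypothesis. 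Proposition \ref{prop:image} applied to $L|_{\mathcal F_k}$ then shows that $\Image(L|_{\mathcal F_k}-\lambda\cdot\Id)=L_\lambda(D_k)=D_{k+1}$ is integrable inside each leaf; since any two sections of $D_{k+1}$ are tangent to $\mathcal F_k$, their bracket may be computed leafwise and stays in $D_{k+1}$, so $D_{k+1}$ is involutive on $M$. This closes the induction.

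For the second assertion, observe first that the constant-rank hypothesis forces $L_\lambda$ to be nilpotent in a whole neighbourhood of $x$: if $\lambda$ is the only eigenvalue at $x$ then $D_N=0$ at $x$ for some $N$, and constancy of rank propagates $D_N=0$, i.e. $L_\lambda^N=0$, to the neighbourhood. Thus I obtain a descending flag of integrable, $L$-invariant, constant-rank distributions
$$
TM=D_0\supseteq D_1\supseteq\dots\supseteq D_N=0 .
$$
I would then invoke the simultaneous rectification of a nested flag of involutive distributions to produce coordinates in which each $D_k$ is spanned by an initial block of the $\partial_{x_i}$. In such coordinates $L$ is automatically block-upper-triangular with respect to the induced increasing flag, because every $D_k$ is $L$-invariant. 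The decisive observation is that the diagonal blocks are scalar: since $L_\lambda$ maps $D_k$ into $D_{k+1}$, the operator it induces on the associated graded space $D_k/D_{k+1}$ vanishes, so $L$ induces $\lambda\cdot\Id$ there. Hence each diagonal block equals $\lambda\cdot\Id$, and a block-upper-triangular matrix with scalar diagonal blocks is genuinely upper-triangular with $\lambda$ on the diagonal, as required.

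The two technical points that require care are, first, the transfer of integrability in the inductive step -- I must be sure that involutivity established leafwise upgrades to involutivity on $M$, which works precisely because $D_{k+1}\subseteq D_k=T\mathcal F_k$ -- and, second, the simultaneous rectification of the flag $\{D_k\}$, which is the main obstacle. The latter is a Frobenius-type statement for nested foliations; I would prove it by induction on the length of the flag, rectifying the smallest nonzero distribution $D_{N-1}$ first and passing to the local leaf space $Q=M/\mathcal F_{N-1}$. There each larger $D_i\supseteq D_{N-1}$ descends to a well-defined involutive distribution, the point being that for $X\in\Gamma(D_{N-1})\subseteq\Gamma(D_i)$ and $Y\in\Gamma(D_i)$ one has $[X,Y]\in\Gamma(D_i)$ by involutivity of $D_i$, so $D_i$ is invariant under the flow of $\mathcal F_{N-1}$ and hence projectable. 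Rectifying the projected flag on $Q$ by the inductive hypothesis and lifting the resulting functions, together with the leaf coordinates of $D_{N-1}$, yields the desired chart; the smoothness inherent in Frobenius' theorem handles the dependence on the transverse parameters.
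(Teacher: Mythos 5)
Your proposal is correct and follows essentially the same route as the paper: integrability is proved by induction, restricting $L$ to the leaves of the foliation generated by $\Image L_\lambda^k$ and applying Propositions \ref{prop:quotient1} and \ref{prop:image}, and the triangular form is then read off from an adapted coordinate system for the flag of integrable distributions. You merely supply details the paper leaves implicit (the rank--nullity verification of the constant-multiplicity hypothesis for the restricted operator, the upgrade from leafwise to global involutivity, and the simultaneous rectification of the nested flag).
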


\begin{proof} For $k=2$, consider the restriction of
$L$ to the foliation denerated by $\Image L_\lambda$, where $L_\lambda = L -\lambda(x)\cdot \Id$, and apply Propositions \ref{prop:quotient1} and \ref{prop:image} for the Nijenhuis operator $L|_{\Image L_ \lambda}$. Then proceed by induction.    To prove the second statement,  take the adapted coordinate system for the flag of integrable distributions 
$$
\{0\}\subset {\Image L_ \lambda^m} \subset  {\Image L_ \lambda^{m-1}} \subset \dots \subset
{\Image L_ \lambda} \subset TM,
$$  
where $m$ is such that ${\Image L_ \lambda^m} \ne \{0\}$ and $\Image L_ \lambda^{m+1}=\{0\}$. \end{proof}

In general, it is not true that $\Ker L$ defines an integrable
distribution. The following example is due to Kobayashi \cite{kob}. 
 
\begin{Ex} {\rm Consider $\R^4$ with coordinates $x^i$, $i = 1 \dots 4 $. Take the following vector fields:
$$
v_1 = \pd{}{x^1}, \quad 
 v_2 = \pd{}{x^2}, \quad 
 v_3 = \pd{}{x^3} \ \ \mbox{and} \  \
 v_4 = \pd{}{x^4} + (1 + x^3) \pd{}{x^1}. \\
$$
Define operator $L$ acting  as follows $L v_1 = v_2$, $L v_i = 0$, $i = 2 \dots 4$. For this operator $L^2 = 0$ and ${{\mathcal N}_L} = 0$. At the same time the kernel is spanned by vector fields $v_2, v_3, v_4$ and $[v_3, v_4] = v_1$ so that the distribution $\Ker L$ is not integrable.  However, we still have $[\Ker L, \Ker L ]\subset \Ker L^2$. The next proposition shows that this is always the case.}
\end{Ex}

\begin{Proposition}
Let $L$ be a Nijenhuis operator. 
If $u,v \in \Ker L$ then $[u,v]\in \Ker L^2$. Similarly, if
$u,v\in \Ker \bigl(L-\lambda(x)\cdot\Id\bigr)$, then $[u,v]\in \Ker \bigl(L-\lambda(x)\cdot\Id\bigr)^2$, where $\lambda(x)$ is a smooth eigenvalue of $L$.
\end{Proposition}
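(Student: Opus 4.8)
The plan is to read off both claims directly from the defining formula of the Nijenhuis tensor (Definition \ref{def:Nij1}), specialising the two arguments to lie in the relevant kernel. No auxiliary machinery is needed; everything reduces to substituting the eigenvector relations into
$$
\mathcal{N}_L(u,v) = L^2[u,v] + [Lu, Lv] - L[Lu,v] - L[u, Lv]
$$
and simplifying.

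For the first claim, suppose $u, v \in \Ker L$, so that $Lu = Lv = 0$. Then the three brackets containing $Lu$ or $Lv$ vanish identically, and the formula collapses to $\mathcal{N}_L(u,v) = L^2[u,v]$. Since $\mathcal{N}_L = 0$, this gives $L^2[u,v] = 0$, i.e. $[u,v] \in \Ker L^2$, with no computation beyond the substitution. For the second claim, write $L_\lambda = L - \lambda(x)\cdot\Id$ and suppose $Lu = \lambda u$, $Lv = \lambda v$. The key step is to expand the three nontrivial terms using the Leibniz rule for the Lie bracket, namely $[fX, gY] = fg[X,Y] + f\,\ddd g(X)\, Y - g\,\ddd f(Y)\, X$, applied with the scalar function $f = g = \lambda$. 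After substituting $Lu = \lambda u$ and $Lv = \lambda v$, the first-derivative contributions proportional to $\ddd\lambda(u)$ and $\ddd\lambda(v)$ show up in each of $[Lu, Lv]$, $L[Lu,v]$ and $L[u, Lv]$; I expect them to cancel in pairs, so that the whole expression collapses to
$$
\mathcal{N}_L(u,v) = L^2[u,v] - 2\lambda\, L[u,v] + \lambda^2[u,v] = (L - \lambda\cdot\Id)^2 [u,v] = L_\lambda^2 [u,v],
$$
using that $\lambda\cdot\Id$ commutes with $L$ pointwise. Since $\mathcal{N}_L = 0$, this yields $L_\lambda^2[u,v] = 0$, which is exactly $[u,v]\in\Ker\bigl(L - \lambda(x)\cdot\Id\bigr)^2$.

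The only point requiring care — and the single place where the argument could misfire — is verifying that the derivative terms $\ddd\lambda(u)$ and $\ddd\lambda(v)$ cancel exactly rather than leaving a residual multiple of $u$ or $v$; this is a short but bookkeeping-heavy expansion. It is worth emphasising that, unlike the proof of Proposition \ref{prop:dlambda}, no genericity or constant-multiplicity hypothesis on $\lambda$ is required here: the cancellation is purely algebraic, and the first claim is recovered as the special case $\lambda \equiv 0$ (for which $\ddd\lambda = 0$ and the extra terms are absent).
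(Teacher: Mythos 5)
Your proposal is correct and follows essentially the same route as the paper: direct substitution of the eigenvector relations into Definition \ref{def:Nij1}, expansion of the brackets by the Leibniz rule, and cancellation of the $\ddd\lambda(u)$, $\ddd\lambda(v)$ terms using $Lu=\lambda u$ and $Lv=\lambda v$, leaving $(L-\lambda\cdot\Id)^2[u,v]=0$. The cancellation you flag as the delicate point does go through exactly as you anticipate, and you are right that no genericity hypothesis on $\lambda$ is needed.
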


{\it Proof.}
If $Lu=0$ and $Lv=0$, then 
$$
0=\mathcal N_L(u,v)=L^2 [u,v] - L[Lu,v] - L[u,Lv] + [Lu,Lv] = L^2 [u,v],
$$
as needed. Let $Lu=\lambda u$ and $Lv=\lambda v$.
Then we have
$$
0=L^2[u,v] + [\lambda u,\lambda v] - L[\lambda u,v]- L[u,\lambda v]=
$$
$$
L^2[u,v] + \lambda^2 [u,v] + \lambda \ddd\lambda (u) v -
\lambda \ddd\lambda (v) u - \lambda L [u,v] +  \ddd\lambda (v) Lu - \lambda
L[u,v] - \ddd\lambda (u) Lv=  
$$
$$
L^2[u,v] - 2\lambda L[u,v]+ \lambda^2 [u,v] =
(L - \lambda\cdot\Id)^2 [u,v].
$$
In other words, $[u,v]\in \Ker L_\lambda^2$, as needed. \hfill $\square$


\section{Analytic matrix functions, splitting and complex Nijenhuis operators}\label{sect:3}

\subsection{Analytic functions of Nijenhuis operators}
\label{analytic}

Here we generalize Proposition \ref{prop:inv_form1} to a class of (matrix) functions  much more general than polynomials. 
Suppose a compact set   $K \subseteq \mathbb{C} $,  a function $f: {K} \to \mathbb{C} $ and an operator $L$  (on $M$)  satisfy the following assumptions:  

\begin{enumerate}[(i)] 

\item $\mathbb{C}\setminus K$ is connected (we do not require that $K$  is connected);\label{i}

 \item  $f:K\to \mathbb{C}$ is  a continuous function, and  the restriction $f |_{\mathrm{int}(K)}$ is holomorphic; \label{ii}

\item $K$ is symmetric with respect to the $x$-axes: for every  $z\in K$ its conjugate $\bar z $ also belongs to $K$. \label{iii}
 
 \item  $f(z)= \bar{f}(\bar z)$ for every $z\in K$;  \label{iiii}
  
  \item  for  every $x\in M$  we have $\mathrm{Spectrum}_x L \subset \mathrm{int}(K)$. \label{iiiii} 
  
  \end{enumerate}

Typical examples of such functions are  polynomials with real coefficients,    the functions  $e^z$, $\cos(z)$, $\sin(z)$, etc.
Under the above assumptions (\ref{i}--\ref{iiiii}), one can naturally define  an 
operator  $f(L)$.  Indeed, by  Mergelyan's theorem \cite{mergelyan}, the function $f$  can be uniformly approximated by polynomials $p_k$.  Moreover, we can choose these polynomials in such a way that  $p_k(z)= \bar p_k(\bar z)$ for every $z\in K$, i.e., their coefficients are real. 
We define $f(L)= \lim_{k\to \infty }p_k(L)$. 

It is an easy exercise  (see 
  for example   \cite[\S 1.2.2 -- 1.2.4]{higham}) to show that  the limit  exists,  is independent on the choice of the sequence $p_k$,   smoothly depends on $x\in M^m$ (actually, the function $f(L)$ is analytic in the entries of $L$), and behaves as a $(1,1)$-tensor under coordinate transformations.  Moreover,     
 for  every $x\in M $ we have: $\mathrm{Spectrum}_x f(L) =f(\mathrm{Spectrum}_x L )$.    In particular, if for  every  $x\in M$ we have $0\not\in f(\mathrm{Spectrum}_x L)$, then the tensor $f(L)$ is non-degenerate.  Finally, since uniform convergence of holomorphic functions implies convergence of derivatives, we also have $\mathcal L_\xi \bigl( f(L) \bigr) = \lim_{k\to\infty}\mathcal L_\xi \bigl( p_k(L)\bigr)$ where $\mathcal L_\xi$ denotes the Lie derivative  (in particular, a similar relation holds for partial derivatives of the entries of $f(L)$).  The latter property immediately implies

\begin{Proposition}
\label{prop:lem3} 
Let $L$  be a Nienhuis operator. Then $f(L)$ is a Nijenhuis operator for any  function $f:{K}\to \mathbb{C}$ satisfying  assumptions {\rm (\ref{i})--(\ref{iiiii})}.
\end{Proposition}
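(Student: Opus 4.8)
The plan is to deduce the statement from Proposition~\ref{prop:inv_form1} by a limiting argument, exploiting that the Nijenhuis tensor is a fixed algebraic expression in the entries of an operator and their first partial derivatives. First I would recall the construction of $f(L)$: by Mergelyan's theorem there are polynomials $p_k$ with real coefficients converging to $f$ uniformly on $K$, and $f(L) = \lim_{k\to\infty} p_k(L)$. By Proposition~\ref{prop:inv_form1}, each $p_k(L)$ is a Nijenhuis operator, so $\mathcal{N}_{p_k(L)} = 0$ for every $k$.

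Next I would fix local coordinates $x^1,\dots,x^n$ and use the coordinate description of the Nijenhuis tensor from Definition~\ref{def:Nij2}: for an arbitrary operator $A$,
\[
(\mathcal{N}_A)^i_{jk} = A^l_j \pd{A^i_k}{x^l} - A^l_k \pd{A^i_j}{x^l} - A^i_l \pd{A^l_k}{x^j} + A^i_l \pd{A^l_j}{x^k}.
\]
The crucial observation is that the right-hand side is a universal bilinear, and in particular continuous, expression in the entries $A^i_j$ and their first partial derivatives $\pd{A^i_j}{x^l}$, with no dependence on $A$ beyond these quantities.

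The analytic input needed to pass to the limit is already recorded in the discussion preceding the Proposition: since uniform convergence of holomorphic functions forces convergence of their derivatives, the entries of $p_k(L)$ converge to those of $f(L)$ together with all their first partial derivatives, locally uniformly on $M$. Substituting $A = p_k(L)$ into the displayed formula and letting $k\to\infty$, continuity of the expression yields
\[
(\mathcal{N}_{f(L)})^i_{jk} = \lim_{k\to\infty} (\mathcal{N}_{p_k(L)})^i_{jk} = 0,
\]
so $\mathcal{N}_{f(L)} = 0$, as claimed.

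The only point requiring genuine care, and hence the main obstacle, is the convergence of the first derivatives $\pd{(p_k(L))^i_j}{x^l} \to \pd{(f(L))^i_j}{x^l}$, rather than of the entries alone. This rests on the fact that $f(L)$ depends analytically on the entries of $L$ and that $p_k \to f$ uniformly on a neighbourhood of the compact spectrum of $L$, which propagates to uniform convergence of the associated matrix functions and of their derivatives with respect to $L$; chaining this with the smooth dependence of $L$ on $x$ gives the required convergence of the $x$-derivatives. Since this has effectively been established when $f(L)$ was defined, what remains is the routine continuity argument above.
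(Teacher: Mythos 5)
Your proposal is correct and follows essentially the same route as the paper: approximate $f$ by real-coefficient polynomials $p_k$, invoke Proposition \ref{prop:inv_form1} for each $p_k(L)$, and pass to the limit in the coordinate expression for $\mathcal N_{(\cdot)}$ using the convergence of first derivatives, which the paper records in the paragraph immediately preceding the Proposition. No gaps.
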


One example of such functions, which is an obvious generalization of the matrix sign function \cite{higham}, is particularly important for us. 

\begin{Ex}  
\label{ex:Bols2}
{\rm Consider the characteristic polynomial $\chi(t)$ of $L$
$$
\chi:M\times \mathbb{R}\to \mathbb{R}, \  \  (x,t)\mapsto  \chi(t):= \det\bigl(t\cdot \Id-L(x)  \bigr),
$$
as a monic polynomial in $t$ of degree $n$ whose coefficients are smooth functions on $M$.

We will say that a factorisation $\chi(t)=\chi_1(t)\cdot \chi_2(t)$  is {\it admissible}, if  $\chi_1, \chi_2:M\times \mathbb{R}\to \mathbb{R} $  are monic polynomials of degree $\ge 1$ with smooth coefficients, which are coprime  at every point $x\in M$ (i.e., have no common roots).

Consider the following two distributions $\mathcal D_1, \mathcal D_2$ on $M$:
\begin{equation}
\label{D_i}  \mathcal D_i=\Ker \chi_i(L)\ , \ \ i=1,2 , 
\end{equation}
where $\Ker$ denotes the kernel. 
It is easy to see that  $\mathcal D_1$ and $\mathcal D_2$ are  transversal $L$-invariant
distributions  of complementary dimensions   on $M$, so that   $T M= \mathcal D_1\oplus \mathcal D_2$.    
This decomposition  defines two natural
projectors $P_1$ and $P_2$ onto the distributions $\mathcal D_1$ and $\mathcal D_2$
respectively.  A simple but important observation is that (locally)  these
projectors can be viewed as analytic functions $P_1=f_1(L)$ and
$P_2=f_2(L)$ satisfying the above assumptions (\ref{i})--(\ref{iiiii}).

Indeed, we take a point $x\in M$ and consider the zeros $\lambda_1,\dots ,\lambda_r$   and   
$\lambda_{r+1},\dots ,\lambda_{n}$ of the polynomials   $\chi_1$ and respectively   $\chi_2$ at the point. 
We take small positive $\varepsilon$ and consider 
$$
K_1:= \bigcup_{i=1}^{r} B(\lambda_i, \varepsilon)\ , \  \  K_2:= \bigcup_{i={r+1}}^{n} B(\lambda_i, \varepsilon)\ , \  \ K:= K_1\cup K_2,
$$  
 where  $B(z, \varepsilon)\subset \mathbb{C}$ denotes a closed ball of radius $\varepsilon$ centered at $z\in \mathbb{C}$. We can choose $\varepsilon$ small enough so that $K_1 \cap K_2 = \emptyset$ and $K$ satisfy the assumptions (\ref{i})  and  (\ref{iii}). If we work in a small neighbourhood of $x$, the assumption (\ref{iiiii}) is evidently fulfilled. 

Now take the function $f_1:K\to \mathbb{C}, \  \ f_1(z):= \left\{\begin{array}{cc} 1 & \textrm{ for $z\in K_1$ } \\ 0 & \textrm{ for $z\in K_2$. } \end{array} \right. $  
 The function  evidently  satisfies (\ref{ii}, \ref{iiii}). Then $f_1(L)$ is well-defined. It is easy to see that $f_1(L)(\xi)= \xi$ for $\xi\in \Ker\chi_1(L)$, and $f_1(L)(\xi)=0$ for $\xi\in \Ker \chi_2(L)$, i.e., $f_1(L)$ is the projector $P_1$.      Similarly, one can construct a function $f_2$ such that $f_2(L)= P_2$. 

Thus,  by Proposition \ref{prop:inv_form1},  ${\mathcal N}_{P_i}=0$, $i=1,2$.  Similarly,  the projector to every generalised eigensubspace of a Nijenhuis operator $L$ is  a Nijenhuis operator  itself (whenever such a projector is well defined). 
}\end{Ex}    

Another important example of an analytic function of a Nijenhuis operator is discussed in Section  \ref{noreal}.

\subsection{Splitting theorem}
\label{splitting}

Consider a point $p\in M$ and let $\lambda_1, \dots, \lambda_n$ be the eigenvalues (possibly complex) of $L(p)$ counted with multiplicities.   Assume that they are divided into two groups $\{\lambda_1,\dots,\lambda_r\}$ and $\{\lambda_{r+1},\dots,\lambda_n\}$ in such a way that equal eigenvalues as well as complex conjugate pairs belong to the same group, i.e., for every $i\in \{1,\dots,r\}$, $j\in \{r+1,\dots,n\}$ we have $\lambda_i\not=\lambda_j$ and $\lambda_i\not=\bar\lambda_j$.  Clearly, at the point $p$ we have  an admissible factorisation of the characteristic polynomial
$\chi_{L(p)} (t) = \chi_1(t)\, \chi_2(t)$ (see Example \ref{ex:Bols2}), where 
$$
\chi_1=(t-\lambda_1)\ldots
(t-\lambda_{r}),\quad   \chi_2=(t-\lambda_{r+1})\ldots
(t-\lambda_{n}).
$$
By continuity, this partition of eigenvalues can be extended to a certain neighbourhood $U(p)$ which leads to an admissible factorisation of the characteristic polynomial $\chi_L(t)$ on $U(p)$ given by the same formula.  The fact that the coefficients of $\chi_1(t)$ and  $\chi_1(t)$ are smooth on $U(p)$ easily follows from the Implicit Function Theorem.  All local admissible factorisations are of this kind. Notice that we do not assume that $p\in M$ is algebraically generic, i.e.,  $p$ is allowed to be singular.  

Following Example \ref{ex:Bols2}, we consider the distributions $\mathcal D_i = \Ker \chi_i(L)$ ($i=1,2$) that provide a natural decomposition of the tangent bundle $TM = \mathcal D_1 \oplus \mathcal D_2$.

\begin{Theorem} \label{thm1}
Let $\chi_L(t)=\chi_1(t)\, \chi_2(t)$ be an admissible factorisation of the characteristic polynomial of a Nijenhuis operator $L$ in a neighbourhood of a point $p\in M$.  Then the distributions $\mathcal D_i = \Ker \chi_i(L)$  are both  integrable.   Moreover, in any adapted coordinate system $(x_1,\ldots, x_r,y_{r+1}, \ldots, y_{n})$ {\rm(}i.e., such that  $\mathcal D_1$ is generated by the vectors $\partial_{x_i}$, and  $\mathcal D_2$ is generated by the vectors $\partial_{y_j}${\rm)} the operator $L$ takes the block-diagonal form
 \begin{equation}
 \label{matl} 
L(x,y)=\begin{pmatrix} L_1(x) & 0 \\  0 & L_2(y)\end{pmatrix},
\end{equation}
where the entries of the $r\times r$ matrix $L_1$ depend on the $x$-variables only, and the entries of the $(n-r)\times (n-r)$ matrix $L_2$ depend on the $y$-variables only. In other words,
$L$ splits into a direct sum of two Nijenhuis operators: $L(x,y) = L_1(x) \oplus L_2(y)$. 
 \end{Theorem}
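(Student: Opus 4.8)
The plan is to split the proof into two essentially independent parts: first, the integrability of $\mathcal D_1,\mathcal D_2$ together with the block-diagonal form of $L$, which follows from facts already established; and second, the separation of variables $L_1=L_1(x)$, $L_2=L_2(y)$, which is the real content and will be squeezed out of the Nijenhuis identity using coprimality of $\chi_1$ and $\chi_2$. Nothing in the argument uses that $p$ is generic, in keeping with the emphasis of the statement.

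First I would recall that $\mathcal D_i=\Ker\chi_i(L)$ are $L$-invariant and, by coprimality of $\chi_1,\chi_2$ at every point, give a pointwise direct sum $TM=\mathcal D_1\oplus\mathcal D_2$ (the primary decomposition of linear algebra). The associated projectors $P_1,P_2=\Id-P_1$ are, by Example \ref{ex:Bols2}, analytic functions of $L$, hence Nijenhuis operators. Since $\mathcal D_i=\Image P_i$ and the image of a Nijenhuis operator is integrable (the argument in the proof of Proposition \ref{prop:image}), both distributions are integrable. Being complementary transversal integrable distributions, they admit a common adapted coordinate system $(x,y)$ in which the leaves of $\mathcal D_1$ are $\{y=\const\}$ and those of $\mathcal D_2$ are $\{x=\const\}$; the $L$-invariance of both distributions then forces the off-diagonal blocks to vanish, so $L=\mathrm{diag}\bigl(L_1(x,y),L_2(x,y)\bigr)$.

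The hard part is to show that $L_2$ does not depend on $x$ (and, symmetrically, $L_1$ not on $y$). For this I would write out the Nijenhuis condition $\mathcal N_L(\partial_{x_i},\partial_{y_\alpha})=0$. Because $[\partial_{x_i},\partial_{y_\alpha}]=0$ and $L$ is block-diagonal, the $\partial_y$-component of this identity collapses, after contraction with an arbitrary $v=v^i\partial_{x_i}\in\mathcal D_1$, to the operator relation
$$
\nabla_{L_1 v}\,L_2 = L_2\,\nabla_v L_2, \qquad \nabla_v := v^i\partial_{x_i}.
$$
Iterating along $v,L_1 v,L_1^2 v,\dots$ gives $\nabla_{L_1^{k} v}L_2=L_2^{\,k}\,\nabla_v L_2$, and since $\chi_1(L)$ annihilates $\mathcal D_1$, so that $\chi_1(L_1)v=0$, summing the monomials of $\chi_1$ yields $\chi_1(L_2)\,\nabla_v L_2=0$. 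Here coprimality is decisive: the spectrum of $L_2$ consists of roots of $\chi_2$, none of which is a root of $\chi_1$, so $\chi_1(L_2)$ is invertible. Hence $\nabla_v L_2=0$ for every $v\in\mathcal D_1$, i.e.\ $L_2=L_2(y)$. The symmetric computation, projecting $\mathcal N_L(\partial_{x_i},\partial_{y_\alpha})$ onto $\partial_x$ and using invertibility of $\chi_2(L_1)$, gives $L_1=L_1(x)$.

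The steps that I expect to require the most care are the clean extraction of the displayed operator identity from the index computation (keeping track of exactly which block derivatives survive), and the justification that $\chi_1(L_2)$ is invertible pointwise throughout $U(p)$: this is precisely where admissibility of the factorisation (coprimality at \emph{every} point) enters, and it is what prevents the two eigenvalue groups from interacting. An alternative, equivalent route to the separation of variables is to invoke Proposition \ref{prop:quotient1}: one checks that $L$ sends $\mathcal D_1$-preserving vector fields to $\mathcal D_1$-preserving ones, so the quotient operator on $M/\mathcal D_1$ is well defined and equals $L_2$, forcing its independence of $x$; but verifying this preservation property reduces to the same Nijenhuis computation, so I would present the direct argument above as the primary line.
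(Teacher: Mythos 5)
Your proof is correct, and the first half (integrability of $\mathcal D_1,\mathcal D_2$ via the projectors $P_i=f_i(L)$ being Nijenhuis, plus $L$-invariance forcing block-diagonality in adapted coordinates) coincides with the paper's. Where you genuinely diverge is the separation of variables. The paper observes that $LP_1$ is again a function of $L$, hence Nijenhuis, applies $\mathcal L_{LP_1u}(LP_1)=LP_1\mathcal L_u(LP_1)$ to $u=\partial_{y_\alpha}\in\Ker P_1$ to get $L_1\,\partial_{y_\alpha}L_1=0$, and then needs the auxiliary normalisation $\det L\ne 0$ (achieved by shifting $L\mapsto L+c\cdot\Id$) so that $L_1$ is invertible and $\partial_{y_\alpha}L_1=0$ follows. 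You instead expand $\mathcal N_L(\partial_{x_i},\partial_{y_\alpha})=0$ directly, obtain the recursion $\nabla_{L_1v}L_2=L_2\nabla_vL_2$, iterate to $\nabla_{L_1^kv}L_2=L_2^k\nabla_vL_2$, and contract with $\chi_1$ — this is legitimate since $\nabla_wL_2$ is $C^\infty$-linear in $w$ and the coefficients of $\chi_1$ are functions — to get $\chi_1(L_2)\nabla_vL_2=0$, killed by invertibility of $\chi_1(L_2)$. Both arguments are sound; the paper's is shorter and recycles the functional calculus of Section 3.1, while yours avoids the shift of $L$ altogether and makes visible exactly where admissibility (coprimality of $\chi_1,\chi_2$ at every point, hence $\det\chi_1(L_2)\ne 0$ throughout $U(p)$) enters, whereas in the paper that same coprimality is consumed earlier, in the very existence of the projectors $P_i$ as analytic functions of $L$. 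The one point to state explicitly if you write this up is that the spectrum of $L_2$ lies among the roots of $\chi_2$ because $\chi_2(L_2)=0$ on $\mathcal D_2=\Ker\chi_2(L)$; with that in place the argument is complete.
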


\begin{proof} 
Consider the projectors $P_1$ and $P_2$ onto the distributions $\mathcal D_1$ and $\mathcal D_2$ 
respectively defined by the decomposition $TM=\mathcal D_1\oplus\mathcal D_2$.   Since $P_i$ is a Nijenhuis operator  (see Example \ref{ex:Bols2}) and 
$\mathcal D_i = \mathrm{Image}\,P_i$,  then the integrability of $\mathcal D_i$ follows from  Proposition \ref{prop:image}.

The integrability of $\mathcal D_1$ and $\mathcal D_2$ is equivalent to the
existence of an adapted local coordinate system $(x,y)$ such that
$\partial_{x_1},\dots, \partial_{x_{r}}$ generate $\mathcal D_1$ and
$\partial_{y_{r+1}},\dots, \partial_{y_{n}}$ generates $\mathcal D_2$. Since these distributions are both $L$-invariant, the operator $L$ in this coordinate system has a
block diagonal form:
$$
L(x,y)=\begin{pmatrix} L_1(x,y) & 0 \\ 0 & L_2(x,y) \end{pmatrix}.
$$

Without loss of generality we may assume that $\det L\ne 0$, otherwise we can (locally)  replace $L$ by $L+ c\cdot {\Id}$  where $c$ is an appropriate constant.

Notice that the operator
$$
L P_1=\begin{pmatrix} L_1(x,y) & 0 \\ 0 & 0
\end{pmatrix},
$$
being a function of $L$,  has zero Nijenhuis tensor.
Thus, for $u=\partial_{y_\alpha}\in D_2=\Ker P_1$ we have
$$
\begin{aligned}
\begin{pmatrix} 0 & 0 \\ 0 & 0 \end{pmatrix}=\mathcal L_{LP_1u}(LP_1) &- LP_1\mathcal L_u(LP_1)= LP_1\mathcal
L_u(LP_1)= \\ 
& \begin{pmatrix} L_1 & 0 \\ 0 & 0
\end{pmatrix}
\begin{pmatrix} \partial_{y_\alpha}L_1 & 0 \\ 0 & 0
\end{pmatrix}=
\begin{pmatrix} L_1\partial_{y_\alpha}L_1 & 0 \\ 0 & 0
\end{pmatrix}
\end{aligned}
$$

Since $L_1$ is non-degenerate we conclude that
$\partial_{y_\alpha}L_1=0$, i.e., $L_1=L_1(x)$. Similarly,
$L_2=L_2(y)$, as needed. \end{proof}

By induction, we come to the following conclusion.

\begin{Theorem}
\label{th:bols3.2}
Assume that the spectrum of a Nijenhuis operator  $L$ at a
point $p\in M$ consists of $k$ real (distinct) eigenvalues $\lambda_1,\dots, \lambda_k$  with multiplicities $\sfm_1,\dots, \sfm_k$  respectively and $s$ pairs of complex (non-real) conjugate eigenvalues $\mu_1,\bar \mu_1, \dots ,\mu_s,\bar \mu_s$ of multiplicities $\sfl_1,\dots, \sfl_s$.  Then in a neighbourhood of $p\in M$ there exists a local coordinate system 
$$
x_1=(x_{1}^1\dots  x_{1}^{\sfm_1}),  \dots  , x_k=(x_{k}^1 \dots  x_{k}^{\sfm_k}),\quad 
u_1=(u_{1}^1 \dots  u_{1}^{2\sfl_1}),  \dots  , u_s=(u_{s}^1 \dots  u_{s}^{2\sfl_s}),
$$
in which $L$ takes the following block-diagonal form
$$
L = 
\begin{pmatrix} 
L_1(x_1)&  & & & & \\
 &\!\!\! \ddots & & &  &\\
 & & \!\! L_k(x_k)& & & \\
 & & & \!\!\!\!\!\! L^{\mathbb C}_1 (u_1)&  & \\
 & & & & \!\!\! \ddots  & \\
 & & & & & \!\! L^{\mathbb C}_s (u_s)
\end{pmatrix}
$$
where each block depends on its own group of variables and is a Nijenhuis operator w.r.t. these variables.   In other words, every Nienhuis operator  $L$  locally  splits into a direct sum of Nijenhuis operators $L=L_1\oplus L_2 \oplus \dots $  each of which at the point $p\in M$  has either a single real eigenvalue  or a single pair of complex eigenvalues.   
\end{Theorem}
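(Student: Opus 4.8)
The plan is to induct on the number $N = k + s$ of spectral \emph{atoms} of $L$ at $p$, where by an atom I mean either a single real eigenvalue $\lambda_i$ (carrying its full multiplicity $\sfm_i$) or a single complex-conjugate pair $\{\mu_j, \bar\mu_j\}$ (carrying its full multiplicity $\sfl_j$). The whole force of Theorem \ref{thm1} is that it lets us peel off one atom at a time, so the statement should follow by a direct iteration. The base case $N = 1$ is vacuous: the operator consists of a single block, which is exactly one of the two permitted types, and there is nothing to split.

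For the inductive step, suppose $N \geq 2$ and single out one atom, say the real eigenvalue $\lambda_1$ (if $L$ has no real eigenvalues at $p$, pick instead one conjugate pair $\{\mu_1, \bar\mu_1\}$). I would partition the eigenvalues of $L(p)$ into two groups, namely $\{\text{the chosen atom}\}$ and $\{\text{everything else}\}$. This partition meets the requirement stated just before Theorem \ref{thm1}: equal eigenvalues stay together (we take the chosen eigenvalue with its full multiplicity) and conjugate pairs are never broken, since every atom — including the one singled out — is a complete real eigenvalue or a complete conjugate pair. Hence at $p$ we obtain an admissible factorisation $\chi_{L(p)}(t) = \chi_1(t)\,\chi_2(t)$ in the sense of Example \ref{ex:Bols2}, where $\chi_1(t)$ is the degree-$\sfm_1$ factor collecting the roots equal to $\lambda_1$ at $p$ (in the complex case, the real polynomial $\bigl((t-\mu_1)(t-\bar\mu_1)\bigr)^{\sfl_1}$ of degree $2\sfl_1$), and $\chi_2(t)$ is the product of the remaining factors. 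As explained before Theorem \ref{thm1}, the Implicit Function Theorem guarantees that the coefficients of $\chi_1$ and $\chi_2$ extend to smooth functions on a neighbourhood $U(p)$ and that their coprimality persists there, so the factorisation stays admissible on all of $U(p)$.

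I would then apply Theorem \ref{thm1} to this factorisation. It produces adapted coordinates in which $L = L_1 \oplus L'$, where $L_1$ depends only on the first group of variables, $L'$ only on the remaining ones, and both are Nijenhuis in their own variables; moreover $L_1(p)$ has exactly the chosen atom as its spectrum, while $L'(p)$ carries the other $N-1$ atoms. Since $L'$ is a genuine Nijenhuis operator on the $(n - \dim \mathcal D_1)$-dimensional manifold parametrised by its own variables, the induction hypothesis applies to it and yields a further coordinate change, involving only $L'$'s variables, that splits $L'$ into single-atom blocks. Because this change leaves the first group of variables untouched and $L_1$ already depends only on those, composing the two coordinate changes gives the claimed block-diagonal form, each block being Nijenhuis in its own group of variables and having a single real eigenvalue or a single conjugate pair at $p$.

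The steps themselves are light; the one place that genuinely needs care — and where essentially all the analytic content sits — is the verification that the chosen partition yields a factorisation admissible on a whole neighbourhood of $p$, not merely at the point itself. This is precisely the assertion that the projectors $P_i = f_i(L)$ of Example \ref{ex:Bols2} are well defined and smooth near $p$, which rests on the spectral gap between the chosen atom and the rest being preserved under small perturbations of $x$. Once this is in place, Theorem \ref{thm1} supplies the splitting into two pieces, and the induction runs to its conclusion without further obstruction.
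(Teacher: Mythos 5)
Your proposal is correct and is essentially the paper's own argument: the paper derives Theorem \ref{th:bols3.2} from Theorem \ref{thm1} with the single sentence ``By induction, we come to the following conclusion,'' and your induction on spectral atoms, peeling off one real eigenvalue or one conjugate pair at a time via an admissible factorisation, is exactly the intended iteration. You have merely written out the details the paper leaves implicit.
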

Notice that within each block, the algebraic structure may vary from point to point as $p$ is not supposed to be algebraically generic.

\subsection{Nijenhuis operators with no real eigenvalues}
\label{noreal}

In this section we study real Nijenhuis operators all of whose eigenvalues are complex, that is, the spectrum of $L$ at every point $p\in M$  
consists of pairs of complex conjugate eigenvalues $\mu_i, \bar \mu_i$,  $\mu_i\notin \R$ or, equivalently, the spectrum of $L$ belongs to $\mathbb C \setminus\R$.  Our goal is show that in this case $M$ can be endowed with a canonical complex structure $J$ with respect to which $L$ is a complex holomorphic Nijenhuis operator, i.e.,  all the entries $l^i_j(z)$ of $L$, as a complex matrix, are holomorphic functions of complex coordinates $z^1, \dots, z^n$ and the  {\it complex Nijenhuis tensor} of $L$ defined in Theorem \ref{thm:comlexcase} (3) vanishes.

Let $L$ be a operator (not necessarily Nijenhuis) on a real manifold $M$ with no real eigenvalues.  This implies that the dimension of $M$ is even since the characteristic polynomial $\chi_L(t)$ has no real roots and therefore must be of even degree.  We first describe the canonical complex structure $J$ associated with $L$ and constructed as an analytic function $f(L)$ (satisfying conditions (\ref{i})--(\ref{iiiii}) from Section \ref{analytic}).  

Consider  the following function on $\mathbb C \setminus \R$:
  $$
  f(z) = \left\{  \begin{array}{rl} 
  \ii \ , & \mbox{if } \mathrm{Im}\, z >0, \\
  - \ii \ , & \mbox{if } \mathrm{Im}\, z <0. 
  \end{array}
  \right.
  $$
This function is locally constant and holomorphic on $\mathbb C \setminus \R$. It is easy to see that all properties (\ref{i})--(\ref{iiiii}) are fulfilled for any symmetric compact subset $K \subset \mathbb C\setminus \R$. 
 
 \begin{Proposition}   
 \label{prop:3.2} 
 Let $L$ be an operator on a real manifold $M$ with no real eigenvalues. Then 
  the operator 
  \begin{equation}
\label{eq:f(L)}
J = f(L)
\end{equation} 
is well defined on $M$ and satisfies the following properties:
\begin{enumerate}
  \item $J$ is  smooth;
  \item $J^2 = -\Id$, i.e. $J$ is an almost complex structure on $M$;
  \item $JL=LJ$, i.e. $L$ is a complex linear operator w.r.t. $J$;
  \item if $L$ is Nijenhuis, then  $J$  is integrable and hence is a complex structure on $M$. 
 \end{enumerate}
\end{Proposition}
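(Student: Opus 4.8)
The plan is to treat parts (1)--(3) as essentially formal consequences of the functional calculus set up in Section \ref{analytic}, and to reduce the substantive part (4) to the Newlander--Nirenberg theorem via Proposition \ref{prop:lem3}.

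First I would check that the hypotheses of the functional calculus apply. Since $L$ has no real eigenvalues, its spectrum at each point is a compact subset of $\mathbb C\setminus\R$, symmetric under conjugation, so one can choose a symmetric compact $K\subset\mathbb C\setminus\R$ with connected complement whose interior contains the spectrum. The function $f$ is locally constant, hence holomorphic on $\mathrm{int}(K)$, and satisfies $f(z)=\bar f(\bar z)$: if $\mathrm{Im}\,z>0$ then $f(z)=\ii$ while $f(\bar z)=-\ii$, so $\bar f(\bar z)=\ii=f(z)$. Thus assumptions (\ref{i})--(\ref{iiiii}) hold and $J=f(L)$ is well defined. As $f(L)$ is constructed as a uniform limit $\lim_k p_k(L)$ of real polynomials and, by the results quoted in Section \ref{analytic}, is analytic in the entries of $L$, property (1) (smoothness) is immediate; and since each $p_k(L)$ commutes with $L$, so does the limit, giving property (3), $JL=LJ$.

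For property (2) I would use multiplicativity of the functional calculus: $f(L)^2=(f^2)(L)$, because $p_k(L)^2=(p_k^2)(L)$ and squaring is continuous under uniform convergence on $K$. Now $f(z)^2\equiv-1$ on all of $\mathbb C\setminus\R$, so $(f^2)(L)=-\Id$, i.e. $J^2=-\Id$, and $J$ is an almost complex structure. Note that parts (1)--(3) do not use the Nijenhuis condition on $L$, consistent with the way the proposition is phrased.

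The real content is property (4). Here I would invoke Proposition \ref{prop:lem3}: since $f$ satisfies (\ref{i})--(\ref{iiiii}) and $L$ is Nijenhuis, the operator $J=f(L)$ is again Nijenhuis, i.e. $\mathcal N_J=0$ in the sense of Definition \ref{def:Nij1}. Combined with $J^2=-\Id$ from part (2), the vanishing of $\mathcal N_J$ is precisely the classical integrability obstruction for an almost complex structure, so the Newlander--Nirenberg theorem yields that $J$ is integrable, hence a genuine complex structure on $M$. The main obstacle is thus conceptual rather than computational: one must recognise that the Nijenhuis tensor of Definition \ref{def:Nij1}, specialised to an operator with $J^2=-\Id$, is literally the torsion tensor appearing in Newlander--Nirenberg, after which the claim follows from the already-established fact that analytic functions of Nijenhuis operators are Nijenhuis. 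A minor point worth verifying is that $J=f(L)$ is genuinely globally defined and independent of the local choice of $K$, which is guaranteed by the independence statement in Section \ref{analytic}.
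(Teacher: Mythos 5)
Your proposal is correct and follows essentially the same route as the paper: well-definedness and smoothness from the functional calculus of Section \ref{analytic}, the purely algebraic identities $f^2(z)\equiv -1$ and commutativity of $L$ with $f(L)$ for parts (2)--(3), and Proposition \ref{prop:lem3} for $\mathcal N_J=0$ in part (4). The only (harmless) difference is that you explicitly name the Newlander--Nirenberg theorem to pass from $\mathcal N_J=0$ to integrability, a step the paper leaves implicit in the word ``integrable.''
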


A very special case of item 4 was discussed in \cite{MatveevNij}.
  
\begin{proof} The fact that $J=f(L)$ is well defined and is smooth  follows from Section \ref{analytic}.
Next, items 2 and 3 are purely algebraic.  Indeed,  the scalar identity $f^2(z)\equiv -1$ implies the matrix identity $J^2 = f(L) f(L) = - \Id$, and $L$ commutes with $f(L)$  for any matrix function $f$. Finally, the integrability of $J$, i.e.,  the fact that ${\mathcal N}_J\equiv 0$, is a particular case of Proposition \ref{prop:lem3}. \end{proof}

  \begin{Theorem}
  \label{thm:comlexcase}
  Let  $L$ be a Nijenhuis operator  on $M$ with no real eigenvalues,  i.e., its spectrum at every point $x\in M$ belongs to $\mathbb C \setminus\R$. Then 
  \begin{enumerate}
  \item $M$ is a complex manifold w.r.t. $J$ canonically associated with $L$ by \eqref{eq:f(L)}. 
  
 \item  $L$ is a complex holomorphic tensor field on $M$ w.r.t. $J$.  In other words, the subspace spanned by $\partial_{z^1}, \dots, \partial_{z^n}$ is  $L$-invariant and the restriction of $L$  to this subspace takes the form
  $$
  L^{\mathbb C} =  \sum_{i,j=1}^nl^i_j(z)\, \ddd z^j \otimes \partial_{z^i}
  $$  
  with all the functions $l^i_j(z)$ being holomorphic.
    
\item  The complex Nijenhuis  tensor of $L$ vanishes, i.e. 
  \begin{equation}
  \label{eq:comlNijrel}
\left( \mathcal N_L^{\mathbb C} \right)^i_{jk} = l^m_j \pd{l^i_k}{z^m} - l^m_k \pd{l^i_j}{z^m} - l^i_m \pd{l^m_k}{z^j} + l^i_m \pd{l^m_j}{z^k} =0.
  \end{equation}
\end{enumerate}
  \end{Theorem}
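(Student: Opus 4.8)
Item 1 is essentially already in hand: by Proposition \ref{prop:3.2} the tensor $J=f(L)$ is a smooth integrable almost complex structure, so by the Newlander--Nirenberg theorem $M$ carries a complex structure for which $J$ is multiplication by $\ii$. I would fix holomorphic coordinates $z^1,\dots,z^n$ and split $TM\otimes\mathbb C=T^{1,0}\oplus T^{0,1}$ into the $(\pm\ii)$-eigenspaces of $J$, spanned by the $\partial_{z^i}$ and the $\partial_{\bar z^i}$. Since $JL=LJ$ (Proposition \ref{prop:3.2}(3)), $L$ preserves this splitting, and I write $L\partial_{z^j}=\sum_i l^i_j\,\partial_{z^i}$, so that $L^{\mathbb C}=(l^i_j)$; because $L$ is real, $L\partial_{\bar z^j}=\sum_i \bar l^i_j\,\partial_{\bar z^i}$. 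The key structural remark, to be used below, is that the eigenvalues of $A:=(l^i_j)$ are exactly the eigenvalues of $L$ in the open upper half-plane, while those of $\bar A=(\bar l^i_j)$ lie in the open lower half-plane.

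The heart of the matter is item 2, the holomorphicity $\partial_{\bar z^k} l^i_j=0$, and this is the step I expect to cost the most care. The plan is to read it off from the vanishing of $\mathcal N_L$ on the mixed pair $(\partial_{\bar z^j},\partial_{z^k})$. Since coordinate fields commute and $L$ preserves the two summands, a routine expansion of $\mathcal N_L(\partial_{\bar z^j},\partial_{z^k})=0$ followed by projection onto $T^{1,0}$ produces
\[
\sum_m \bar l^m_j\,\partial_{\bar z^m} l^i_k \;=\; \sum_p l^i_p\,\partial_{\bar z^j} l^p_k \qquad\text{for all } i,j,k.
\]
Collecting the unknowns $C^{(m)}:=(\partial_{\bar z^m} l^i_k)_{i,k}$, this is the homogeneous linear system $A\,C^{(j)}=\sum_m \bar l^m_j\,C^{(m)}$, i.e. $\bigl(\mathrm{id}\otimes \ell_A-\bar A^{\mathsf T}\otimes\mathrm{id}\bigr)\mathbf C=0$, where $\ell_A$ denotes left multiplication by $A$ on the space of matrices. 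This is a Sylvester-type equation whose coefficient operator has eigenvalues $\alpha-\bar\beta$ with $\alpha,\beta\in\mathrm{spec}(A)$; since $\mathrm{Im}\,\alpha>0$ and $\mathrm{Im}(-\bar\beta)=\mathrm{Im}\,\beta>0$, none of these vanishes, so the operator is invertible and $\mathbf C=0$, which is precisely $\partial_{\bar z^k}l^i_j=0$. This spectral separation across the real axis is exactly the point where the hypothesis ``no real eigenvalues'' enters; the delicate part is the index bookkeeping that turns the projected Nijenhuis identity into the above Kronecker form.

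Finally, item 3 follows by applying the same expansion to the pair $(\partial_{z^j},\partial_{z^k})$ of $(1,0)$-fields. Because $J$ is integrable we have $[T^{1,0},T^{1,0}]\subset T^{1,0}$, so $\mathcal N_L(\partial_{z^j},\partial_{z^k})$ lies entirely in $T^{1,0}$, and expanding it term by term reproduces verbatim the components $(\mathcal N_L^{\mathbb C})^i_{jk}$ of \eqref{eq:comlNijrel}, the derivatives appearing being the Wirtinger operators $\partial_{z^m}$. By item 2 these Wirtinger derivatives coincide with the genuine holomorphic derivatives, so $\mathcal N_L(\partial_{z^j},\partial_{z^k})=0$ is exactly \eqref{eq:comlNijrel}. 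As a cross-check, a more conceptual route to item 2 would note that $L+J=g(L)$ with $g(z)=z+f(z)$ again satisfies conditions (\ref{i})--(\ref{iiiii}), whence $\mathcal N_{L+J}=0$ and the Fr\"olicher--Nijenhuis bracket $[L,J]_{\mathrm{FN}}=\mathcal N_{L+J}-\mathcal N_L-\mathcal N_J$ vanishes; compatibility of $L$ with the complex structure $J$ then forces holomorphicity. I would keep the direct Sylvester computation as the main argument, since it is self-contained and makes transparent where the absence of real eigenvalues is used.
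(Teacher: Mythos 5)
Your proposal is correct and follows essentially the same route as the paper: item 1 via Proposition \ref{prop:3.2}, item 2 by evaluating $\mathcal N_L$ on a mixed pair $(\partial_{z^i},\partial_{\bar z^j})$, recognising the resulting identity as a Sylvester-type equation whose coefficient operator is invertible because the spectra of $L^{\mathbb C}$ and $\overline{L^{\mathbb C}}$ are separated by the real axis, and item 3 by noting that \eqref{eq:comlNijrel} is just the $(\partial_{z^j},\partial_{z^k})$ part of the real Nijenhuis relations. The only cosmetic difference is that you assemble the unknown derivatives into one Kronecker system $\bigl(\mathrm{id}\otimes\ell_A-\bar A^{\mathsf T}\otimes\mathrm{id}\bigr)\mathbf C=0$, whereas the paper fixes one index and solves $L^{\mathbb C}X-X\overline{L^{\mathbb C}}=0$ for each value of that index; the spectral argument is identical.
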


\begin{proof}
The first statement follows from Proposition \ref{prop:3.2}. To prove statements 2 and 3, we will use complex coordinates $z^1, \dots, z^n$ associated with $J$.  

At each point $p\in M$  we consider $L$ as a linear operator $L: T_p M \to T_p M$. In the basis  $\partial_{z^i}, \partial_{\bar z^i}$ ($i=1,\dots, n$) every operator can be written in the form\footnote{Speaking more formally, in this formula we think of $L$ as an operator acting on the complexified tangent space, i.e,  $2n$-dimensional vector space over $\mathbb C$ with basis $\partial_{z^1}, \dots, \partial_{z^n}, \, \partial_{\bar z^1},\dots , \partial_{\bar z^n}$.}
$$
L  = l^i_j (p) \, \partial_{z^i}\otimes \ddd z^j +  m^i_j (p)\, \partial_{\bar z^i}\otimes \ddd z^j +  \bar m^i_j (p)\, \partial_{z^i}\otimes \ddd \bar z^j +  \bar l^i_j (p) \, \partial_{\bar z^i}\otimes \ddd \bar z^j
$$
with some complex-valued functions $l^i_j(p), m^i_j(p)$ and summation over $i$ and $j$ assumed. By Proposition \ref{prop:3.2},  $L$ commutes with $J$, i.e., is a $J$-linear.  In terms of the above decomposition this means that $m^i_j(p)\equiv 0$. If we now think of $L$ as a complex linear map acting on   $\mathrm{span}(\partial_{z^1}, \dots, \partial_{z^n})$,  we may associate it with the $n\times n$ complex matrix $L^{\mathbb C}=\bigl( l^i_j (p)   \bigr)$.
Our goal is to show that its entries $l^i_j(p)=l^i_j(z^1,\dots, x^n)$ are holomorphic in coordinates $z^1, \dots, z^n$.

\begin{Lemma}
Let $L$ be a field of complex endomorphisms on a complex manifold $(M, J)$. Assume that the Nijenhuis tensor of $L$  vanishes in the real sense, $L$ has no real eigenvalues and, in addition, the imaginary part of each eigenvalues is positive.  Then $L$ is holomorphic and its complex Nijenhuis tensor vanishes.
\end{Lemma}

\begin{proof}
As explained above, if we think of $L$ as a real operator acting on the $2n$-dimensional tangent space, then in the basis
$\partial_{z^1},\dots,\partial_{z^n}, \partial_{\bar z^1},\dots,\partial_{\bar z^n}$  we have
$L  = l^i_j (p) \, \partial_{z^i}\otimes \ddd z^j  +  \bar l^i_j (p) \, \partial_{\bar z^i}\otimes \ddd \bar z^j$.  Since the real Nijenhuis tensor $\mathcal N_L$ vanishes, we have:
$$
L^2[\partial_{z^i}, \partial_{\bar z^j}] - L [ L\partial_{z^i}, \partial_{\bar z^j} ] -
 L [ \partial_{z^i}, L\partial_{\bar z^j} ] +   [ L\partial_{z^i}, L\partial_{\bar z^j} ] =0.
$$

Computing formally the left hand side and collecting the terms containing  $\partial_{z^\beta}$, we obtain the following relation:
$$
\left( l_\alpha^\beta \frac{\partial l^\alpha_i}{\partial \bar z^j}  - \bar l^\alpha_j \frac{\partial l^\beta_i}{\partial \bar z^\alpha}\right) \partial_{z^\beta} =0.
$$

Fixing the index $i$, we can treat it as a matrix identity of the form
$$
L^{\mathbb C} X - X\overline { L^{\mathbb C} }=0,
$$
where $X^\alpha_j = \frac{\partial l^\alpha_i}{\partial \bar z^j}$ and $(L^{\mathbb C})^\beta_\alpha=  l^\beta_\alpha$.
Equations of this kind   (i.e., $AX-XB=0$ with given $A$ and $B$ and unknown $X$) are well known in Linear Algebra.  
One of the main properties of such equations is as follows: if $A$ and $B$ have no common eigenvalues then $X=0$.

In our case, the eigenvalues of  $L^{\mathbb C}$ and $\overline { L^{\mathbb C} }$ are complex conjugate. Moreover, according to our assumptions,  the eigenvalues of  $L^{\mathbb C}$  lie in the upper half plane, whereas those of $\overline { L^{\mathbb C} }$ in the lower half plane. In particular, $L^{\mathbb C}$ and $\overline { L^{\mathbb C} }$ have no common eigenvalues. Hence $X=0$, that is,  $\frac{\partial l^\alpha_i}{\partial \bar z^j}=0$, meaning that $L$ is holomorphic.

The condition ${{\mathcal N}_L}^{\mathbb C}=0$ given by \eqref{eq:comlNijrel} is equivalent to the relation
$$
L^2[\partial_{z^i}, \partial_{z^j}] - L [ L\partial_{z^i}, \partial_{z^j} ] -
 L [ \partial_{z^i}, L\partial_{z^j} ] +   [ L\partial_{z^i}, L\partial_{z^j} ] =0, \quad i,j=1\dots,n,
$$
which is simply a part of the real Nijenhuis relations.  \end{proof}

To complete the proof of Theorem \ref{thm:comlexcase} it remains to notice that positivity of the imaginary part of each eigenvalue of $L^\mathbb C$ is a corollary of formula \eqref{eq:f(L)} that defines $J$.  \end{proof}

\begin{Remark} 
\label{rem:fromrealtocompl}
{\rm
Theorem  \ref{thm:comlexcase} basically explains how we should treat Nijenhuis operators with complex eigenvalues (in dimension $2n$). First,  we need to introduce complex coordinates and next we may deal with such operators just in the same way as we would do in the case of real eigenvalues (in dimension $n$) having in mind that all the objects are now not just smooth, but holomorphic. For instance, if we can reduce a Nijenhuis operator to a certain canonical form in the case of real eigenvalues, then replacing real coordinates $x_1,\dots, x_n$ with complex coordinates $z_1,\dots, z_n$ we will get automatically a canonical form in the complex case provided all the steps of the reduction procedure admit natural complex analogs (which is almost always the case).  We will apply this principle below in Remark \ref{rem:canformgeneral} to get a canonical form for a complex Jordan block.
}\end{Remark}


\section{ Canonical forms for Nijenhuis operators}\label{sect:4}

\subsection{Semisimple case}

It is a well known fact that pointwise diagonalisable Nijenhuis operators can be diagonalised in a neighbourhood of a generic point. 
Let us emphasise that some kind of {\it genericity} assumption is essential  (see Example \ref{ex:4.1} below). First we recall two classical  diagonalisability theorems in a neighbourhood of a $\mathrm{gl}$-regular or algebraically generic point (see Definitions \ref{def:alggen} and \ref{def:algregular}).  One usually assumes that $L$ is diagonalisable over $\R$  (see \cite{haant}, \cite{nij}),  in our version complex eigenvalues are also allowed.

\begin{Theorem}\label{thm:part1:5}
Let $L$ be a Nijenhuis operator. Assume that L is semisimple and $\mathrm{gl}$-regular at a point $p\in M$, in other words $L(p)$  has $n=\dim M$ distinct eigenvalues (real or complex) 
$$
\lambda_1, \dots, \lambda_s \in \R \quad\mbox{and}\quad  \rho_1,\bar\rho_1, \dots, \rho_m,\bar\rho_m \in\mathbb C,  \quad s+2m=n.
$$  
Then there exists a local coordinate system 
$$
u_1,\dots , u_s, \ x_1, y_1, \dots , x_m, y_m \qquad (\mbox{we set $z_\alpha=x_\alpha + \iii y_\alpha$})
$$ 
in which  $L$ takes the following block-diagonal form with $1\times 1$ blocks corresponding to real eigenvalues and $2\times 2$ blocks corresponding to pairs of complex eigenvalues:
$$
L=\mathrm{diag} \Bigl( \lambda_1(u_1), \dots, \lambda_s(u_s),   R_1(z_1) , \dots , R_m(z_m) \Bigr) \quad\mbox{with }
R_\beta(z_\beta)  = \begin{pmatrix}
a_\beta(z_\beta) & \!\!\! - b_\beta(z_\beta) \\
b_\beta(z_\beta) & \! a_\beta(z_\beta)
\end{pmatrix},
$$
where $\lambda_\alpha(u_\alpha)$ is smooth   
and $\rho(z_\beta)=a_\beta(z_\beta) + \iii b_\beta(z_\beta)$ is a holomorphic function in  $z_\beta$.  
\end{Theorem}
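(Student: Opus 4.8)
The plan is to obtain this theorem as an essentially immediate consequence of the splitting results of Section \ref{sect:3} combined with the analysis of operators with no real eigenvalues. First I would invoke the splitting theorem (Theorem \ref{th:bols3.2}): since $L$ is Nijenhuis, in a neighbourhood of $p$ it splits into a direct sum of Nijenhuis operators, each of which at $p$ has either a single real eigenvalue or a single pair of complex conjugate eigenvalues. At this stage the blocks may a priori have arbitrary size, but the standing hypotheses of semisimplicity and $\mathrm{gl}$-regularity (Definition \ref{def:algregular}) force every real eigenvalue to be simple and every complex pair to have multiplicity one. Hence each real block is $1\times 1$ and each complex block is $2\times 2$. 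Moreover, distinctness of the $n$ eigenvalues is an open condition, so $L$ stays semisimple with the same partition of its spectrum throughout a (possibly smaller) neighbourhood of $p$, and in particular the complex pairs stay off the real axis there; thus every block keeps its single-eigenvalue character.

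The real $1\times 1$ blocks require no further argument: by the splitting theorem such a block is a scalar operator $L_\alpha = \lambda_\alpha(u_\alpha)$ depending only on its own coordinate $u_\alpha$, which is exactly the stated form.

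The substantive part concerns the $2\times 2$ complex blocks. For a fixed such block I would pass to the restriction of $L$ to the corresponding two-dimensional leaf; this restriction is again Nijenhuis (Proposition \ref{prop:quotient1}) and, having only the pair $\rho_\beta, \bar\rho_\beta \notin \R$ in its spectrum, has no real eigenvalues. I can therefore apply Theorem \ref{thm:comlexcase}: the leaf acquires the canonical complex structure $J = f(L)$, a complex coordinate $z_\beta = x_\beta + \iii y_\beta$ adapted to $J$ exists, and $L$ becomes a holomorphic complex-linear tensor. In complex dimension one a holomorphic field of endomorphisms is simply multiplication by a single holomorphic function $\rho_\beta(z_\beta) = a_\beta(z_\beta) + \iii\, b_\beta(z_\beta)$. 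Rewriting this $J$-linear multiplication operator in the real basis $\partial_{x_\beta}, \partial_{y_\beta}$ produces precisely the matrix $R_\beta$, and holomorphicity of $\rho_\beta$ is exactly the Cauchy--Riemann relation between $a_\beta$ and $b_\beta$, as required. Collecting the blocks in their independent groups of variables then yields the asserted block-diagonal normal form.

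The main obstacle, I expect, is not any single computation but the clean reduction to the two-dimensional complex case. One must check that Theorem \ref{thm:comlexcase} is genuinely applicable block by block, i.e. that each restricted operator really has no real eigenvalues near $p$ and inherits the Nijenhuis property, and that the complex coordinate it furnishes on the leaf is compatible with the real $(x_\beta,y_\beta)$ description, so that the resulting $2\times 2$ block equals $R_\beta$ on the nose rather than being merely conjugate to a rotation-scaling matrix. Since the variable groups belonging to different blocks are disjoint, no compatibility question arises across blocks, and the only care needed is within each individual $2\times 2$ block.
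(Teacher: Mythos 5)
Your proposal is correct and follows essentially the same route as the paper: the authors also derive Theorem \ref{thm:part1:5} as a particular case of the splitting theorem (Theorem \ref{th:bols3.2}), choosing complex coordinates for each $2\times 2$ block via Theorem \ref{thm:comlexcase}. Your write-up merely fills in the details (why $\mathrm{gl}$-regularity and semisimplicity force the blocks to be $1\times 1$ or $2\times 2$, and why a one-dimensional holomorphic endomorphism field is multiplication by a holomorphic function) that the paper leaves implicit in its one-line proof.
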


\begin{proof}
This statement is a particular case of Theorem \ref{th:bols3.2}   (for each complex $2\times 2$-block,  we choose complex coordinates following Theorem \ref{thm:comlexcase}).   
\end{proof}

This theorem can be easily generailsed to the semisimple case with multiple eigenvalues under additional assumption that $L$ is algebraically generic, i.e., multipliciteis of eigenvalues are locally constant. 

\begin{Theorem}\label{thm:part1:6}
Let $L$ be a semisimple Nijenhuis operator.  Assume that $L$ is algebraically generic at a point $p\in M$ and its real eigenvalues $\lambda_1, \dots, \lambda_s \in \R
$ have multiplicities $k_1, \dots, k_s$ and pairs of complex eigenvalues 
$\rho_1,\bar\rho_1, \dots, \rho_m,\bar\rho_m \in\mathbb C$  have multiplicities $l_1,\dots, l_m$ respectively. 
Then there exists a local coordinate system
$$
\underbrace{u_1^1,\dots u_1^{k_1}}_{u_1}, \dots , \underbrace {u_s^1,\dots, u_s^{k_s}}_{u_s}, \ \
\underbrace {z_1^1,\dots, z_1^{l_1}}_{z_1}, \dots , \underbrace {z_m^1,\dots, z_m^{l_m}}_{z_m},  \quad\mbox{where } z^\alpha_\beta=
x^\alpha_\beta + \iii y^\alpha_\beta,
$$ 
in which  $L$ takes the following block-diagonal form: 
$$
L=\mathrm{diag} \Bigl( \Lambda_1(u_1), \dots, \Lambda_s(u_s),  \ \mathcal R_1(z_1) , \dots , \mathcal R_m(z_m) \Bigr) 
$$
where  each block corresponds to one of real eigenvalues or one of pairs of complex eigenvalues, namely, 
$$
\Lambda_\alpha(u_\alpha) =\mathrm{diag} \Bigl( \underbrace{\lambda_\alpha(u_\alpha),\dots,  \lambda_\alpha(u_\alpha)}_{k_\alpha  \ \mathrm{times}}\Bigr),  \qquad
\mathcal R_\beta(z_\beta) =\mathrm{diag} \Bigl( \underbrace{R_\beta(z_\beta),\dots,  R_\beta(z_\beta)}_{l_\beta  \ \mathrm{times}}\Bigr),
$$
with 
$$
R_\beta(z_\beta)  = \begin{pmatrix}
a_\beta(z_\beta) & \!\!\! - b_\beta(z_\beta) \\
b_\beta(z_\beta) & \! a_\beta(z_\beta)
\end{pmatrix},
$$  

where $\lambda_\alpha(u_\alpha)$ is a smooth function of $k_\alpha$ coordinates $u_\alpha = (u^1_\alpha, \dots ,u^{k_\alpha}_\alpha)$ 
and $\rho(z_\beta)=a_\beta(z_\beta) + \iii b_\beta(z_\beta)$ is a holomorphic function of $l_\beta$ complex variables   $z_\beta=(z_\beta^1,\dots, z_\beta^{l_\beta})$.  
\end{Theorem}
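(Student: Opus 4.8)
The plan is to reduce everything to the splitting theorem (Theorem \ref{th:bols3.2}) and then exploit the observation that a semisimple operator with a single eigenvalue of locally constant multiplicity must be scalar. First I would apply Theorem \ref{th:bols3.2} at the point $p$: since $L(p)$ has real eigenvalues $\lambda_1,\dots,\lambda_s$ of multiplicities $k_1,\dots,k_s$ and complex-conjugate pairs $\rho_\beta,\bar\rho_\beta$ of multiplicities $l_1,\dots,l_m$, the theorem produces an adapted coordinate system in which $L$ splits as a direct sum of real blocks $L_\alpha(u_\alpha)$ and complex blocks $L^{\mathbb C}_\beta(z_\beta)$, each depending only on its own group of variables, Nijenhuis with respect to them, and carrying at $p$ either a single real eigenvalue $\lambda_\alpha$ or a single conjugate pair $\rho_\beta,\bar\rho_\beta$.

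Next I would treat the real blocks. Each $L_\alpha$ is the restriction of $L$ to an $L$-invariant distribution, hence semisimple, using the standard linear-algebra fact that the restriction of a semisimple endomorphism to an invariant subspace is again semisimple. By algebraic genericity the number of distinct eigenvalues of $L_\alpha$ stays constant, so $L_\alpha(x)$ retains its single eigenvalue $\lambda_\alpha(x)$ throughout a neighbourhood of $p$; a semisimple endomorphism with a single eigenvalue is scalar, whence $L_\alpha = \lambda_\alpha\cdot\Id$. Since $L_\alpha$ depends only on the group $u_\alpha$, we get $\lambda_\alpha = \lambda_\alpha(u_\alpha)$, and $\lambda_\alpha$ is otherwise unconstrained: a direct computation from Definition \ref{def:Nij1} shows that $\lambda\cdot\Id$ is Nijenhuis for \emph{every} smooth $\lambda$, since the two $\ddd\lambda$-terms cancel. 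This gives the blocks $\Lambda_\alpha(u_\alpha)$.

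For the complex blocks I would invoke Theorem \ref{thm:comlexcase} together with the translation principle of Remark \ref{rem:fromrealtocompl}. As $L^{\mathbb C}_\beta$ has no real eigenvalues, the canonically associated complex structure $J$ turns the real $2l_\beta$-dimensional block into a holomorphic Nijenhuis operator on an $l_\beta$-dimensional complex manifold with coordinates $z_\beta=(z_\beta^1,\dots,z_\beta^{l_\beta})$. The complexified operator is again semisimple over $\mathbb C$ with the single eigenvalue $\rho_\beta$ of constant multiplicity, hence scalar, $L^{\mathbb C}_\beta = \rho_\beta(z_\beta)\cdot\Id$, with $\rho_\beta$ holomorphic by Theorem \ref{thm:comlexcase}. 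Writing $\rho_\beta = a_\beta + \iii b_\beta$ and $z^\alpha_\beta = x^\alpha_\beta + \iii y^\alpha_\beta$ and expressing multiplication by the complex scalar $\rho_\beta$ in the real basis $\partial_{x^\alpha_\beta},\partial_{y^\alpha_\beta}$ produces exactly the $2\times 2$ blocks $R_\beta$, assembled into $\mathcal R_\beta(z_\beta)$.

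The genuinely delicate points, rather than deep obstacles, are two. First, one must record that the splitting is compatible with semisimplicity, i.e. that restricting $L$ to each $\Ker\chi_i(L)$ preserves diagonalisability; this is elementary but needs to be stated. Second, the complex case requires checking that Remark \ref{rem:fromrealtocompl} really transports the scalar normal form from the real to the holomorphic setting, which rests on the positivity of the imaginary parts of the eigenvalues used in the proof of Theorem \ref{thm:comlexcase}. Once these are in place, the real case is essentially immediate and the statement follows by assembling the blocks; I expect the complex bookkeeping, not the underlying geometry, to be the only place demanding care.
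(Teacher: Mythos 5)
Your proposal is correct and follows exactly the route the paper intends: the paper gives no explicit proof of this theorem, merely noting that it is the "easy generalisation" of Theorem \ref{thm:part1:5}, whose proof is the splitting theorem (Theorem \ref{th:bols3.2}) combined with Theorem \ref{thm:comlexcase} for the complex blocks. Your write-up supplies precisely the omitted details — that algebraic genericity keeps each block single-eigenvalued, that a semisimple block with one eigenvalue is scalar, and that the complex blocks become holomorphic scalars — so there is nothing to add.
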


In the next theorem we do not assume that $L$ is $\mathrm{gl}$-regular or algebraically generic at a point $p\in M$.  Instead we suppose that the eigenvalues  of $L$, both real and complex,  are all smooth and functionally independent in a neighbourhood of $p\in M$. Notice that here we do not assume that the eigenvalues are all distinct at the point $p$, it may well happen that all of them vanish at $p$ simultaneously so that $p$ is a singular point of scalar type (Definition \ref{def:scalar}).

\begin{Theorem}
\label{thm:4.3}
Assume that in a neighbourhood of a point $p\in M^n$, the eigenvalues of a Nijenhuis operator $L$
$$
\lambda_1, \dots, \lambda_s \in \R \quad\mbox{and}\quad  \rho_1= \alpha_1 \pm  \ii \beta_1   , \dots,   \rho_m= \alpha_m \pm  \ii \beta_m   \in \mathbb C,  \quad n=s + 2m,
$$
are smooth and functionally independent functions (the latter condition means that the differentials of the $n$ functions 
$\lambda_1,\dots, \lambda_s, a_1, b_1, \dots, a_m, b_m$ are linearly independent at the point $p\in M$).  Then we can introduce a local coordinate system $u_1,\dots, u_s, x_1, y_1, \dots , x_m, y_m$  by setting $u_k=\lambda_k$, $x_j=\alpha_j$ and $y_j=\beta_j$ and in this coordinate system the operator $L$ takes the following form:
\begin{equation}
\label{eq:diagL1}
L=\mathrm{diag}\Bigl(  u_1, \dots, u_s , R_1, \dots, R_m \Bigr)\quad \mbox{with } R_j = \begin{pmatrix} x_j & \!\!\! - y_j\\ y_j & x_j \end{pmatrix}.
\end{equation}
\end{Theorem}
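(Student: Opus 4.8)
The plan is to reconstruct $L$ from the way its \emph{dual} $L^*$ acts on the differentials of the eigenvalues, which by hypothesis form a coframe. Since $\lambda_1,\dots,\lambda_s,\alpha_1,\beta_1,\dots,\alpha_m,\beta_m$ are functionally independent at $p$, their differentials are linearly independent there; so, after possibly shrinking the neighbourhood, the functions $u_k=\lambda_k$, $x_j=\alpha_j$, $y_j=\beta_j$ form a local coordinate system and $\{\ddd u_k,\ddd x_j,\ddd y_j\}$ is a basis of $T_q^*M$ at every nearby $q$. Because $\{\partial_{u_k},\partial_{x_j},\partial_{y_j}\}$ is exactly the dual frame, it suffices to compute $L^*\ddd u_k$, $L^*\ddd x_j$ and $L^*\ddd y_j$: the matrix of $L$ in the coordinates $(u,x,y)$ is then the transpose of the matrix of $L^*$ written in this coframe. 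For the real eigenvalues this is immediate, since each $u_k=\lambda_k$ is a smooth eigenvalue, so Proposition \ref{prop:dlambda} applies verbatim and \eqref{eq:dlambda} gives $L^*\ddd u_k=u_k\,\ddd u_k$; thus $\ddd u_k$ is an eigencovector and contributes a $1\times1$ block $[u_k]$.

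For the complex eigenvalues I would pass to the complexified operator $L^{\mathbb C}$ on $T^{\mathbb C}M$, for which $\rho_j=x_j+\ii y_j$ is a smooth complex-valued eigenvalue, i.e. $\det(L^{\mathbb C}-\rho_j\Id)\equiv 0$. The argument of Proposition \ref{prop:dlambda} is purely algebraic and carries over to the complexification (factor the determinant using the local smooth root $\rho_j$, use the dense set of spectrally generic points, and conclude by continuity), yielding $(L^{\mathbb C})^*\ddd\rho_j=\rho_j\,\ddd\rho_j$. Writing $\ddd\rho_j=\ddd x_j+\ii\,\ddd y_j$ and separating real and imaginary parts produces
\[
L^*\ddd x_j=x_j\,\ddd x_j-y_j\,\ddd y_j,\qquad L^*\ddd y_j=y_j\,\ddd x_j+x_j\,\ddd y_j ,
\]
so each conjugate pair contributes a $2\times2$ block $\left(\begin{smallmatrix} x_j & y_j\\ -y_j & x_j\end{smallmatrix}\right)$ to the matrix of $L^*$. (Note that here one cannot first peel off the complex part via Theorem \ref{th:bols3.2} and then apply Theorem \ref{thm:comlexcase}, since splitting requires the eigenvalues to be distinct at $p$, which is explicitly not assumed; complexifying Proposition \ref{prop:dlambda} is the tool that survives eigenvalue collisions.)

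Assembling these computations, the matrix of $L^*$ in the coframe $\{\ddd u_k,\ddd x_j,\ddd y_j\}$ is block-diagonal, and transposing it — which preserves the block structure and turns the above $2\times2$ block into $R_j=\left(\begin{smallmatrix} x_j & -y_j\\ y_j & x_j\end{smallmatrix}\right)$ — gives exactly the claimed form \eqref{eq:diagL1} for $L$. I would emphasise that this argument makes no genericity assumption on $p$: even when all eigenvalues collide at $p$, the eigencovector relations still hold there by continuity, and since the $n$ differentials remain independent they completely pin down $L(p)$ as the semisimple block-diagonal operator, so no Jordan block can appear.

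The main obstacle is precisely the complex case: justifying that Proposition \ref{prop:dlambda} survives complexification and that the relation $(L^{\mathbb C})^*\ddd\rho_j=\rho_j\,\ddd\rho_j$ persists at the possibly singular point $p$, together with the elementary but essential bookkeeping that recovers $L$ from $L^*$ by transposition. Once these are in place, the diagonalisation itself is immediate.
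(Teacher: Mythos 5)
Your proof is correct, but it takes a genuinely different route from the paper's. The paper's own proof is a two-line appeal to its uniqueness principle: the right-hand side of \eqref{eq:diagL1} is itself a Nijenhuis operator whose characteristic polynomial coincides with $\chi_L(t)$, and since the coefficients $\sigma_1,\dots,\sigma_n$ of that polynomial are functionally independent almost everywhere (eigenvalue collisions and $\beta_j=0$ occur only on hypersurfaces, by the independence of the eigenvalue differentials), Corollary \ref{cor:Bols1} forces $L$ to equal the model. You instead reconstruct $L$ pointwise by computing its dual on the coframe $\{\ddd u_k,\ddd x_j,\ddd y_j\}$ via Proposition \ref{prop:dlambda} and its complexification; the bookkeeping ($L^*\ddd\rho_j=\rho_j\,\ddd\rho_j$ unpacking to the block $R_j$ after transposition) is right, and your observation that one cannot reduce to Theorems \ref{th:bols3.2} and \ref{thm:comlexcase} when eigenvalues collide at $p$ is a correct and relevant point. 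The trade-off: the paper's argument costs nothing beyond machinery it has already built, whereas yours requires one genuine extra step --- the paper's proof of Proposition \ref{prop:dlambda} chooses a real coordinate with $x_n=\lambda$, which does not literally carry over to a complex-valued $\rho_j$. The cleanest repair is not to redo that proof but to evaluate identity \eqref{eq10} at $t=\rho_j(x)$: since $\chi(\rho_j(x),x)\equiv 0$, the chain rule gives $\ddd_x\chi|_{t=\rho_j}=-\partial_t\chi(\rho_j)\,\ddd\rho_j$, and \eqref{eq10} then yields $\partial_t\chi(\rho_j)\cdot\bigl(L^*-\rho_j\bigr)\ddd\rho_j=0$, whence the eigencovector relation at simple roots and, by continuity, everywhere. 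With that in place your argument is complete, and it has the merit of making visible why $L(p)$ is forced to be semisimple even when all eigenvalues coincide at $p$ --- something the paper's proof delivers only implicitly through the uniqueness corollary.
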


\begin{proof}
The right hand side of \eqref{eq:diagL1}  defines a Nijenhuis operator whose characteristic polynomial coincides with that of $L$ and the coefficients of this polynomial are functionally independent almost everywhere. Hence, the statement follows from Corollary \ref{cor:Bols1}. 
\end{proof}

\begin{Ex}
\label{ex:4.1}
{\rm
Notice that functional independence of eigenvalues in Theorem \ref{thm:4.3}   (as well as algebraic regularity and algebraic genericity assumptions in Theorems \ref{thm:part1:5} and \ref{thm:part1:6})  is essential even if we assume that $L$ is semisimple.     Here is one of the simplest examples:  
$$
L=\begin{pmatrix}    x^2 &  xy \\ xy & y^2 \end{pmatrix}.
$$    
The eigenvalues of $L$ are $\lambda_1 = 0$ and $\lambda_2=x^2 + y^2$,  and $L$ in Nijenhuis. However $L$ cannot be diagonalised in a neighbourhood of $(0,0)$.
}\end{Ex}

\subsection{Differentially non-degenerate Nijenhuis operators}

The next two theorems give canonical forms under a rather different condition that $L$ is differentially non-degenerate (Definition \ref{def:nondeg}).

\begin{Theorem}\label{thm:part1:7} Let $L$ be a Nienhuis operator on $M^n$. Assume that $L$ is differentially non-degenerate at a point $p\in M$, i.e., the differentials $\ddd\sigma_1,\dots,\ddd\sigma_n$ of the coefficients of the characterteristic polynomial of $L$ are linearly independent at $p$. Then there exists a local coordinate system $x_1,\dots, x_n$ in which $L$ takes the following canonical form:
\begin{equation}\label{eq:14}
L = \begin{pmatrix}
x_1 & 1 &  & & \\
x_2 & 0 & 1 & & \\
\vdots & \vdots & \ddots & \ddots & \\
x_{n-1} &  0 & \dots & 0 & 1\\
x_n & 0  & \dots & 0 & 0 
\end{pmatrix}
\end{equation}
\end{Theorem}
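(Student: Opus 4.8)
The plan is to deduce the statement almost immediately from the companion-matrix relation established in Corollary \ref{cor:Bols6}, once a suitable coordinate system is chosen. The key observation is that the target form \eqref{eq:14} is precisely the companion matrix $S_\chi$ appearing in \eqref{eq:Lexplicit}, under the identification $x_i = -\sigma_i$. Thus the whole theorem should reduce to picking coordinates in which the Jacobi matrix $J$ becomes trivial.

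First I would use the differential non-degeneracy hypothesis: since $\ddd\sigma_1, \dots, \ddd\sigma_n$ are linearly independent at $p$, so are $\ddd(-\sigma_1), \dots, \ddd(-\sigma_n)$, and by the Inverse Function Theorem the functions $x_i := -\sigma_i$ form a local coordinate system in a neighbourhood of $p$. This is the only place where the hypothesis enters, and it is what guarantees that we may legitimately adopt the coefficients $\sigma_i$ (up to sign) as coordinates.

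Next I would compute the Jacobi matrix $J$ of $(\sigma_1, \dots, \sigma_n)$ with respect to these new coordinates $(x_1, \dots, x_n)$. Since $\sigma_i = -x_i$, we have $J^i_j = \partial \sigma_i / \partial x_j = -\delta^i_j$, i.e. $J = -\Id$, which is invertible. Substituting into the relation $J(x)\,L(x) = S_\chi(x)\,J(x)$ from Corollary \ref{cor:Bols6} immediately yields $L = S_\chi$. Finally, writing out $S_\chi$ with $\sigma_i = -x_i$ replaces each entry $-\sigma_i$ in its first column by $x_i$, producing exactly the matrix \eqref{eq:14}.

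In this approach there is essentially no hard step: the genuine content is already packaged in Corollary \ref{cor:Bols6} (equivalently in \eqref{eq10}), which encodes how the Nijenhuis condition forces $L$ to be conjugate to the companion matrix of its own characteristic polynomial via the Jacobian of the coefficient functions. The only point requiring minor care is the bookkeeping of signs, ensuring that the choice $x_i = -\sigma_i$ (rather than $x_i = \sigma_i$) produces the $+x_i$ convention displayed in \eqref{eq:14}. An alternative, equally short route would be to observe that the right-hand side of \eqref{eq:14} defines a Nijenhuis operator whose characteristic polynomial has functionally independent coefficients $\sigma_i = -x_i$, and then invoke the uniqueness statement of Corollary \ref{cor:Bols1}; but the direct computation via the Jacobian is cleaner, as it avoids verifying the Nijenhuis property of the model matrix separately.
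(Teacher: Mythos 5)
Your proposal is correct and is essentially the paper's own argument: the paper sets $x_i=-\sigma_i$ and then either invokes the uniqueness statement of Corollary \ref{cor:Bols1} or, as it explicitly notes as an alternative, computes $L$ directly from \eqref{eq:Lexplicit} — which is exactly your Jacobian computation $J=-\Id$, $L=S_\chi$. The sign bookkeeping and the use of differential non-degeneracy to justify the coordinate change are both handled correctly.
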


\begin{proof}
Since $\sigma_1,\dots,\sigma_n$ are functionally independent, we can set $x_i = -\sigma_i$.  Then the right hand side of \eqref{eq:14} defines a Nijenhuis operator whose characteristic polynomial coincides with $\chi_L(t)$ at each point.    After this it remains to apply Corollary \ref{cor:Bols1}.
Alternatively, one can use \eqref{eq:Lexplicit} to ``compute'' $L$ in coordinates $x_1,\dots, x_n$.
\end{proof}

Notice that any operator given by \eqref{eq:14} is automatically $\mathrm{gl}$-regular in the sense of Definition \ref{def:algregular}.  However we can generalise this statement to include more general examples.

\begin{Theorem}\label{thm:part1:8}
Assume that in a neighbourhood of a point $p\in M$, the characteristic polynomial of a Nijenhuis operator $L$ factorises into several monic polynomials with smooth coefficients
$$
\chi_L(t) = \prod_{\alpha=1}^s \chi_\alpha (t), \qquad  \deg \chi_\alpha(t)= k_\alpha, \quad \sum_{\alpha=1}^s k_\alpha = \dim M.
$$
Suppose that all the coefficients of all polynomials $\chi_\alpha$'s are functionally independent \footnote{Notice that the total number of these coefficients is exactly $\dim M$.} at the point $p$ (i.e., their differentials are linearly independent at this point). Then there exists a local coordinate system 
$$
\underbrace{x_1^1,\dots x_1^{k_1}}_{x_1}, \  \underbrace{x_2^1,\dots x_2^{k_2}}_{x_2}, \ \dots , \ \underbrace{x_s^1,\dots x_s^{k_s}}_{x_s} 
$$
in which $L$ splits into direct sum of operators $L_1(x_1), L_2(x_2),\dots, L_s(x_s)$, i.e.,
\begin{equation}
\label{eq:15}
L=\mathrm{diag} \Bigl( L_1(x_1), L_2(x_2),\dots, L_s(x_s)\Bigr),
\end{equation}
where
\begin{equation}
\label{eq:16}
L_\alpha(x_\alpha) = 
 \begin{pmatrix}
x_\alpha^1 & 1 &  & & \\
x_\alpha^2 & 0 & 1 & & \\
\vdots & \vdots & \ddots & \ddots & \\
x_\alpha^{k_\alpha -1} &  0 & \dots & 0 & 1\\
x_\alpha^{k_\alpha} & 0  & \dots & 0 & 0 
\end{pmatrix}.
\end{equation}
\end{Theorem}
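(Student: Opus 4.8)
The plan is to reduce the statement to the uniqueness principle of Corollary~\ref{cor:Bols1}, exactly as in the proof of Theorem~\ref{thm:part1:7}, but organised block by block. Write $\chi_\alpha(t)=t^{k_\alpha}+\sigma_\alpha^1 t^{k_\alpha-1}+\dots+\sigma_\alpha^{k_\alpha}$ for the coefficients of the $\alpha$-th factor. By hypothesis the $n=\sum_\alpha k_\alpha$ functions $\{\sigma_\alpha^j\}$ have linearly independent differentials at $p$, hence on a whole neighbourhood of $p$; I may therefore take them (up to sign) as local coordinates, setting $x_\alpha^j:=-\sigma_\alpha^j$. In these coordinates I introduce the candidate operator $\widetilde L:=\mathrm{diag}\bigl(L_1(x_1),\dots,L_s(x_s)\bigr)$ given by \eqref{eq:15}--\eqref{eq:16}, so that the entire task reduces to proving $L=\widetilde L$.

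First I would verify that $\widetilde L$ is itself Nijenhuis. Each block $L_\alpha(x_\alpha)$ is precisely the canonical operator of Theorem~\ref{thm:part1:7} in its own group of variables, hence a Nijenhuis operator in those variables. Since distinct blocks act on disjoint coordinate subspaces and depend on disjoint groups of variables, every mixed component $\mathcal N_{\widetilde L}(\partial_{x_\alpha^i},\partial_{x_\beta^j})$ with $\alpha\neq\beta$ vanishes identically, because the relevant coordinate Lie brackets are zero and $\widetilde L$ preserves each block while depending only on that block's variables. Hence $\mathcal N_{\widetilde L}=0$. A direct expansion of the companion-type determinant in \eqref{eq:16} gives $\det(t\cdot\Id-L_\alpha)=\chi_\alpha(t)$ under the sign convention $x_\alpha^j=-\sigma_\alpha^j$, so block-diagonality yields $\chi_{\widetilde L}(t)=\prod_\alpha\chi_\alpha(t)=\chi_L(t)$ at every point. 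Thus $L$ and $\widetilde L$ are two Nijenhuis operators with the same characteristic polynomial.

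The remaining and genuinely delicate step is to conclude $L=\widetilde L$ from Corollary~\ref{cor:Bols1}: that corollary demands functional independence of the coefficients $\sigma_1,\dots,\sigma_n$ of the \emph{full} polynomial $\chi_L$ (almost everywhere), whereas the hypothesis controls only the coefficients of the \emph{factors}. These conditions truly differ --- the coefficients of $(t-a)(t-b)$ become dependent along $a=b$ although $a,b$ are independent. The bridge is the classical fact that the Jacobian of the multiplication map $(\chi_1,\dots,\chi_s)\mapsto\prod_\alpha\chi_\alpha$ on coefficient spaces equals $\pm\prod_{\alpha<\beta}\mathrm{Res}(\chi_\alpha,\chi_\beta)$. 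Since the $\{x_\alpha^j\}$ are honest coordinates, this product of resultants is a nonzero polynomial in them, hence vanishes only on a nowhere-dense set; away from that set the factors are pairwise coprime, the multiplication map is a local diffeomorphism, and consequently $\ddd\sigma_1,\dots,\ddd\sigma_n$ are linearly independent. So the full coefficients are functionally independent almost everywhere near $p$, the ``global'' form of Corollary~\ref{cor:Bols1} applies, and $L=\widetilde L$, as required.

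I expect essentially all the difficulty to sit in this last paragraph, specifically in confirming that $\prod_{\alpha<\beta}\mathrm{Res}(\chi_\alpha,\chi_\beta)\not\equiv 0$ --- equivalently, that the factors can be made pairwise coprime, which holds because their coefficients vary independently --- and hence that the degenerate locus is negligible. An alternative avoiding the resultant identity would be to apply the splitting Theorem~\ref{thm1} on the dense open set where the factorisation is admissible (in the sense of Example~\ref{ex:Bols2}) and then match the adapted coordinates, but the resultant route stays closest to the uniqueness principle already in hand. Everything else --- the coordinate choice, the Nijenhuis property of $\widetilde L$, and the characteristic-polynomial bookkeeping --- is routine.
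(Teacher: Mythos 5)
Your proposal is correct and follows essentially the same route as the paper, whose proof consists of the single remark that the statement ``follows immediately from Corollary~\ref{cor:Bols1}'' after taking the factor coefficients as coordinates and comparing with the explicit block-diagonal model. The one step you elaborate --- passing from functional independence of the factor coefficients to almost-everywhere independence of the coefficients of $\chi_L$ via the Jacobian identity $\pm\prod_{\alpha<\beta}\mathrm{Res}(\chi_\alpha,\chi_\beta)$ for the multiplication map --- is exactly the detail the paper leaves implicit, and your treatment of it is sound.
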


\begin{proof}
The proof is similar to that of Theorems \ref{thm:4.3} and \ref{thm:part1:7} above and follows immediately from Corollary \ref{cor:Bols1}.
\end{proof}

Notice that in this theorem  $L$, in general,  is neither semisimple, nor $\mathrm{gl}$-regular, nor algebraically generic. For example, if  all variables $x_\alpha^\beta$'s vanish simultaneously  at the point $p$,  then $L(p)$ is nilpotent with $s$ Jordan blocks of sizes $k_1,\dots,k_s$.  However,  at almost every  point from a neighbourhood of $p$ the operator $R$ is semisimple with distinct eigenvalues. 

In this view,  formulas \eqref{eq:15} and \eqref{eq:16} can be understood as a Nijenhuis deformation of a linear operator $L(p)$ of an arbitrary algebraic type.  And other way around, they show one of possible scenarios for degeneration of a Nijenhuis operator that is $\mathrm{gl}$-regular at generic points. In particular, it is easy to see that (in an open domain of $\R^n$) one can construct an example of a Nijenhuis operator that takes all possible algebraic types for a given dimension. 

Another important corollary of Theorem \ref{thm:4.3} is  

\begin{Corollary}\label{cor:stableblock}
Differentially non-degenerate singular points are $C^2$-stable.  
\end{Corollary}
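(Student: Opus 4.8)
The plan is to reduce the stability claim to the normal-form result already available, exploiting the fact that the canonical form \eqref{eq:14} is governed \emph{solely} by the coefficients $\sigma_1,\dots,\sigma_n$ of the characteristic polynomial. First I would unwind the perturbation: since $R_2$ has a zero of order $2$ at $p$, the operators $L$ and $\widetilde L = L + R_2$ share the same $1$-jet at $p$, i.e. $L(p)=\widetilde L(p)$ and all first partial derivatives of the entries agree at $p$. Because each $\sigma_i$ is a fixed polynomial in the entries of the operator, the chain rule shows that the characteristic coefficients $\sigma_i$ and $\widetilde\sigma_i$ of $L$ and $\widetilde L$ also have the same $1$-jet at $p$; in particular $\sigma_i(p)=\widetilde\sigma_i(p)$ and $\ddd\sigma_i(p)=\ddd\widetilde\sigma_i(p)$.

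Next I would observe that differential non-degeneracy is inherited by $\widetilde L$: since $\ddd\widetilde\sigma_1(p),\dots,\ddd\widetilde\sigma_n(p)$ coincide with the linearly independent differentials $\ddd\sigma_1(p),\dots,\ddd\sigma_n(p)$, the perturbed operator is again differentially non-degenerate at $p$. Combined with the standing assumption that $\widetilde L$ is Nijenhuis, this lets me invoke Theorem \ref{thm:part1:7} for \emph{both} operators. Thus there are local charts $F=(-\sigma_1,\dots,-\sigma_n)$ and $\widetilde F=(-\widetilde\sigma_1,\dots,-\widetilde\sigma_n)$ in which $L$ and $\widetilde L$ respectively take the identical canonical form \eqref{eq:14}; equivalently $F_*L = \widetilde F_*\widetilde L = C$, where $C$ denotes the universal companion-type tensor of \eqref{eq:14} on $\R^n$. (Here the genuine content is the uniqueness principle of Corollary \ref{cor:Bols1}, which forces the operator to be recovered from its characteristic coefficients.)

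Finally I would set $\phi = \widetilde F^{-1}\circ F$. Then $\widetilde L = (\widetilde F^{-1})_* C = (\widetilde F^{-1})_*(F_* L) = \phi_* L$, so $\phi$ transforms $L$ into $\widetilde L$. To verify the base-point condition, note that $F(p)=(-\sigma_1(p),\dots,-\sigma_n(p))$ equals $\widetilde F(p)=(-\widetilde\sigma_1(p),\dots,-\widetilde\sigma_n(p))$ by the $0$-jet agreement $\sigma_i(p)=\widetilde\sigma_i(p)$, whence $\phi(p)=\widetilde F^{-1}(F(p))=p$, exactly as required by Definition \ref{def:structstable}.

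The computation is routine, and I expect no serious obstacle; the only points that demand care are the passage from the $1$-jet coincidence of the operators to that of the $\sigma_i$ (so that $\widetilde L$ remains differentially non-degenerate and Theorem \ref{thm:part1:7} applies) and the bookkeeping of pushforward conventions guaranteeing $\phi_*L=\widetilde L$ rather than its inverse. The substantive input is the normal form of Theorem \ref{thm:part1:7}; everything else amounts to the observation that stability is already encoded in the fact that a differentially non-degenerate Nijenhuis operator is reconstructed from $\sigma_1,\dots,\sigma_n$ alone.
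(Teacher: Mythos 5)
Your proposal is correct and follows essentially the same route as the paper: the paper's proof likewise observes that differential non-degeneracy is preserved under order-$2$ perturbations and that the canonical form \eqref{eq:14} of Theorem \ref{thm:part1:7} is parameter-free, so both $L$ and $\widetilde L$ are conjugate to the same model and hence to each other. You have merely spelled out the details (the $1$-jet agreement of the $\sigma_i$, the composition $\phi=\widetilde F^{-1}\circ F$, and the base-point check) that the paper leaves implicit.
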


\begin{proof}
Indeed,  the property of being differentially non-degenerate is preserved under perturbations of order 2.  It remains to notice that the canonical form \eqref{eq:14} is unique (i.e., contains no parameters) and therefore does not change under such perturbations.
\end{proof}

\begin{Remark}{\rm
We do not know whether the Nijenhuis operators given by  \eqref{eq:15}--\eqref{eq:16} are $C^2$-stable (at the origin).  The problem is that factorisability of the characteristic polynomial may not be preserved under perturbations. For general (i.e. not necessarily Nijenhuis) perturbations,  this property definitely disappears, but  it is not clear what happens under perturbation in the class of Nijenhuis operators.
}\end{Remark}

\subsection{The case of a Jordan block}

First we treat the case of a Jordan block with a constant eigenvalue $\lambda\in\R$. Without loss of generality we assume  that $\lambda=0$, i.e., $L$ is nilpotent.

\begin{Theorem}\label{thm:part1:9}
Let $L$ be a Nijenhuis operator which, at each point, is similar to a nilpotent Jordan block.  Then there exists a local coordinate system $x_1,\dots, x_n$ in which $L(x)$ is constant:
\begin{equation}
\label{eq:nilpconst}
L (x) =  \begin{pmatrix} 0 &  & & \\
1 & 0 &  & \\
 &\ddots & \ddots & \\
 & &1 & 0 
 \end{pmatrix}.
\end{equation} 
\end{Theorem}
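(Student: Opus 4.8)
The plan is to construct a coframe $\beta_0,\beta_1,\dots,\beta_{n-1}$ of \emph{closed} $1$-forms satisfying $L^*\beta_k=\beta_{k+1}$ and $L^*\beta_{n-1}=0$. Integrating them via the Poincaré lemma, $\beta_k=\ddd x_{n-k}$, produces coordinates in which $L^*\ddd x_j=\ddd x_{j-1}$ and $L^*\ddd x_1=0$; dualising, this is exactly $L\partial_{x_i}=\partial_{x_{i+1}}$, i.e. the constant matrix \eqref{eq:nilpconst}. So the whole theorem reduces to producing a single ``cyclic generator'' $\beta_0=\ddd f$: a closed $1$-form whose iterates $(L^*)^k\ddd f$ are all closed and pointwise independent.

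The engine for propagating closedness is Definition~\ref{def:Nij4}. Applying $\mathcal N_L=0$ to $\alpha=\beta_{k-1}$ (so that $L^*\alpha=\beta_k$ and $(L^*)^2\alpha=\beta_{k+1}$), formula \eqref{eq:NijForForms} becomes
$$
\ddd\beta_{k+1}=-\,\ddd\beta_{k-1}(L\cdot,L\cdot)+\ddd\beta_k(L\cdot,\cdot)+\ddd\beta_k(\cdot,L\cdot).
$$
Hence if $\beta_{k-1}$ and $\beta_k$ are closed, so is $\beta_{k+1}$. Since $\beta_0=\ddd f$ is automatically closed, the entire chain is closed as soon as $\beta_1=L^*\ddd f$ is. Thus the statement reduces to finding one function $f$ with $L^*\ddd f$ closed and $\ddd f$ cyclic for $L^*$ (equivalently $\ddd f$ does not annihilate $\Ker L$, i.e. $\ddd f\notin\Image L^*$).

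For existence I would induct on $n=\dim M$. The line field $\Ker L$ is integrable and $L$-invariant, and a short computation with $\mathcal N_L(\eta,\xi)$ for $\xi\in\Ker L$ shows $L\eta$ is $\Ker L$-preserving whenever $\eta$ is; so by Proposition~\ref{prop:quotient1} the quotient operator $\bar L$ on $N=M/\Ker L$ is well defined and Nijenhuis. As $\Image L^{n-1}=\Ker L$ and $L(\Image L^{n-1})=0$, the operator $\bar L$ is again a single nilpotent Jordan block, now of size $n-1$. By the induction hypothesis $N$ carries closed independent forms $\bar\beta_0,\dots,\bar\beta_{n-2}$ with $\bar L^*\bar\beta_k=\bar\beta_{k+1}$ and $\bar L^*\bar\beta_{n-2}=0$. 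Since $L$ and $\bar L$ are $\pi$-related for the projection $\pi:M\to N$, one checks $L^*\circ\pi^*=\pi^*\circ\bar L^*$, so the pullbacks $\pi^*\bar\beta_0,\dots,\pi^*\bar\beta_{n-2}$ are closed on $M$ and already supply the ``tail'' $\beta_1,\dots,\beta_{n-1}$ of the desired chain: they annihilate $\Ker L$ and satisfy $L^*\beta_k=\beta_{k+1}$, $L^*\beta_{n-1}=0$.

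What remains — and this is the main obstacle — is to find the top generator, i.e. a function $f$ with $L^*\ddd f=\beta_1:=\pi^*\bar\beta_0$. Here I would use that $\Ker L^*=\mathrm{Ann}(\Image L)$ is spanned by an exact form $\ddd h$ (because $\Image L$ is integrable of codimension one, Proposition~\ref{prop:image}), take a fixed smooth solution $\gamma$ of $L^*\gamma=\beta_1$, write the candidate $\ddd f=\gamma+t\,\ddd h$, and reduce the closedness $\ddd(\ddd f)=0$ to the equation $\ddd t\wedge\ddd h=-\ddd\gamma$ for the unknown $t$. The compatibility condition $\ddd h\wedge\ddd\gamma=0$ required to solve it is precisely where the Nijenhuis hypothesis must be spent (and is exactly what fails for a general, non-Nijenhuis $L$). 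Once $f$ is obtained, the cyclicity of $\ddd f$ and the independence of $\beta_0,\dots,\beta_{n-1}$ follow from the fact that $\beta_1,\dots,\beta_{n-1}$ span $\Image L^*$ while $\ddd f\notin\Image L^*$; integrating the closed forms to coordinates $x_{n-k}$ then yields \eqref{eq:nilpconst}. The base case $n=1$ is trivial and $n=2$ is an elementary ODE, which I would use to calibrate the general compatibility argument.
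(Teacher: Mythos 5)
Your plan is correct, and its analytic engine is exactly the paper's: apply Definition \ref{def:Nij4} to a $1$-form $\alpha$ whose images $L^*\alpha$ and $(L^*)^2\alpha$ are closed to conclude $\ddd\alpha(L\cdot,L\cdot)=0$, then correct $\alpha$ by a multiple of the exact generator $\ddd h$ of $\Ker L^*=\mathrm{Ann}(\Image L)$ using a leafwise Poincar\'e lemma. The organisation, however, is genuinely different. The paper builds the chain from the bottom: it produces $x_1$ with $L^*\ddd x_1=0$ from the integrability of $\Image L$ and then performs $n-1$ successive correction steps on $M$ itself, solving $L^*\ddd x_{k+1}=\ddd x_k$ one index at a time; no quotients are used. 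You build the chain from the top, noting that closedness propagates down the whole chain once $\beta_0=\ddd f$ and $\beta_1=L^*\ddd f$ are closed, and you manufacture the tail $\beta_1,\dots,\beta_{n-1}$ by induction on dimension through the quotient $M/\Ker L$ (legitimate: $\Ker L$ is a line field, the quotient operator is well defined and Nijenhuis by Proposition \ref{prop:quotient1}, and is again a single nilpotent block of size $n-1$). Your route buys a single correction step per dimension level, with the closed forms on the quotient coming for free by pullback; it costs the extra bookkeeping of $\pi$-relatedness and of checking that the pullbacks span $\Image L^*$. Two points you leave implicit both close immediately with tools you already have: the compatibility $\ddd h\wedge\ddd\gamma=0$ is obtained by applying your propagation identity to $\alpha=\gamma$ itself, since $L^*\gamma=\beta_1$ and $(L^*)^2\gamma=\beta_2$ are already closed (this is literally the computation in the paper's proof); and the cyclicity $\ddd f\notin\Image L^*$ holds because otherwise $\beta_1=L^*\ddd f$ would lie in $\Image (L^*)^2=\mathrm{span}(\beta_2,\dots,\beta_{n-1})$, contradicting the independence of $\beta_1,\dots,\beta_{n-1}$.
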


This result was independently obtained by several authors (see \cite{Boubel}, \cite{Grifone}, \cite{Thompson}),  we suggest one more proof which will next be used for the non-constant eigenvalue case.

\begin{proof} We shall prove this statement by induction using Definition \ref{def:Nij4} of the Nijenhuis tensor $\mathcal N_L$.  For this reason,  instead of $L$ we shall work with its dual $L^*$.

As we know from Proposition \ref{prop:image}, the image of $L$ defines 
an integrable distribution of codimension 1 and, consequently, there exists
a regular function $x_1$ such that $\Image L =\Ker \dd x_1$,  or equivalently,  $L^* \dd x_1 =0$.

The next coordinate function $x_2$ must satisfy the 
relation $L^* \dd x_2 =\dd x_1$.  Let us show that such a function exists.
First of all, notice that we can always find a 1-form $\alpha$ which
satisfies $L^* \alpha=\dd x_1$ (since $\dd x_1$ belongs to the image of $L^*$).
From Definition \ref{def:Nij4} and the fact that $\mathcal N_L=0$ we have:
$$
\dd({L^*}^2\alpha) (u,v) + \dd\alpha (Lu,Lv) -
\dd(L^*\alpha) (Lu,v) - \dd(L^*\alpha) (u, Lv) = 0.
$$
Using $\dd (L^*\alpha)=\dd \dd x_1=0$ and ${L^*}^2\alpha=L^*\dd x_1=0$  we immediately
obtain:
$$
\dd\alpha (Lu,Lv)=0
$$

Since the vectors of the form $Lu$ generate the kernel of $\dd x_1$, this relation means that $\dd\alpha = \beta\wedge \dd x_1$ for some 1-form $\beta$.
If we think of $\beta$ as a differential 1-form on the leaves of the
foliation $\{ x_1=\mathrm{const}\}$ which depends on $x_1$ as parameter, then $\beta$
is closed and, consequently, can locally be presented in the form $\beta=\dd h$ where $h$ is also understood as a function on the leaves of this foliations depending on $x_1$ as a parameter. 

Now we set $\tilde\alpha=\alpha - h \dd x_1$. This new form
still satisfy $L^* \tilde \alpha=\dd x_1$ and, in addition, is closed.
Hence locally we can fund a function $x_2$ such that $\dd x_2=\tilde\alpha$.

We continue the construction by induction. Suppose we have constructed 
$x_1, \dots , x_k$ such that $L^* \dd x_{i+1} = \dd x_{i}$, $i=1,\dots,k-1$.
We are going to construct $x_{k+1}$ such that $L^* \dd x_{k+1} = \dd x_k$.
Just as before, first take $\alpha$ such that $L^* \alpha = \dd x_{k}$  (notice that such a form $\alpha$ exists for purely algebraic reasons if $k<n$,  moreover this form will be linearly independent with $\dd x_1, \dots, \dd x_k$).
Then we have
$$
0=\dd({L^*}^2\alpha) (u,v) + \dd\alpha (Lu,Lv) -
\dd(L^*\alpha) (Lu,v) - \dd(L^*\alpha) (u, Lv) =
$$
$$
\dd \dd x_{k-1} (u,v) + \dd\alpha (Lu,Lv) -
\dd \dd x_k (Lu,v) - \dd \dd x_k (u, Lv) = \dd\alpha (Lu,Lv)
$$

We see that the situation is absolutely the same as before and we literally
repeat the above construction to find $x_{k+1}$.  As a result we will obtain $n$ independent functions $x_1,\dots, x_n$ satisfying $L^*(\dd x_1)=0$ and 
$ L^* (\dd x_{k+1}) = \dd x_{k}$,  $k=1,\dots, n-1$, which is equivalent to the statement of Theorem \ref{thm:part1:9}.  
\end{proof}

It follows from this proof that local coordinates $x_1, \dots, x_n$ are not unique.  Indeed, at each step we need to solve the equation of the form  $L^* \dd f = \dd x_k$ and then set $f = x_{k+1}$.    This shows that as long as $x_k$ is chosen,  the coordinate $x_{k+1}$ is defined modulo arbitrary function  $h$ satisfying   $L^* \dd h  =0$, i.e. up to adding a function of the from $h=h(x_1)$.  In other words,  canonical coordinate systems are parametrized by $n$ functions of one variable. This observation can be reformulated in the following more geometric way.

\begin{Corollary}
In the assumptions of Theorem \ref{thm:part1:9},  consider an arbitrary regular smooth curve $\gamma(t)$ which is transversal to the  distribution $\Image L$.  Then there exists a unique local coordinate system $x_1,\dots, x_n$ in which $L$ takes the canonical form \eqref{eq:nilpconst} and  the parametric equation of $\gamma$ becomes $\gamma(t) = (t, 0, \dots, 0, 0)$.
\end{Corollary}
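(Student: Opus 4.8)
The plan is to exploit the non-uniqueness of the canonical coordinates recorded in the observation following the proof of Theorem~\ref{thm:part1:9}: such systems are parametrised by $n$ functions of one variable, and the transversal curve $\gamma$ supplies exactly enough normalisation conditions to pin all of them down. The first thing I would do is reformulate transversality. In any canonical system $\Image L = \Ker \dd x_1$, so $\gamma$ being transversal to $\Image L$ means precisely that $t\mapsto x_1(\gamma(t))$ has nonvanishing derivative and hence is a local diffeomorphism onto an interval. This is the only place transversality enters, and it is what will make every later matching condition solvable.

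I would then construct the coordinates one at a time, keeping track of their prescribed values along $\gamma$. For $x_1$: the distribution $\Image L$ is an integrable foliation of codimension one (Proposition~\ref{prop:image}), and by transversality each nearby leaf meets $\gamma$ exactly once; declaring $x_1$ to be constant on leaves and equal to $t$ on the leaf through $\gamma(t)$ gives the unique function with $L^*\dd x_1 = 0$ and $x_1(\gamma(t)) = t$. For the inductive step, suppose $x_1,\dots,x_k$ have been built so that $L^*\dd x_1 = 0$, $L^*\dd x_{i+1}=\dd x_i$, and $x_1\circ\gamma = t$, $x_i\circ\gamma = 0$ for $2\le i\le k$. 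The proof of Theorem~\ref{thm:part1:9} produces \emph{some} solution $\hat x_{k+1}$ of $L^*\dd \hat x_{k+1} = \dd x_k$, and that same argument shows any two solutions differ by an element of $\Ker L^* = \operatorname{span}(\dd x_1)$, i.e.\ by an arbitrary function $h(x_1)$. I would fix $h$ by demanding $(\hat x_{k+1}+h(x_1))\circ\gamma = 0$; since $x_1\circ\gamma = t$ this reads $h(t) = -\hat x_{k+1}(\gamma(t))$ and so determines $h$ uniquely. Setting $x_{k+1}=\hat x_{k+1}+h(x_1)$ preserves $L^*\dd x_{k+1}=\dd x_k$, because $L^*\dd x_1 = 0$, and achieves $x_{k+1}\circ\gamma = 0$.

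The crucial simplification — and the one place where a careless setup could fail — is that the residual freedom at each step is genuinely only an additive function of the \emph{single} variable $x_1$; this follows from the one-dimensionality of $\Ker L^*$, which is exactly the structural fact extracted in the proof of Theorem~\ref{thm:part1:9}. Once $x_1\circ\gamma = t$ is arranged at the outset, every subsequent normalisation $x_{k+1}\circ\gamma = 0$ collapses to the scalar equation $h(t)=-\hat x_{k+1}(\gamma(t))$, which is uniquely solvable. Running the induction up to $k=n$ yields a canonical coordinate system with $\gamma(t)=(t,0,\dots,0)$, and since $x_1$ is forced at the first step and $h$ is forced at every later one, the system is unique. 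I do not expect an essential obstacle here beyond checking that the existence-and-freedom statements of Theorem~\ref{thm:part1:9} transfer verbatim and doing the mild local bookkeeping that guarantees $\gamma$ meets each leaf of $\Image L$ only once.
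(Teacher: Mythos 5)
Your proposal is correct and follows exactly the route the paper intends: the corollary is presented there as a geometric reformulation of the observation that canonical coordinate systems are parametrised by $n$ functions of one variable, and you use the transversal curve $\gamma$ to supply precisely the $n$ normalisations (fixing $x_1\circ\gamma=t$ via transversality, then killing the additive freedom $h(x_1)$ in each $x_{k+1}$ by requiring $x_{k+1}\circ\gamma=0$). The details you add — that $\Ker L^*$ is one-dimensional and spanned by $\dd x_1$, forcing the residual freedom to be a function of $x_1$ alone — are the correct justification of the paper's counting.
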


The next theorem gives a normal form for a Jordan block with a non-constant eigenvalue.

\begin{Theorem} 
\label{thm:part1:10}
Suppose that in a neighbourhood of a point $p\in M$, a Nijenhuis operator $L$ is algebraically generic and similar to the standard Jordan block with
a non-constant real eigenvalue $\lambda (x)$. Then there exists a local
coordinate system $x_1, \dots, x_n$ in which the matrix $L(x)$ takes the following form:
\begin{equation}
\label{eq:nonconstJB}
 L(x)=L_{\mathrm{can}}=
\begin{pmatrix} \lambda(x_1) &    &  &  &  &   \\  
       1      & \!\!\! \lambda(x_1)  &    &      &       &               \\
         \xi_3    &   1  & \!\!\!  \lambda(x_1)  &     &       &             \\
          \vdots   &     &  1   & \ddots &   &  \\
           \xi_{n-1}  &     &     &  \ddots       &   \lambda(x_1)      &  \\
            \xi_{n} &     &     &        &       1   & \!\!\!  \lambda(x_1) 
\end{pmatrix} \,  \quad \mbox{where} \quad
\begin{array}{l}
\xi_3 = -\lambda'   x_3,\\
 \xi_4 = -\lambda'  \, 2  x_4,\\
 \vdots \\
 \xi_{n-1} = -\lambda'\, (n-3) \,x_{n-1},\\
 \xi_{n} = -\lambda' \, (n-2) x_{n},\\
\end{array}
\end{equation}
and $\lambda' = \frac{\partial \lambda}{\partial x_1}$.  If $\ddd \lambda(p)\ne 0$, then in \eqref{eq:nonconstJB} we may set $\lambda(x_1)=x_1$ and $\lambda'=1$.  
\end{Theorem}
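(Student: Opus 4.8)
The plan is to adapt the inductive, ``dual'' construction from the proof of Theorem \ref{thm:part1:9} to the non-constant eigenvalue setting, working throughout with $L^*$ and differential $1$-forms via Definition \ref{def:Nij4}. The crucial new ingredient is that the eigenvalue $\lambda$ itself supplies the first coordinate. Indeed, by Proposition \ref{prop:dlambda} we have $(L-\lambda\cdot\Id)^*\ddd\lambda = 0$, so $\ddd\lambda$ is a $\lambda$-eigencovector of $L^*$; since $L$ is $\mathrm{gl}$-regular (a single Jordan block), $\Ker(L-\lambda\cdot\Id)^*=\mathrm{Ann}\,\Image(L-\lambda\cdot\Id)$ is one-dimensional and hence spanned by $\ddd\lambda$ wherever $\ddd\lambda\ne 0$. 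As $\ddd\lambda$ is closed, its span is an integrable line field and I may pick a coordinate $x_1$ with $\ddd x_1\parallel\ddd\lambda$; then $\lambda$ is constant on the level sets of $x_1$, i.e. $\lambda=\lambda(x_1)$, and when $\ddd\lambda(p)\ne0$ one simply takes $x_1=\lambda$, so that $\lambda'=1$. Writing $N=L-\lambda\cdot\Id$ for the pointwise nilpotent part, this gives the first row $L^*\ddd x_1=\lambda\,\ddd x_1$.

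Next I would construct $x_2,\dots,x_n$ by induction, seeking at the $k$-th step a closed $1$-form $\ddd x_{k+1}$ solving
\begin{equation*}
(L^*-\lambda\cdot\Id)\,\ddd x_{k+1} = \ddd x_k + \xi_{k+1}\,\ddd x_1,
\end{equation*}
with $\xi_{k+1}$ a function to be determined. For $\xi_{k+1}=0$ the right-hand side lies in $\Image N^*$ as long as $k<n$, so a not-necessarily-closed solution $\alpha$ exists and can be chosen smoothly because $N$ has constant rank. Exactly as in Theorem \ref{thm:part1:9}, I would feed $\alpha$ into the Nijenhuis identity of Definition \ref{def:Nij4}; the terms $\ddd({L^*}^2\alpha)$ and $\ddd(L^*\alpha)$ now produce extra contributions proportional to $\ddd\lambda=\lambda'\,\ddd x_1$, and after simplification the identity forces $\ddd\alpha$ to be of the form $\beta\wedge\ddd x_1$ modulo terms that are themselves multiples of $\ddd x_1$. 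Restricting $\beta$ to the leaves $\{x_1=\const\}$ gives a closed $1$-form there, so $\beta=\ddd h$ on the leaves, and replacing $\alpha$ by $\alpha-h\,\ddd x_1$ yields the desired closed form $\ddd x_{k+1}$. The residual $\ddd\lambda$-contribution, which cannot be gauged away, is precisely what survives as the correction term $\xi_{k+1}\,\ddd x_1$.

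It then remains to compute the coefficients $\xi_k$ explicitly. Once coordinates $x_1,\dots,x_n$ satisfying the recursion are in place, $L$ automatically carries $\lambda$ on the diagonal, $1$'s on the subdiagonal and the functions $\xi_k$ in the first column. I would pin these down by substituting this matrix shape back into the equations $\mathcal N_L=0$ (equivalently, by using up the residual gauge freedom of adding functions of $x_1$ to each coordinate, as in the Corollary following Theorem \ref{thm:part1:9}); this is a finite linear recursion in the $\xi_k$ whose solution, after normalisation, is $\xi_k=-\lambda'(k-2)\,x_k$. Setting $x_1=\lambda$ in the case $\ddd\lambda(p)\ne0$ gives $\lambda'=1$ and the stated canonical form.

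The main obstacle, I expect, is the bookkeeping in the inductive step: with $\lambda$ non-constant, every application of $L^*$ generates $\ddd\lambda$-terms, so one must track carefully which parts of $\ddd\alpha$ are annihilated by the gauge correction $\alpha\mapsto\alpha-h\,\ddd x_1$ and which are genuinely forced, and then verify that the forced part is exactly a multiple of $\ddd x_1$, so that the recursion closes and does not spill into the higher $\ddd x_j$. Determining the precise coefficient $-(k-2)\lambda'$, rather than merely some unknown functions $\xi_k$, is the second delicate point, since it hinges on the exact $\lambda$-dependence produced by $\ddd({L^*}^2\alpha)$ and on consistently exhausting the coordinate freedom. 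Note that, in contrast to Theorems \ref{thm:4.3}--\ref{thm:part1:8}, the uniqueness Corollary \ref{cor:Bols1} is unavailable here: the characteristic polynomial equals $(t-\lambda)^n$, whose coefficients are all functions of $\lambda$ alone and therefore functionally dependent.
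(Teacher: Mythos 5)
Your overall skeleton does match the paper's: the proof there also rests on the flag $\Image L_\lambda^k$ (with $L_\lambda=L-\lambda\cdot\Id$), on the characterisation $L_\lambda^*\,\ddd x_{k+1}=\ddd x_k-\lambda'(k-1)\,x_{k+1}\,\ddd x_1$ of the canonical coordinates, and on reducing the non-constant case to Theorem \ref{thm:part1:9} by restricting to the leaves $\{x_1=\const\}$, where $\lambda$ is constant. However, two steps of your plan have genuine gaps. First, your construction of $x_1$ runs the implication backwards: you define $x_1$ by $\ddd x_1\parallel\ddd\lambda$, which requires $\ddd\lambda\neq 0$, whereas the theorem allows $\ddd\lambda(p)=0$ (only its last sentence assumes otherwise), and a non-constant eigenvalue may well have vanishing differential at $p$. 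The correct route is the opposite one: $\Image L_\lambda$ is an integrable codimension-one distribution (Proposition \ref{prop:image}), so one takes $x_1$ to be a first integral of it; Proposition \ref{prop:dlambda} then places $\ddd\lambda$ in the one-dimensional annihilator of $\Image L_\lambda$, i.e.\ $\ddd\lambda=c\,\ddd x_1$, and closedness forces $c=c(x_1)$, hence $\lambda=\lambda(x_1)$ with no nondegeneracy assumption.

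Second, and more seriously, the mechanism by which the correction term $\xi_{k+1}\,\ddd x_1$ is supposed to ``survive'' is not explained, and your gauge transformation cannot produce it: since $L^*\ddd x_1=\lambda\,\ddd x_1$ and $L^*-\lambda\cdot\Id$ is $C^\infty$-linear, replacing $\alpha$ by $\alpha-h\,\ddd x_1$ does not change $(L^*-\lambda\cdot\Id)\alpha$ at all. So if that correction alone made $\alpha$ closed, you would land on the constant-eigenvalue normal form with $\xi_{k+1}=0$, which is false in general. To obtain the stated form one must insert $\xi_{k+1}=-\lambda'(k-1)\,x_{k+1}$ into the right-hand side from the outset, so the unknown coordinate appears on both sides and the inductive step becomes a genuine first-order linear PDE; its solvability is exactly the content of the paper's final lemma (solve on each leaf $\{x_1=\const\}$ by the argument of Theorem \ref{thm:part1:9}, then absorb the remaining defect $g\,\ddd x_1$ by an explicit ODE $\partial h/\partial x_2=-g-\lambda'(n-2)h$, after checking that $g$ depends only on $x_1,x_2$). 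Your plan to ``pin down the $\xi_k$ afterwards by substituting into $\mathcal N_L=0$'' defers precisely this content; it also omits the analogue of the paper's Lemma \ref{lem:forJB} (that the upper-left $k\times k$ block depends only on $x_1,\dots,x_k$, i.e.\ that the quotient operators on $M/\mathcal F_k$ are well defined), without which the coordinate-by-coordinate induction does not close. Your closing observation that Corollary \ref{cor:Bols1} is unavailable here is correct and well taken.
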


Formula \eqref{eq:nonconstJB} is analogous to those obtained in \cite{BoMa2011} and \cite{turiel} in the presence of an additional compatible algebraic structure, either Riemannian metric or symplectic form (cf. Section \ref{sect:6}). For this reason, the proofs in \cite{BoMa2011}, \cite{turiel} are not applicable in our situation.

\begin{proof}
Let $\lambda: U(p) \to \R$ be the eigenvalue of $L$ considered as a smooth function  and denote $L_\lambda = L - \lambda \cdot\Id$.  

According  to Corollary \ref{cor:image_k} we have the flag of integrable distributions 
$$
\{0\} \subset  \operatorname{Image} L_\lambda^{n-1}  \subset  \operatorname{Image} L_\lambda^{n-2}  \subset \dots \subset
 \operatorname{Image} L_\lambda \subset TM.  
$$
Notice that $\Ker L_\lambda^{n-k} = \operatorname{Image} L_\lambda^{k}$ for any $k=1,\dots,n-1$.  Let $\mathcal F_k$ denote  the foliation generated by $\Image L_\lambda^k$.

Consider a local coordinate system $y_1, \dots, y_n$ adapted to this flag, more precisely such that   
$\Ker L_\lambda^{n-k} = \operatorname{Image} L_\lambda^{k} =\mathrm{span}(\partial_{y_{k+1}},\dots, \partial_{y_n})$. In other words,  we can think of $y_{k+1}, \dots, y_n$ as coordintates on the leaves of $\mathcal F_k$  and of $y_1,\dots, y_k$ as coordinates on the (local) quotient space $M/\mathcal F_k$. In this coordinate system $L$ takes lower triangular form with $\lambda$ on the diagonal,  moreover $\lambda = \lambda (y_1)$ by Proposition \ref{prop:dlambda}:

First of all we will show that for each $L$-invariant foliation $\mathcal F_k$, we can correctly define the quotient operator $\tilde L_k$ on the (local) quotient manifold $M/\mathcal F_k$.  This property is equvalent to the following lemma   (cf. Section \ref{sect:quotient}).

\begin{Lemma}
\label{lem:forJB}
In adapted coordinates $y_1, \dots, y_n$,  let   $L_k$ denote the $k\times k$ submatrix of  $L$  composed by $L^i_j$ with $1\le i,j \le k$:  
$$
 L=
\begin{pmatrix} 
\boxed{ 
\begin{matrix}
& & \\
\ & L_k &\  \\
& & 
\end{matrix}}
  &  \begin{matrix} 0 & \dots & 0 \\
\vdots & \ddots & \vdots \\
0 & \ddots & 0 \end{matrix}
  \\
\begin{matrix} * & \dots & * \\
\vdots & \ddots & \ddots \\
* & \dots & * \end{matrix}
& \begin{matrix} \lambda & \!\!\!\ddots & \vdots \\
  \ddots  & \!\!\!\ddots & 0 \\
\dots     & *  &\lambda \\
   \end{matrix}
\end{pmatrix}.
$$
Then the entries $L_k$ depend only on the first $k$ coordinates $y_1,\dots, y_k$ and $L_k$ defines a Nijenhuis operator on $M/
\mathcal F_k$.  
\end{Lemma}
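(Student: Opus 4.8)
The plan is to deduce everything from the quotient construction of Section~\ref{sect:quotient}. First I would record that, since $L$ commutes pointwise with $L_\lambda=L-\lambda\cdot\Id$ (at each point the two differ by a scalar operator), every distribution $T\mathcal F_k=\operatorname{Image} L_\lambda^{k}$ is $L$-invariant, and by Corollary~\ref{cor:image_k} it is integrable. In the adapted coordinates with $T\mathcal F_k=\mathrm{span}(\partial_{y_{k+1}},\dots,\partial_{y_n})$, this invariance forces the upper-right $k\times(n-k)$ block of $L$ to vanish, so $L$ is block lower-triangular and its top-left corner $L_k$ is, at each point, a well-defined endomorphism of $T_pM/T_p\mathcal F_k$. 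The lemma then reduces to two assertions: that $L_k$ does not depend on the leaf coordinates $y_{k+1},\dots,y_n$ (so that it descends to a genuine operator on $M/\mathcal F_k$), and that this descended operator is Nijenhuis. The second assertion is free: by Proposition~\ref{prop:quotient1}, as soon as the quotient operator is well defined it is automatically Nijenhuis. Hence the entire content of the lemma is the well-definedness, i.e. $\partial_{y_\gamma}(L_k)^i_j=0$ for $\gamma>k$ and $i,j\le k$.

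To prove well-definedness I would invoke the invariant criterion of Proposition~\ref{prop:quotient1}: it suffices to show that $L$ sends every $\mathcal F_k$-preserving vector field to an $\mathcal F_k$-preserving one. A field tangent to $\mathcal F_k$ is $\mathcal F_k$-preserving by integrability and is mapped by $L$ back into $T\mathcal F_k$ by the invariance above, so the only fields to check are the basic ones $\partial_{y_1},\dots,\partial_{y_k}$; for these the criterion is literally the statement $\partial_{y_\gamma}(L_k)^i_j=0$. Thus I must show $[L\partial_{y_j},w]\in T\mathcal F_k$ for every $j\le k$ and every $w\in T\mathcal F_k=\operatorname{Image} L_\lambda^{k}$. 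The available tools are the Nijenhuis identity $\mathcal N_L(\partial_{y_j},w)=0$ of Definition~\ref{def:Nij1}, the relation $L^*\ddd\lambda=\lambda\,\ddd\lambda$ of Proposition~\ref{prop:dlambda} (which in particular gives $\lambda=\lambda(y_1)$), and the nested inclusions $L_\lambda\bigl(\operatorname{Image} L_\lambda^{k-1}\bigr)=\operatorname{Image} L_\lambda^{k}$.

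Concretely I would write $w=L_\lambda w'$ with $w'\in T\mathcal F_{k-1}$, expand $[L\partial_{y_j},w]$ using $Lw'=L_\lambda w'+\lambda w'$ and the Leibniz rule for the Lie bracket, and substitute into the Nijenhuis identity; the scalar terms $(L\partial_{y_j})(\lambda)$ that appear are controlled by Proposition~\ref{prop:dlambda}. Organising the argument as an induction on $k$ (the base case $k=1$ being exactly $\lambda=\lambda(y_1)$) lets me assume at each step that the membership statement already holds one level out, for $T\mathcal F_{k-1}$, and upgrade it to $T\mathcal F_k$. I expect the genuine obstacle to be precisely this bookkeeping: tracking which bracket lands in which level of the flag $\operatorname{Image} L_\lambda\supset\operatorname{Image} L_\lambda^{2}\supset\cdots$ and verifying that the correction terms produced by $\ddd\lambda$ and by the subdiagonal entries cancel modulo $T\mathcal F_k$. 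Once that cancellation is checked, Proposition~\ref{prop:quotient1} simultaneously delivers the independence of $L_k$ from the leaf coordinates and the vanishing of its Nijenhuis tensor, which completes the proof.
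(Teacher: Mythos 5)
Your plan is correct and rests on the same pillars as the paper's proof --- the quotient criterion of Proposition \ref{prop:quotient1}, the relation $L^*\ddd\lambda=\lambda\,\ddd\lambda$ of Proposition \ref{prop:dlambda}, and the $L$-invariant integrable flag $\Image L_\lambda^k$ --- but your induction runs in the opposite direction, and that changes where the work sits. The paper never performs a ``general $k$'' computation on $M$: it verifies well-definedness only for the quotient by the one-dimensional foliation $\mathcal F_{n-1}=\Ker L_\lambda$, where the test field $\eta$ satisfies $L\eta=\lambda\eta$, so the Nijenhuis identity collapses in two lines to $L_\lambda[L\xi,\eta]=(L_\lambda\xi)(\lambda)\,\eta$ with the right-hand side killed by Proposition \ref{prop:dlambda}; the induction is then run on the successive quotient manifolds, each time quotienting the current operator by the kernel of its own $L_\lambda$. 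You instead stay on $M$ and climb the flag outward from $\mathcal F_1$, which forces you to deal with $w\in\Image L_\lambda^{k}$ that are not eigenvectors, via the substitution $w=L_\lambda w'$. That substitution is the entire content of your inductive step and you only assert the cancellation, so let me confirm it goes through: writing $Lw'=w+\lambda w'$ and substituting into $\mathcal N_L(\partial_{y_j},w')=0$ one finds
$$
[L\partial_{y_j},w]=L_\lambda[L\partial_{y_j},w']+L[\partial_{y_j},w]-LL_\lambda[\partial_{y_j},w']+\partial_{y_j}(\lambda)\,w,
$$
the two dangerous terms $\pm\lambda\,\partial_{y_j}(\lambda)\,w'$ cancelling because $(L\partial_{y_j})(\lambda)=\lambda\,\partial_{y_j}(\lambda)$ by Proposition \ref{prop:dlambda}. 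Every remaining term lies in $T\mathcal F_k$: the first by your inductive hypothesis when $j\le k-1$, and by integrability of $\mathcal F_{k-1}$ together with $L_\lambda(T\mathcal F_{k-1})=T\mathcal F_k$ when $j=k$; the second because coordinate fields preserve $\mathcal F_k$ and $T\mathcal F_k$ is $L$-invariant; the third because $[\partial_{y_j},w']\in T\mathcal F_{k-1}$. So your argument closes. The trade-off is clear: the paper buys a two-line computation at the price of iterating the quotient construction, while your route avoids iterated quotients at the price of the bookkeeping above.
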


\begin{proof} In view of Proposition  \ref{prop:quotient1}, the second statement follows from the first one.
Therefore it is sufficient to verify the first property for the largest submatrix $L_{n-1}$ and then proceed by induction.

By Proposition \ref{prop:quotient1}, this property is equivalent to the following condition.
Let $\eta$ be an arbitrary vector field that is tangent to $\mathcal F_{n-1}$, i.e., $\eta \in \Ker L_\lambda$ and $\xi$ be an arbitrary vector filed that preserves $\mathcal F_{n-1}$,  i.e.,  $[\xi, \eta]  \in \Ker L_\lambda$.   We need to show that $L\xi$ preserves the foliation $\mathcal F_{n-1}$ too, that is, $[L\xi, \eta]\in \Ker L_\lambda$.

Using that $L$ is Nijenhuis we obtain:
$$
L(L-\lambda\cdot\Id)[\xi,\eta] = L[L\xi,\eta] + L[\xi,L\eta] - [L\xi,L\eta] - \lambda L [\xi,\eta] =
$$
and therefore, taking into account that $L\eta = \lambda\eta$:
$$
\begin{aligned}
&= L[L\xi,\eta] + L[\xi,\lambda \eta] - [L\xi,\lambda\eta] - \lambda L [\xi,\eta] 
= L[L\xi,\eta]  + \xi(\lambda) \cdot L\eta - \lambda [L\xi, \eta]  - L\xi(\lambda) \cdot \eta \\
&= L _\lambda [L\xi, \eta]  -  L _\lambda \xi (\lambda) \cdot \eta.
\end{aligned}
$$
The left hand side of this relation vanishes as $[\xi, \eta]  \in \Ker L_\lambda$,  the 
al derivative $L _\lambda \xi (\lambda)$ vanishes by Proposition \ref{prop:dlambda} since  $\zeta (\lambda)=0$ for any vector field $\zeta \in \Image L_\lambda$.  Hence, $[L\xi, \eta] \in \Ker L_\lambda$ as needed.  Thus, we conclude that the entries of $L_{n-1}$ do not depend on $y_n$  as required. In particular, $L_{n-1}$ defines a Nijenhuis operator on $M/\mathcal F_{n-1}$ in coordinates   $y_1,\dots, y_{n-1}$ (see Proposition \ref{prop:quotient1}). 
\end{proof}

 Notice that Lemma \ref{lem:forJB} holds true for the canonical form \eqref{eq:nonconstJB} and moreover the submatrix $(L_{\mathrm{can}})_k$ of  $L_{\mathrm{can}}$  represents a canonical form for  a Jordan block in dimension $k$.   This suggests the following  scheme  of the proof  by induction.   We start with the $1\times 1$ submatrix $L_1=\bigr(\lambda(x_1)\bigl)$ which already has a canonical form, so we set $x_1=y_1$.  Then we reduce $L_2$ to the 2-dimensional canonical form $\bigl(L_{\mathrm{can}}\bigr)_2$  by changing only one coordinate $y_2 \mapsto x_2$ and leaving all the others unchanged.    And so on, assuming that $L_k=\bigl(L_{\mathrm{can}}\bigr)_k$  (which means that the first canonical coordinates $x_1,\dots, x_k$ have been already constructed),  we reduce $L_{k+1}$ to the canonical form $\bigl(L_{\mathrm{can}}\bigr)_{k+1}$  by finding the next canonical coordinate $x_{k+1}$ in terms of $x_1,\dots, x_k$ and $y_{k+1}$. The process finishes in $n-1$ steps.

To complete the proof, we need to prove the following lemma justifying the above procedure.   To simply notation,  we assume that $k=n-1$. In other words, we treat the very last step which is absolutely similar to each previous one.

\begin{Lemma}
Suppose that a Nijenhuis operator  $L$ has been reduced to the  form
$$
L = \begin{pmatrix}
\, \boxed { \begin{matrix}   \\  \! \bigl(L_{\mathrm{can}}\bigr)_{n-1} \! \\  \\  \end{matrix}} & \begin{matrix} \!\! 0 \\ \!\! \vdots \\ \!\!  0 \end{matrix} \\
\, * \ \  \dots \ \  *   &  \!\!\! \lambda (x_1) 
\end{pmatrix}
$$
in some (adapted) coordinates $x_1,\dots, x_{n-1}, y_n$ (in other words, $L$ is ``almost canonical'' except for the last row which contains some functions denoted by $*$ and depending on all variables).  Then we may change the last coordinate only, i.e., set $x_n= f(x_1, \dots, x_{n-1}, y_n)$ leaving all the other coordinates unchanged in such a way that $L$ will take the desired canonical form \eqref{eq:nonconstJB}.
\end{Lemma}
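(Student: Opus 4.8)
The plan is to track what happens to $L$ under a coordinate change that affects only the last slot, $y_n \mapsto x_n = f(x_1,\dots,x_{n-1},y_n)$, and to reduce the required matching with \eqref{eq:nonconstJB} to a first-order PDE for the single unknown $f$. Write $L$ in the stated block form with upper-left block $B=(L_{\mathrm{can}})_{n-1}$ (which, by Lemma \ref{lem:forJB}, depends only on $x_1,\dots,x_{n-1}$), zero last column above the corner, corner entry $\lambda$, and last row $(a_1,\dots,a_{n-1})$ with the $a_j$ functions of all variables. Since the new coordinates differ from the old only in the last entry, the Jacobian $A$ is block lower-triangular with $\Id$ in its upper-left $(n-1)\times(n-1)$ block and bottom row $(\partial_{x_1}f,\dots,\partial_{x_{n-1}}f,\partial_{y_n}f)$. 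A direct computation of $\widetilde L = A\,L\,A^{-1}$ shows that $B$, the zero column and the corner $\lambda$ are all preserved, while only the last row changes into
$$
\widetilde w^{\,T} = v^T (B-\lambda\cdot\Id) + (\partial_{y_n} f)\, w^T, \qquad v = (\partial_{x_1}f,\dots,\partial_{x_{n-1}}f),\quad w=(a_1,\dots,a_{n-1}).
$$
Thus everything reduces to choosing $f$ so that $\widetilde w^{\,T}$ equals the canonical row $(\xi_n,0,\dots,0,1)$ with $\xi_n=-\lambda'(n-2)\,x_n$.

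First I would record that the subdiagonal entry $a_{n-1}=L^n_{n-1}$ is nonvanishing near $p$: since $L$ is a single Jordan block of size $n$ one has $L_\lambda^{n-1}\ne 0$, and in the present block form (with $B-\lambda\cdot\Id$ nilpotent of order $n-1$) this forces $a_{n-1}\ne0$. This both makes the coordinate change admissible, as $\partial_{y_n}f\ne0$, and lets one read off $\widetilde w^{\,T}$ component by component. Using the explicit shape of $B-\lambda\cdot\Id$ (subdiagonal $1$'s and first column $\xi_3,\dots,\xi_{n-1}$), the middle components give $\partial_{y_n} f = 1/a_{n-1}$ and $\partial_{x_k} f = -a_{k-1}/a_{n-1}$ for $3\le k\le n-1$, the first-column component expresses $\xi_n$ through $f$ and these derivatives, and $\partial_{x_1}f$ stays free. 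Substituting $\xi_n=-\lambda'(n-2)f$ turns the first-column equation into a linear inhomogeneous relation $\partial_{x_2}f + (n-2)\lambda'\,f = G$, where $G$ is an explicit function of the $a_j$ and the $\xi_i$; recall $\lambda=\lambda(x_1)$ by Proposition \ref{prop:dlambda}, so the coefficient $(n-2)\lambda'$ depends on $x_1$ only.

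The heart of the proof, and the step I expect to be the main obstacle, is to show that this overdetermined system for $f$ is consistent. Prescribing $\partial_{y_n}f,\ \partial_{x_3}f,\dots,\partial_{x_{n-1}}f$ and, through the linear relation, $\partial_{x_2}f$ while leaving $\partial_{x_1}f$ free, the system is solvable precisely when the relevant mixed partials agree, e.g. $\partial_{x_k}(1/a_{n-1})=\partial_{y_n}(-a_{k-1}/a_{n-1})$ and the analogous cross-relations among the $a_j$. The plan is to verify that these compatibility identities are exactly the components of the Nijenhuis condition ${\mathcal N}_L=0$ not yet consumed in Lemma \ref{lem:forJB}, namely the non-quotient parts of ${\mathcal N}_L(\partial_{y_n},\partial_{x_k})$, after reducing modulo the already-canonical block $B$ and using $\lambda=\lambda(x_1)$. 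Granting this, one integrates successively (the nonvanishing of $a_{n-1}$ keeps each integration well posed) to produce $f$ up to a function of $x_1$; the leftover freedom in $\partial_{x_1}f$ reflects exactly the non-uniqueness of canonical coordinates parametrised by functions of one variable already observed after Theorem \ref{thm:part1:9}.

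It then remains to confirm that with this $f$ the first-column entry is genuinely $\xi_n=-\lambda'(n-2)x_n$, and not merely some function: this is forced by how $\xi_n$ was built into the first-column equation together with Proposition \ref{prop:dlambda} governing $\ddd\lambda$. Finally, if $\ddd\lambda(p)\ne0$ one normalises $\lambda(x_1)=x_1$ and $\lambda'=1$, which brings $L$ to the form \eqref{eq:nonconstJB} and completes the inductive step.
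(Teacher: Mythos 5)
Your setup is sound and is in fact equivalent to the paper's: conjugating by the block-triangular Jacobian shows that only the last row moves, and the resulting row equation $v^T(B-\lambda\cdot\Id)+(\partial_{y_n}f)\,w^T=(\xi_n,0,\dots,0,1)$ is precisely the dual formulation $L_\lambda^*\,\dd f=\dd x_{n-1}-\lambda'(n-2)f\,\dd x_1$ of \eqref{eq:laststep}; your observations that $a_{n-1}\ne 0$ (from $L_\lambda^{n-1}\ne 0$) and that $\partial_{x_1}f$ remains free are also correct. The problem is that the entire mathematical content of the lemma sits in the step you introduce with ``The plan is to verify\dots'' and then skip with ``Granting this''. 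The compatibility identities such as $\partial_{x_k}(1/a_{n-1})=\partial_{y_n}(-a_{k-1}/a_{n-1})$, the analogous cross-relations among the $a_j$, and, more delicately, the Frobenius condition for the $\partial_{x_2}$-equation --- which contains $f$ itself as a zeroth-order term and therefore requires a flatness check rather than mere closedness of a prescribed $1$-form --- are exactly what must be deduced from $\mathcal N_L=0$, and no such deduction is given. Asserting that they coincide with ``the components of $\mathcal N_L=0$ not yet consumed in Lemma \ref{lem:forJB}'' is a conjecture, not a proof; it is not clear a priori that the Nijenhuis relations, which are polynomial in the entries of $L$ and their first derivatives, organise themselves into precisely these rational identities in the $a_j$ without further work.

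The paper avoids this direct verification altogether. It first restricts \eqref{eq:laststep} to the leaves $\{x_1=\mathrm{const}\}$, where $\lambda$ is constant, so that leafwise solvability follows from the argument already established for the nilpotent case (Theorem \ref{thm:part1:9}, which is where the Nijenhuis condition is actually spent, via Definition \ref{def:Nij4} and the correction $\tilde\alpha=\alpha-h\,\dd x_1$). The leafwise solution $\tilde f$ then satisfies \eqref{eq:laststep} only up to an error term $g\,\dd x_1$, i.e.\ $L=L_{\mathrm{can}}+P$ with $P$ supported in a single matrix entry; a second use of the Nijenhuis condition shows $g=g(x_1,x_2)$, and the final correction $f=\tilde x_n+h(x_1,x_2)$ reduces to the elementary ODE $\partial_{x_2}h=-g-\lambda'(n-2)h$. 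If you want to keep your direct route you must actually carry out the mixed-partials computation from $\mathcal N_L=0$; otherwise the cleaner path is the paper's two-stage reduction to the constant-eigenvalue case plus a one-variable correction.
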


\begin{proof}  It follows from \eqref{eq:nonconstJB} that the last coordinate $x_n$ is characterised by relation $L^* \dd x_n = \xi_n \dd x_1 + \dd x_{n-1} + \lambda \dd x_n$.  It means that the function $f$ can be found by solving the following partial differential equation:
\begin{equation}
\label{eq:laststep}
L_\lambda^*  \dd f = \dd x_{n-1}  - \lambda' (n-2) f \, \dd x_1.
\end{equation}

Let us first restrict this equation onto the leaves of the foliation $\mathcal F_1$ given by $\{ x_1= \mathrm{const}\}$. In other words, we think of $f$ as a function in $x_2,\dots, x_{n-1}, y_n$  dealing with  $x_1$ as a parameter.   Then on each leaf $\{ x_1 = \mathrm{const}\}$ we obtain a simpler equation
$L^*_\lambda \dd f = \dd x_{n-1}$  with $\lambda = \lambda(x_1)$ being a constant.  But this equation has been already analysed in the proof of Theorem \ref{thm:part1:9}  and we know that it admits a solution for each fixed $x_1$.   
Take an arbitrary solution $\tilde f =\tilde f (x_1; \, x_2,\dots, x_{n-1}, y_n)$ of this kind.   If we now consider $\tilde f$ as a function of all variables, then it satisfies \eqref{eq:laststep} modulo $\dd x_1$, i.e., we have  
$$
L^*_\lambda \dd \tilde f = \Bigl( \dd x_2  -   \lambda' (n-2) \tilde f \, \dd x_ 1 \Bigr) + g \, \dd x_1
$$
where $g$ is a certain smooth function.

If we set $\tilde x_n =\tilde f (x_1; \, x_2,\dots, x_{n-1}, y_n)$  ($x_1, x_2,\dots, x_{n-1}$ remain unchanged), then in this new coordinate system 
$x_1,  \dots, x_{n-1}, \tilde x_n$ the operator $L$ will coincide with \eqref{eq:nonconstJB} except for one single element, namely 
\begin{equation}
\label{eq:almostcan}
L = L_{\mathrm{can}} + P, \quad  \mbox{where }  P =\begin{pmatrix}
0 & 0 & \dots & 0 \\
\vdots & \vdots & \ddots & \vdots\\
0  & 0 & \dots & 0 \\
 g & 0 & \dots & 0
\end{pmatrix}
\end{equation}
Using the fact that $L$ and $L_{\mathrm{can}}$ are both  Nijenhuis,  we can verify by a straightforward computation that $g$ depends only on $x_{1}$ and $x_2$. 

We now consider equation \eqref{eq:laststep} again with $L$ given by \eqref{eq:almostcan} in coordinates $x_1, \dots, x_{n-1}, \tilde x_n$:  
$$
L^*_\lambda \dd  f = \dd x_{n-1}    - \lambda' (n-2) f  \dd x_n 
$$

We will be looking for $f$ in the form $f = \tilde x_n +  h (x_1, x_2)$.  Substitution gives
$$
L^*_\lambda \dd \bigl(\tilde x_n +  h \bigr) = \dd x_{n-1} - \lambda' (n-2) \bigl(\tilde x_n +  h \bigr) \dd x_1,
$$
or, in more detail:
$$
\dd x_{n-1} -  \lambda' (n-2) \tilde x_n \dd x_1 + g \dd x_1 + \frac{\partial h}{\partial x_{2}} \dd x_1  = \dd x_{n-1} - \lambda' (n-2) \bigl(\tilde x_n +  h \bigr) \dd x_1,
$$
which finally reduces to
$$
   \frac{\partial h}{\partial x_{2}}   = - g - \lambda' (n-2)   h,
$$
with $g=g(x_1, x_2)$ and $h=h(x_1,x_2)$. This equation obviously has a solution for any $g$.  This completes the proof.
\end{proof}

As explained above, this lemma immediately leads to the desired result. \end{proof}

\begin{Remark}
\label{rem:canformgeneral}
{\rm
If a Nijenhuis operator $L$ is similar  (over $\mathbb C$) to a pair of $n\times n$ Jordan blocks with complex conjugate eigenvalues $\alpha \pm \ii \beta$,  $\beta\ne 0$,  then its canonical form can be easily obtained by introducing  the canonical complex structure $J$ on $M$ and interpreting $L$ as a holomorphic Nijenhuis operator on the $n$-dimensional complex manifold $(M,J)$ as explained in Section \ref{noreal} (cf. Remark \ref{rem:fromrealtocompl}).  After this we can literally repeat the above construction with real variables $x_i$ replaced by complex variables $z_i$.  The final conclusion is the same as that of Theorem \ref{thm:part1:10} with straightforward modifications: $L_{\mathrm{can}}$ given by \eqref{eq:nonconstJB} is now a complex matrix,   $x_i$ should be replaced by $z_i$ and the function $\lambda (z_1)$ is holomorphic.  
}\end{Remark}

\begin{Remark}
\label{rem:canformgenpoints}{\rm
By the 
splitting theorem, in order to obtain a local description of a Nijenhuis operator in a neighborhood of an algebraically generic point, it is sufficient to do this  under the assumption  that the  operator has one real, or two complex conjugate eigenvalues (Theorem \ref{th:bols3.2}).  If in addition we assume that a Nijenhuis operator $L$ is $\mathrm{gl}$-regular, i.e.,  each eigenvalue corresponds to exactly one Jordan block, then  Theorems \ref{thm:part1:5} and \ref{thm:part1:10}  and Remark \ref{rem:canformgeneral},  provide this description.  Namely,  if $L$ is $\mathrm{gl}$-regular and algebraically generic, then locally $L$ splits into direct sum of blocks of  4 types:  
\begin{itemize}
\item  trivial $1\times 1$  Jordan block corresponding to a real eigenvalue $\lambda$:   $\bigl( \lambda (u)\bigr)$ where $\lambda(u)$ is a smooth function of $u$;
\item  trivial $2\times 2$  Jordan block corresponding to a pair of complex conjugate eigenvalues $\rho=a + \ii b$, $\bar \rho = a - \ii b$, $b\ne 0$:
$\begin{pmatrix}
a(x,y) & \!\!\! - b(x,y) \\
b(x,y) &  a(x,y)
\end{pmatrix}$,
where $\rho(x,y)=a(x,y) + \iii b(x,y)$ is a holomorphic function in  $z=x + \ii y$;
\item non-trivial Jordan block given by \eqref{eq:nonconstJB} corresponding to a real eigenvalue;
\item non-trivail complex Jordan block corresponding to a pair of complex conjugate eigenvalues obtained from \eqref{eq:nonconstJB} as explained in Remark \ref{rem:canformgeneral}.
\end{itemize}
}\end{Remark}

\section{Linearisation and left-symmetric algebras}\label{sect:5}

\subsection{Singular points of scalar type and linearisation}

In the previous section we studied the local structure and normal forms of Nijenhuis operators $L$ at those points which satisfy some kind of regularity conditions  (either algebraic regularity, or algebraic genericity, or differential non-degeneracy).  For instance,  Theorem \ref{thm:part1:5}  describes the simplest situation when the eigenvalues of $L$ are all distinct.  This section is devoted to the opposite case:  we will assume that all the eigenvalues of $L$ at a given point $p_0 \in M$ coincide and, even more than that,  we will assume that $L(p_0) = \lambda\cdot\Id$, $\lambda\in \R$, that is, 
$p_0$ is  a (singular) point of  {\it scalar type}   (Definition \ref{def:scalar}).

Since for any Nijenhuis operator $L$ and constant $\lambda\in\R$,  the operator $L - \lambda\cdot\Id$ is still Nijenhuis, then without loss of generality we may assume that at a point $p_0\in M$ of scalar type we have  $L(p_0) = 0$.  Under this assumption, we can apply a natural linearisation procedure for $L(x)$ at the point $p_0$ by expanding $L(x)$ into Taylor series in some local coordinate system $x^1, \dots, x^n$  (centred at $p_0$)  and taking the linear part\footnote{The same procedure is used for linearisation of a Poisson structure at a singular point \cite{weinstein}.}:
$$
L(x) = 0 + L_1 (x) +  L_2 (x) + L_3(x) + \dots 
$$
where the entries of $L_k(x)$ are homogeneous polynomials in $x^1, \dots, x^n$  of degree $k$ and 
$$
L_1  =  L_{\mathrm{lin}}  \quad   \mbox{with} \quad  (L_{\mathrm{lin}} )^i_j(x) = \sum_{k=1}^n  l^i_{j,k} x^k, \mbox{ where } l^i_{j,k} = \frac{\partial L^i_j}{\partial x^k}(p_0).
$$

Notice that the linear part $L_1=L_{\mathrm{lin}}$ is itself a Nijenhuis operator  that can be naturally understood as the linearisation of $L$ at the point $p_0\in M$ (indeed it is straightforward to check that its components $(L_{\mathrm{lin}} )^i_j(x)$ satisfy the Nijenhuis relations  $\bigl(\mathcal N_{L_{\mathrm{lin}}} \bigr)^i_{jk}=0$,  see Definition \ref{def:Nij2}).  In a more conceptual way,  the linearisation $L_{\mathrm{lin}}$ of $L$ should be considered as a $(1,1)$-tensor field on the tangent space $T_{p_0}  M$. 

\begin{Definition}
Let $L(p_0)=0$ and $\eta \in T_{p_0} M$. {\rm The {\it linearisation} of $L$ at $p_0\in M$ is defined as the operator field
$$  
L_{\mathrm{lin}} (\eta) : T_\eta (T_{p_0} M) \to T_\eta (T_{p_0} M)
$$
defined by the following formula:
\begin{equation}
\label{eq:invlinearisation}
\xi \mapsto \big[\,\widetilde \eta,  L \widetilde\xi\,\big] (p_0)  \quad \mbox{for any $\xi \in T_\eta (T_{p_0} M)\simeq T_{p_0} M$},
\end{equation}
where $\widetilde \xi$ and $\widetilde\eta$ are two arbitrary vector fields on $M$ such that $\widetilde \xi (p_0)=\xi$ and $\widetilde\eta(p_0)=\eta$.
}\end{Definition}

The following proposition is obvious.

\begin{Proposition}
\begin{enumerate}
\item[$(i)$]  The operator \eqref{eq:invlinearisation} is well defined, i.e., does not depend on the extensions $\widetilde \xi$ and $\widetilde \eta$ of the tangent vectors $\xi$ and $\eta$.

\item[$(ii)$] $L_{\mathrm{lin}} (\eta)$ is a Nijenhuis operator on the vector space $T_{p_0} M$  whose components, in any Cartesian coordinate system, are linear functions, i.e.,
$(L_{\mathrm{lin}} )^i_j(\eta) = \sum_{k=1}^n  l^i_{j,k} \eta^k$. 
Moreover,  if $x^1, \dots, x^n$ is a local coordinate system on $M$, then in the corresponding Cartesian coordinate system on $T_{p_0} M$ we have
$l^i_{j,k} =   \frac{\partial L^i_j}{\partial x^k}(p_0)$.
\end{enumerate}
\end{Proposition}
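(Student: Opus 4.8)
The plan is to prove both parts by a single coordinate computation of the Lie bracket in \eqref{eq:invlinearisation}, using crucially the hypothesis $L(p_0)=0$. First I would fix a local coordinate system $x^1,\dots,x^n$ centred at $p_0$ and write the bracket componentwise. Setting $W=L\widetilde\xi$, so that $W^i=L^i_k\widetilde\xi^k$, the standard formula reads
\[
[\widetilde\eta,W]^i=\widetilde\eta^j\,\pd{W^i}{x^j}-W^j\,\pd{\widetilde\eta^i}{x^j}.
\]
Evaluating at $p_0$, the second term vanishes because $W(p_0)=L(p_0)\widetilde\xi(p_0)=0$; and since
\[
\pd{W^i}{x^j}=\pd{L^i_k}{x^j}\,\widetilde\xi^k+L^i_k\,\pd{\widetilde\xi^k}{x^j},
\]
the contribution of the derivatives of $\widetilde\xi$ in the first term also drops out at $p_0$, again because $L^i_k(p_0)=0$. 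Thus both places where the jets of the extensions could enter are killed by the single hypothesis $L(p_0)=0$.

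This at once settles part $(i)$: what survives is
\[
[\widetilde\eta,L\widetilde\xi]^i(p_0)=\sum_{j,k}\eta^j\,\pd{L^i_k}{x^j}(p_0)\,\xi^k,
\]
which depends only on $\eta=\widetilde\eta(p_0)$ and $\xi=\widetilde\xi(p_0)$ and not on the chosen extensions. Reading the same formula as a matrix acting on $\xi$ and relabelling the summation indices gives the explicit expression in part $(ii)$: the operator $L_{\mathrm{lin}}(\eta)$ has entries $(L_{\mathrm{lin}}(\eta))^i_j=\sum_k l^i_{j,k}\eta^k$ with $l^i_{j,k}=\pd{L^i_j}{x^k}(p_0)$. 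In particular, viewed as a $(1,1)$-tensor field on $T_{p_0}M$ with the Cartesian coordinates $\eta^1,\dots,\eta^n$ induced by $x^1,\dots,x^n$, the operator $L_{\mathrm{lin}}$ coincides with the degree-one Taylor term $L_1$ of the expansion $L(x)=L_1(x)+L_2(x)+\dots$.

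It then remains to check that $L_{\mathrm{lin}}$ is Nijenhuis, and here I would identify it with $L_1$ and compare lowest-order terms. Expanding each component $(\mathcal N_L)^i_{jk}$ via Definition \ref{def:Nij2} and substituting $L=L_1+L_2+\dots$, every summand is a product of an entry of $L$ (which has no constant term, as $L(p_0)=0$, hence starts in degree one) with a first derivative of an entry of $L$ (which starts in degree zero); therefore the lowest homogeneous part of $(\mathcal N_L)^i_{jk}$ is of degree one and equals precisely $(\mathcal N_{L_1})^i_{jk}$, because $L_1$ is linear and its derivatives are constant. Since $\mathcal N_L\equiv 0$, its degree-one part vanishes, so $\mathcal N_{L_1}=0$ and $L_{\mathrm{lin}}=L_1$ is Nijenhuis. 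There is no genuine obstacle in this argument; the only point requiring care is the bookkeeping of the two derivative contributions in the bracket, where the vanishing of $L$ at $p_0$ is exactly what removes the dependence on the jets of $\widetilde\eta$ and $\widetilde\xi$ and makes the definition invariant.
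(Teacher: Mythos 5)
Your proof is correct and is exactly the coordinate computation the paper has in mind: the paper declares the proposition obvious and only remarks that checking $\mathcal N_{L_{\mathrm{lin}}}=0$ is ``straightforward,'' and your argument — killing both jet-dependent terms in the bracket via $L(p_0)=0$, then isolating the degree-one homogeneous part of $(\mathcal N_L)^i_{jk}$ to conclude $\mathcal N_{L_1}=0$ — is the standard way to fill in that omitted verification. No gaps.
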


The same object can be defined in an algebraic way by interpreting  $l^i_{j,k}$  as a structure tensor of a certain $n$-dimensional algebra $(\mathfrak a_L, *)$ defined on the tangent space $T_{p_0} M$  (in the basis $e_1=\partial_{x^1}, \dots, e_n=\partial_{x^n}$).  In other words,  for any $\xi, \eta \in T_{p_0}$ we set by definition:
\begin{equation}
\xi * \eta = L_{\mathrm{lin}}(\eta)\xi =  \sum_i \left(\sum_{j,k}   l^i_{j,k} \xi^j \eta^k \right) e_i, \qquad l^i_{j,k} =   \frac{\partial L^i_j}{\partial x^k}(p_0).
\end{equation}

Since $L_{\mathrm{lin}}$ is a Nijenhuis operator,  this algebra is expected to be rather special\footnote{Recall that in the case of Poisson tensors, the linearisation procedure also leads to an algebra with very special properties,  namely, a Lie algebra.}.  What can we say about it?  The answer can be found in literature (see, e.g., \cite{winterhalder}) although it does not seem to be widely known, at least amongst differential geometers.  

\begin{Definition}{\rm
An algebra $(\mathfrak a, *)$ is called {\it left-symmetric}  if the following identity holds:
\begin{equation}
\label{eq:LSA}
\xi * (\eta*\zeta) - (\xi*\eta)* \zeta = \eta*(\xi*\zeta) - (\eta*\xi)*\zeta,  \quad \mbox{for all } \xi,\eta,\zeta\in \mathfrak a.
\end{equation}
}\end{Definition}

The definition is due to Vinberg \cite{vinberg};  recent papers on left-symmetric algebras include \cite{burde,burde2}.

Notice that  \eqref{eq:LSA}  can be rewritten as $\mathsf L_\xi \mathsf L_\eta -  \mathsf L_\eta \mathsf L_\xi = \mathsf L_{[\xi, \eta]}$  where $\mathsf L_\xi :  \mathfrak a \to \mathfrak a$ denotes the left multiplication by $\xi$, i.e.  $\mathsf L_\xi \zeta = \xi* \zeta$. The latter relation implies that the commutator $[\xi, \eta] = \xi * \eta - \eta * \xi$ defines the structure of a Lie algebra  on $\mathfrak a$ (called the Lie algebra associated with $\mathfrak a$).

The relation between (linear) Nijenhuis operators and left-symmetric Lie algebras is very natural.

\begin{Proposition}
\label{prop:1.?} 
Consider a (1,1)-tensor field $L$ which is defined on a vector space $T\simeq\R^n$ and such that all of its components are linear functions in Cartesian coordinates $x^1, \dots, x^n$,   that is,
\begin{equation}
\label{eq:lN}
L^i_j (x) = \sum_k l^i_{j,k} x^k.
\end{equation}

Then $L(x)$  is a Nijenhuis operator if and only if $l^i_{j,k}$ are structure constants of a left-symmetric algebra.  We will denote this left-symmetric algebra by $(\mathfrak a_L,*)$.
\end{Proposition}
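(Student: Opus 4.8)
The plan is to compute the Nijenhuis tensor of the linear operator $L$ directly and to recognise the left-symmetric identity \eqref{eq:LSA} inside the result. The starting observation is that, with the product of the proposition, the operator $L$ acts by \emph{right} multiplication: for every $x$ and $\xi$ one has $L(x)\xi = \xi * x$, since both sides have components $\sum_{j,k} l^i_{j,k}\xi^j x^k$. Thus evaluating $L$ at the position vector $x$ is the same as multiplying on the right by $x \in \mathfrak{a}_L$. The key point to keep straight is that the ``coordinate'' index $k$ of $l^i_{j,k}$ is the second factor of the product, while the acted-on index $j$ is the first factor.

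Since the entries $L^i_j$ are linear, $\mathcal{N}_L$ is a tensor field that it suffices to evaluate on the constant coordinate frame $e_a = \partial_{x^a}$. I would use Definition \ref{def:Nij1}; because $[e_a, e_b] = 0$, the leading term drops and
$$
\mathcal{N}_L(e_a, e_b) = [Le_a, Le_b] - L[Le_a, e_b] - L[e_a, Le_b].
$$
Here $Le_a$ is the \emph{linear} vector field $x \mapsto L(x)e_a$ with constant matrix $(A_a)^i_k = l^i_{a,k}$, so I would invoke two elementary bracket rules: the bracket of two linear fields $A_a x, A_b x$ equals $(A_b A_a - A_a A_b)x$, and the bracket of a linear field $A_a x$ with a constant field $e_b$ equals the constant field $-A_a e_b$. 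Substituting these, using $(e_a * x)^j = \sum_k l^j_{a,k} x^k$ and $L(x)(\,\cdot\,) = (\,\cdot\,) * x$ to rewrite every contraction through the product, the three terms collapse to
$$
\mathcal{N}_L(e_a, e_b) = \bigl(e_b*(e_a*x) - (e_b*e_a)*x\bigr) - \bigl(e_a*(e_b*x) - (e_a*e_b)*x\bigr).
$$

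The conclusion is then immediate: the right-hand side is exactly the difference of the two sides of \eqref{eq:LSA} evaluated at $\xi = e_a$, $\eta = e_b$, $\zeta = x$. Hence $\mathcal{N}_L(e_a, e_b)(x) = 0$ for all $a, b$ and all $x$ if and only if \eqref{eq:LSA} holds for all $\zeta$ on basis vectors $\xi, \eta$, which by bilinearity is equivalent to $\mathfrak{a}_L$ being left-symmetric. Both implications of the proposition follow at once.

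I expect the only genuine obstacle to be bookkeeping rather than ideas. One must track that $L$ is right (not left) multiplication, so that the associator appearing in the computation is the one symmetrised in its first two arguments; the signs of the two bracket rules, and the verification that tensoriality legitimately reduces the entire check to the constant frame, are the places where care is needed. Once the three terms are re-expressed through $*$, the match with \eqref{eq:LSA} is purely mechanical.
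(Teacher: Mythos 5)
Your proof is correct. Note that the paper itself gives no argument for this proposition --- it defers to \cite{winterhalder} and to Part II \cite{Part3} with the remark that the verification is ``straightforward'' --- so there is nothing to compare against; your direct computation is precisely the straightforward check being alluded to. All the delicate points come out right: $L(x)$ is indeed right multiplication by $x$ (so that $A_a\zeta = e_a*\zeta$ is left multiplication by $e_a$), the two bracket rules $[A_ax,A_bx]=(A_bA_a-A_aA_b)x$ and $[A_ax,e_b]=-A_ae_b$ carry the correct signs, and the resulting expression $\mathcal N_L(e_a,e_b)(x)=\bigl(e_b*(e_a*x)-(e_b*e_a)*x\bigr)-\bigl(e_a*(e_b*x)-(e_a*e_b)*x\bigr)$ vanishes identically in $x$ if and only if \eqref{eq:LSA} holds, by trilinearity of that identity. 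The reduction to the constant coordinate frame is legitimate since $\mathcal N_L$ is $C^\infty$-bilinear, a fact used throughout the paper, so your argument is complete as stated.
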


The proof of Proposition \ref{prop:1.?} is straightforward and can be found e.g. in \cite{winterhalder},  we also include it in Part II \cite{Part3}  of our series of papers on Nijenhuis geometry.

Thus, there is a natural bijection between linear Nijenhuis tensors and left-symmetric algebras.  In this view,  sometimes by the linearisation of a (non-linear) Nijenhuis tensor $L$ at a singular point $p_0\in M$ of scalar type, we will understand  the left-symmetric algebra $\mathfrak a_L$ defined on  $T_{p_0}M$. 
We will refer to it as the \emph{isotropy left-symmetric algebra}. 

The linearisation problem for a Nijenhuis operator $L$ in a neighbourhood of a scalar-type singular point $p_0$ can be stated as follows:  under which conditions $L$ is (locally) equivalent to its linearisation, that is, to the linear Nijenhuis operator $L_{\mathrm{lin}}$ associated with $\mathfrak a_L$?  In this case we say that $L$  is {\it linearisable} at the singular point $p\in M$.
Sometimes linearisability may follow from the structure of $\mathfrak a_L$.   The next definition is borrowed from Poisson Geometry.  The only modification is that Lie algebras are replaced with left-symmetric algebras and Poisson tensors are replaced with NIjenhuis operators.  

\begin{Definition}\label{def:nondegLSA}
{\rm
A left-symmetric algebra $\mathfrak a$ is called {\it non-degenerate}  if any Nijenhuis operator $L$,  whose isotropy left-symmetric algebra $\mathfrak a_L$ at a singular point $p_0$ is isomorphic to $\mathfrak a$, is linearisable at this point. 
}\end{Definition}

Equivalently,  non-degeneracy of $\mathfrak a$ means that the corresponding linear Nijenhuis operator is $C^2$-stable  (at the origin).

Notice that speaking of linearisability we have to distinguish at least three different cases:  smooth, real analytic and formal.  In this view,   non-degeneracy of a left-symmetric algebra $\mathfrak a$ can be understood in three different settings:  smooth, real analytic and formal.   The following example is treated in detail in Part II of our work \cite{Part3}.

\begin{Ex}{\rm
Consider the two-dimensional left symmetric algebra $\mathfrak b_{1,\alpha}$ with a basis $e_1$, $e_2$ and relations 
$$
e_1*e_1 = 0, \quad e_1 * e_2 = 0, \quad   e_2 * e_1 = e_1, \quad e_2 * e_2 =\alpha e_2.
$$

The corresponding linear Nijenhuis operator is
$$
L_{\mathfrak b_{1,\alpha}} = \begin{pmatrix}  0 & x \\ 0 & \alpha y \end{pmatrix},  \quad \alpha\in\R.
$$

If $\alpha <0$ and is irrational, then this left-symmetric algebra is degenerate in the smooth sense. To show this one can consider the following {\it smooth} perturbation of $L_{\mathfrak b_{1,\alpha}}$:
$$
L =  \begin{pmatrix}  0 & x \\ 0 & \alpha y \end{pmatrix} + \begin{pmatrix}   h (x,y) & g (x,y) \\ 0 & 0 
\end{pmatrix}
$$
where $h$ and $g$ are both flat functions defined by:
$$
h(x,y) = \begin{cases}
\exp\left( - \frac{1}{x^2 (y^2)^{-\frac{1}{\alpha}}}  \right),   & xy\ne 0,\\
0, & xy = 0.
\end{cases} \quad \mbox{and} \quad
g(x,y) = \begin{cases}
h(x,y) \frac{h_y (x,y)}{h_x(x,y)},   & xy\ne 0,\\
0, & xy = 0.
\end{cases}
$$
They satisfy two properties:   1)  $h(x,y)$ is a smooth first integral for the vector field $(x, \alpha y)$  (defined by the second column of 
$L_{\mathfrak b_{1,\alpha}}$ and 2) the relation  $h_x g = h_y h$ holds. These conditions guarantee that  $L$ is Nijenhuis. 

It is obvious, however, that $L$ is not equivalent to $L_{\mathfrak b_{1,\alpha}}$ since the latter has zero eigenvalues at each point but $L$ does not.  

This trick with a flat perturbation does not work in the real analytic case (in particular, the vector field $(x, \alpha y)$ admits no real analytic first integrals) and it can be shown  (see Part 3 \cite{Part3} for details) that the algebra $\mathfrak b_{1,\alpha}$ is analytically non-degenerate for almost all negative irrational $\alpha$'s. 

}\end{Ex}

In the next section, we prove the non-degeneracy for the simplest and most natural left-symmetric Lie algebra, namely, the diagonal one. In general,  the classification problem for left-symmetric algebras is completely open and, to the best of our knowledge,  has not been even touched.

\subsection{Non-degeneracy of the diagonal left-symmetric algebra}\label{sect:5.2}

\begin{Theorem}
\label{thm:lin}
Let $\mathfrak d$ be a non-trivial one-dimensional LSA and $\mathfrak d^n =\oplus_n \mathfrak d$ be the direct sum of $n$ copies of $\mathfrak d$. Then $\mathfrak d^n$ is non-degenerate both in formal and real analytic sense. Equivalently,  consider the linear Nijenhuis operator of the form 
$$
L_{\mathrm{lin}}(x)=\mathrm{diag}(x_1,x_2,\dots, x_n)
$$
and a formal (resp., real analytic)  perturbation of $L_{\mathrm{lin}}$ of the form
\begin{equation}
\label{eq:perturb}
L(x) = L_{\mathrm{lin}}(x) + L_2(x) +L_3(x) +\dots ,
\end{equation}
where the entries of $L_k(x)$ are homogeneous polynomial of degree $k\ge 2$ in $x$ and $L$ is a formal (resp. real analytic) Nijenhuis operator. Then $L(x)$ is formally (resp., real analytically)  linearisable, i.e., there exists a formal  (resp. real analytic) change of variables 
\begin{equation}
\label{eq:coord_change}
y_i = x_i + f_{i,2}(x) + f_{i,3} (x) + \dots
\end{equation}
with $f_{i,k}$ being homogeneous polynomial of degree $k\ge 2$ such that in new coordinates the operator $L$ takes the form
$$
L_{\mathrm{lin}}(y)=\mathrm{diag}(y_1,y_2,\dots, y_n),
$$    
or, more specifically,
\begin{equation}
\label{eq:similarity}
\left( \frac{\partial y}{\partial x}\right)  \bigl(L_{\mathrm{lin}}(x)+ L_2(x) +L_3(x) +\dots \bigr) \left(\frac{\partial y}{\partial x}\right) ^{-1}= L_{\mathrm{lin}}(y),
\end{equation}
where $\left( \dfrac{\partial y}{\partial x}\right)$ is the Jacobi matrix of the coordinate change \eqref{eq:coord_change}.
\end{Theorem}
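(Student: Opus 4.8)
The plan is to build the linearising change of variables \eqref{eq:coord_change} recursively, degree by degree, and then to prove its convergence in the analytic case. Throughout I write $D = L_{\mathrm{lin}} = \mathrm{diag}(x_1,\dots,x_n)$, so that $L = D + L_2 + L_3 + \dots$ with $L_m$ homogeneous of degree $m$ and entries $(L_m)^i_j$; recall that $D$ is itself Nijenhuis, i.e. $\mathcal N_D = 0$, and that $L(0)=0$ is already built into the normalisation.

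The first step is to isolate the linear (cohomological) content of the Nijenhuis condition using the Fr\"olicher--Nijenhuis bracket. Since $\mathcal N_D=0$, Definition \ref{def:FNbracket} gives $\mathcal N_L = [D,\,L-D]_{\mathrm{FN}} + \mathcal N_{L-D}$. Suppose inductively that a polynomial substitution has already removed $L_2,\dots,L_{m-1}$, so that $L = D + L_m + (\text{degree}\ge m+1)$. A degree count based on Definition \ref{def:Nij2} shows that the lowest-order part of $\mathcal N_{L-D}$ has degree $2m-1 > m$, whereas $[D,L_m]_{\mathrm{FN}}$ has degree $m$; hence the degree-$m$ component of the equation $\mathcal N_L = 0$ reduces to the \emph{linear} relation $[D,L_m]_{\mathrm{FN}} = 0$. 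Writing this out with $D^i_j = x_i\delta^i_j$, I expect it to collapse (after specialising the free indices) to the single divisibility relation
\begin{equation}
\label{eq:divis}
(L_m)^i_j = (x_i - x_j)\,\frac{\partial (L_m)^i_i}{\partial x_j}, \qquad i\ne j,
\end{equation}
that is, each off-diagonal entry of $L_m$ is exactly divisible by $x_i-x_j$, with quotient read off from the diagonal.

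Next I would solve the homological equation. Looking for a change $y_i = x_i + f_{m,i}(x)$ with $f_{m,i}$ homogeneous of degree $m$ and expanding the conjugation \eqref{eq:similarity} modulo degree $m+1$, the requirement that the new degree-$m$ block vanish becomes $f_{m,i} = (L_m)^i_i$ on the diagonal and $(x_j - x_i)\,\partial_{x_j} f_{m,i} = -(L_m)^i_j$ off the diagonal. The crucial observation is that the single explicit choice $f_{m,i} := (L_m)^i_i$ satisfies both: substituting it into the off-diagonal identities turns them into precisely \eqref{eq:divis}, which holds by the Nijenhuis condition. Thus at every step the order-$m$ term is eliminated by simply reading off a diagonal entry, with no division whatsoever, and in particular there is no small-divisor problem. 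Iterating yields a formal transformation of the form \eqref{eq:coord_change}, which already proves formal non-degeneracy of $\mathfrak d^n$.

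For the real analytic statement it remains to show that the formal series \eqref{eq:coord_change} converges, and this is the main obstacle. The favourable feature, absent in the usual Poincar\'e--Dulac or Siegel setting, is that the homological operator is inverted by extracting a coefficient, so each $f_{m,i}$ is controlled by the size of the current degree-$m$ block with a constant independent of $m$. I would therefore set up a majorant (Cauchy) estimate: track how the tail $L-D$ and its diagonal are transformed under each substitution $x\mapsto y$, show that the analytic norm on a suitably shrinking polydisc decreases geometrically, and conclude convergence on a small neighbourhood of the origin; the delicate point is bounding the nonlinear feedback produced by composing the successive substitutions. As a conceptual check on the endpoint, once convergence is established the limiting coordinates $y_i$ are exactly the eigenvalues of $L$, which are thereby smooth and, since $y_i = x_i + (\text{higher order})$, functionally independent; so the result is consistent with Theorem \ref{thm:4.3} and Corollary \ref{cor:Bols1}, which single out $\mathrm{diag}(y_1,\dots,y_n)$ as the unique Nijenhuis operator with the prescribed functionally independent characteristic coefficients.
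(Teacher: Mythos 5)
Your formal argument is essentially the paper's: you reduce the degree-$m$ component of $\mathcal N_L=0$ to vanishing of the Fr\"olicher--Nijenhuis bracket $[L_{\mathrm{lin}},L_m]_{\mathrm{FN}}=0$, extract from it the divisibility relation $R^i_j=(x_i-x_j)\,\partial R^i_i/\partial x_j$, and observe that the single choice $f_{m,i}=(L_m)^i_i$ solves the homological equation with no division. This is exactly the content of Propositions \ref{prop:forLinear1} and \ref{prop:forLinear2} and the two lemmas inside them, so the formal non-degeneracy of $\mathfrak d^n$ is established the same way and is fine.

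The gap is in the real analytic case. What you offer there is a plan, not a proof: you propose a majorant scheme on shrinking polydiscs and yourself flag the hard point, namely controlling the nonlinear feedback from composing the infinitely many substitutions $x\mapsto y$. That composition is precisely where such Cauchy-estimate arguments require genuine work (one must show the radii do not shrink to zero and that the conjugated tails remain uniformly bounded), and nothing in your text carries this out. The paper avoids the issue entirely by exploiting the observation you relegate to a ``conceptual check'': the formal linearisation \eqref{eq:similarity} says exactly that the formal series $y_k(x)$ are formal roots of the characteristic polynomial $\chi_{L(x)}(t)$, a monic polynomial equation with real analytic coefficients admitting $n$ distinct formal solutions near $x=0$. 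By Artin's approximation theorem such formal roots are automatically real analytic, which yields convergence of \eqref{eq:coord_change} with no estimates at all. So the ingredient you need is already in your last sentence; you should promote it from a consistency check to the actual convergence argument (via Artin), or else supply the full majorant estimates you only sketch.
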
 

\begin{Remark} {\rm In dimension two, the statement of Theorem \ref{thm:lin} remains true in the smooth category \cite{Part3}.  However, we do not know if it is still the case for $k\ge 3$.   
}\end{Remark}

\begin{Remark} {\rm 
We  do not know if the direct sum of two non-degenerate left-symmetric algebras is still non-degenerate and therefore we cannot 
use this obvious idea in our proof.  This general property might be true but we do not see any elementary explanation of it.
}\end{Remark}

\begin{Remark} {\rm 
The geometric meaning of the new coordinates $y_1, \dots, y_n$ is very simple. These are just the eigenvalues of the perturbed operator $L$.  However, the argument that ``under small perturbations the eigenvalues change in a good way'' does not work in this case at all.  The point is that generic (i.e., non-Nijenhuis) perturbations of $L_{\mathrm{lin}}(x)$ immediately destroy all good properties of the eigenvalues,   typically they become multivalued and by no means smooth.  Theorem \ref{thm:lin}  is basically equivalent to the fact that the eigenvalues of $L_{\mathrm{lin}}(x)$ remain smooth under Nijenhuis perturbations.
}\end{Remark}

\begin{proof} We first prove {\it formal} non-degeneracy of $\mathfrak d^n$, or equivalently,  {\it formal} linearisability of $L(x)$,   by killing each term $L_k(x)$ of the perturbation step by step. In other words, we will use the traditional approach based on the following sufficient condition for formal linearisations.  As before, throughout the proof $L_k(x)$ denotes a matrix whose entries are homogeneous polynomials in $x$ of degree $k\ge 2$.
 
We first of all notice that if $L(x) = L_{\mathrm{lin}}(x) + L_k(x) + \dots$ is a Nijenhuis operator, then the  Fr\"olicher--Nijenhuis bracket of $L_{\mathrm{lin}}(x)$ and $L_k(x)$ vanishes  (see Definition \ref{def:FNbracket}).  In this case we will say that $L_{\mathrm{lin}}$  and $L_k$ are {\it compatible}. 
  
\begin{Proposition}
\label{prop:forLinear2} 
 Assume that for any $k$ and for any $L_k$ compatible with $L_{\mathrm{lin}}$ there exists a change of variables 
$$
y_i = x_i + f_{i,k}(x), \quad\mbox{where $f_{i,k}(x)$ is a homogeneaous polinomial in $x$ of degree $k$,}
$$  
that transforms  $L(x)  =  L_{\mathrm{lin}}(x) + L_k(x)$ to the form $L(y)  =  L_{\mathrm{lin}}(y) + L_{k+1}(y)+\dots$ (i.e., kills terms of order $k$). 
Then any Nijenhuis perturbation of $L_{\mathrm{lin}}$ of the form \eqref{eq:perturb} is formally linearisable.
\end{Proposition}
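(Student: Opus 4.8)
The plan is to run the classical step-by-step normalisation scheme (the same that underlies formal linearisation of Poisson structures): granting that the lowest homogeneous term of the perturbation can always be killed, one iterates and checks that the procedure can be repeated indefinitely and that the resulting composition of coordinate changes converges as a formal power series. Concretely, I would argue by induction on the order $N\ge 2$ at which the perturbation begins. The induction hypothesis is that, after finitely many polynomial changes of variables, the operator has been brought to the form $L = L_{\mathrm{lin}} + L_N + (\text{terms of degree} > N)$, where $L_N$ collects the homogeneous entries of degree $N$; the base case $N=2$ is exactly the given perturbation \eqref{eq:perturb}.

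The first key point is that at each step the lowest-order term $L_N$ is automatically \emph{compatible} with $L_{\mathrm{lin}}$, so that the hypothesis of the Proposition genuinely applies. To see this I would decompose the identity $\mathcal N_L=0$ into homogeneous components. Via the polarisation identity of Definition \ref{def:FNbracket}, $\mathcal N_L$ expands as a sum of terms $[L_i,L_j]_{\mathrm{FN}}$; since the Nijenhuis tensor is bilinear in its argument and involves exactly one differentiation, the bracket $[L_i,L_j]_{\mathrm{FN}}$ of homogeneous pieces of degrees $i$ and $j$ is homogeneous of degree $i+j-1$. Collecting the degree-$N$ part of $\mathcal N_L$ for $L = L_{\mathrm{lin}} + L_N + \dots$, the only surviving contribution comes from the pair $(L_{\mathrm{lin}}, L_N)$, as any other pair has total degree strictly greater than $N$ (this is where $N\ge 2$ is used). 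Hence $[L_{\mathrm{lin}}, L_N]_{\mathrm{FN}}=0$, i.e. $L_N$ is compatible with $L_{\mathrm{lin}}$.

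By the assumption of the Proposition there is then a change of variables $y_i = x_i + f_{i,N}(x)$, with $f_{i,N}$ homogeneous of degree $N$, killing the degree-$N$ term of $L_{\mathrm{lin}}+L_N$. I would next verify the degree bookkeeping showing that applying this \emph{same} change to the full operator advances the induction. The Jacobian $\left(\frac{\partial y}{\partial x}\right)$ is $\Id$ plus a term homogeneous of degree $N-1$, and substituting $x = y - f_N(y) + \dots$ into the linear part (whose entries are linear functions) produces corrections of degree $N$ and higher only. Therefore the components of degree $2,\dots,N-1$ remain zero, the degree-$N$ component of the transformed operator coincides with that of the truncated operator $L_{\mathrm{lin}}+L_N$ — which vanishes by construction — while $L_{N+1},\dots$ contribute only to degrees $>N$. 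Since the Nijenhuis condition is a tensor equation and hence diffeomorphism-invariant, the transformed operator is again Nijenhuis, and its perturbation now starts at degree $N+1$.

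Composing the sequence of changes for $N=2,3,\dots$ yields a formal diffeomorphism of the form \eqref{eq:coord_change}: for each fixed degree $d$ its homogeneous part of degree $d$ is no longer altered once $N>d$, so the composition is well defined as a formal power series and conjugates $L$ to $L_{\mathrm{lin}}$ as in \eqref{eq:similarity}. The main obstacle here is not the iteration but the degree accounting guaranteeing both that the lowest term stays compatible at every step and that killing $L_N$ does not disturb the already-normalised lower-order terms; both follow from the single identity $\deg[L_i,L_j]_{\mathrm{FN}} = i+j-1$. The genuinely hard analytic content — actually solving the homological equation for $f_N$, and controlling convergence in the real-analytic category — is precisely what is \emph{assumed} in this Proposition and must be supplied separately in the proof of Theorem \ref{thm:lin}.
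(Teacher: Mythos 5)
Your argument is correct. The paper in fact omits the proof of this Proposition, remarking only that the sufficient condition ``seems to be standard''; what you have written is precisely that standard normalisation argument, and the two points you isolate are exactly the ones that need checking: (i) the lowest-order term $L_N$ of a Nijenhuis perturbation is automatically compatible with $L_{\mathrm{lin}}$, because the degree-$N$ homogeneous component of $\mathcal N_L=0$ reduces to $[L_{\mathrm{lin}},L_N]_{\mathrm{FN}}=0$ via the identity $\deg [L_i,L_j]_{\mathrm{FN}}=i+j-1$ (all other pairs contribute only in degree $>N$ since $N\ge 2$); and (ii) the change of variables supplied by the hypothesis for the truncated operator, applied to the full operator, leaves degrees $<N$ untouched, kills degree $N$, and the resulting composition over $N=2,3,\dots$ stabilises in each fixed degree and hence defines a formal diffeomorphism. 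No gaps.
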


Notice that this statement holds true for any linear NIjenhuis operator, not necessarily the diagonal one $L_{\mathrm{lin}}(x)=\mathrm{diag}(x_1,x_2,\dots, x_n)$.

This sufficient condition for formal linearisability seems to be standard and we omit the proof.  We just want to show that it is fulfilled in our case. Namely, we are going to prove the following

\begin{Proposition}
\label{prop:forLinear1}
Let $R(x)$ be an operator whose entries are homogeneous polynomials of degree $k$ and such that $[L_{\mathrm{lin}}(x), R(x)]_{\mathrm{FN}}=0$. Then there exists a change of variables 
$$
y_i = x_i + f_i(x)
$$
with $f_i$ being homogeneous polynomials of degree $k$ such that the following relation holds:
\begin{equation}
\label{eq:fordn}
\bigl( \Id + J\bigr)^{-1} \Bigl( L_{\mathrm{lin}}(y) \  + \ (\mbox{terms of degree $\ge k+1$} )\Bigr)  \bigl( \Id + J \bigr) =  L_{\mathrm{lin}}(x)+ R(x),
\end{equation}
where $J = \left(\frac{\partial f}{\partial x}\right)$ and $F(x) =\mathrm{diag}\bigl( f_1(x),\dots, f_n(x)\bigl)$.
\end{Proposition}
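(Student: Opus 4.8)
The plan is to linearise one degree at a time via the standard homological-equation approach, so the whole task reduces to solving a single cohomological equation at degree $k$ and then checking that the Fr\"olicher--Nijenhuis compatibility makes it solvable. I would write $y = x + f(x)$ with $f=(f_1,\dots,f_n)$ homogeneous of degree $k$, set $J=(\partial f_i/\partial x_j)$ (entries of degree $k-1$) and $F=\mathrm{diag}(f_1,\dots,f_n)$. Since $L_{\mathrm{lin}}$ is linear in its argument, $L_{\mathrm{lin}}(y)=L_{\mathrm{lin}}(x)+F(x)$ exactly. Using $(\Id+J)^{-1}=\Id-J+J^2-\dots$ and observing that every term carrying a factor $J^2$, or a factor $F$ together with a factor $J$, has degree $\ge k+1$ (for $k\ge 2$), I would expand the left-hand side of \eqref{eq:fordn} and collect the terms of degree $1$ and $k$. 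The degree-$1$ part is simply $L_{\mathrm{lin}}(x)$, while the degree-$k$ part is
$$
F + [L_{\mathrm{lin}}, J], \qquad [L_{\mathrm{lin}}, J]=L_{\mathrm{lin}}J-JL_{\mathrm{lin}}.
$$
Hence \eqref{eq:fordn} holds, with an arbitrary remainder of degree $\ge k+1$, if and only if the \emph{homological equation} $F+[L_{\mathrm{lin}}, J]=R$ is satisfied.

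I would then write this matrix equation in components. Since $(L_{\mathrm{lin}})_{ij}=x_i\delta_{ij}$, one has $[L_{\mathrm{lin}}, J]_{ij}=(x_i-x_j)\,\partial f_i/\partial x_j$, which vanishes on the diagonal. Therefore the diagonal entries of the homological equation read $f_i=R_{ii}$, whereas the off-diagonal entries read
$$
(x_i-x_j)\,\frac{\partial f_i}{\partial x_j}=R_{ij}, \qquad i\neq j.
$$
The crucial feature is that the diagonal relations \emph{force} the unique choice $f_i:=R_{ii}$ with no remaining freedom; these are homogeneous polynomials of degree $k$, so the coordinate change is legitimate. The proposition then hinges on verifying that this forced choice automatically solves the off-diagonal relations, which a priori form an overdetermined system of $n(n-1)$ equations for functions that are already completely pinned down.

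This is exactly where the hypothesis $[L_{\mathrm{lin}}, R]_{\mathrm{FN}}=0$ enters, and it is the main point of the argument. I would expand the Fr\"olicher--Nijenhuis bracket in components by polarising the formula of Definition \ref{def:Nij2}, using $A=L_{\mathrm{lin}}$ with $A^i_j=x_i\delta_{ij}$, $\partial_l A^i_j=\delta_{li}\delta_{ij}$, and $B=R$. After the Kronecker-delta contractions and cancellation, the component with indices $(i,j,i)$, $j\neq i$, collapses to
$$
\bigl([L_{\mathrm{lin}}, R]_{\mathrm{FN}}\bigr)^i_{ji}=(x_j-x_i)\,\frac{\partial R_{ii}}{\partial x_j}+R_{ij}=0,
$$
that is, $R_{ij}=(x_i-x_j)\,\partial R_{ii}/\partial x_j$. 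This is precisely the off-diagonal relation above for $f_i=R_{ii}$, so all off-diagonal equations hold and the homological equation is solved. I expect the only genuinely delicate step to be the bookkeeping in this componentwise expansion of the bracket (keeping track of the eight polarised terms and the contractions coming from $\partial_l A^i_j=\delta_{li}\delta_{ij}$); the rest is routine degree counting. Substituting $f_i=R_{ii}$ back into \eqref{eq:fordn} then gives the claim, with the degree-$(\ge k+1)$ remainder absorbing all higher-order contributions, which is exactly the hypothesis needed to run the inductive scheme of Proposition \ref{prop:forLinear2}.
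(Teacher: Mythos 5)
Your proposal is correct and follows essentially the same route as the paper: reduce \eqref{eq:fordn} to the homological equation $R - F = [L_{\mathrm{lin}}, J]$ by equating degree-$k$ terms, observe that the vanishing of the diagonal of $[L_{\mathrm{lin}}, J]$ forces $f_i = R^i_i$ and turns the off-diagonal part into the condition $R^i_j = (x_i - x_j)\,\partial R^i_i/\partial x_j$, and then derive exactly that identity from $[L_{\mathrm{lin}}, R]_{\mathrm{FN}}=0$ by a componentwise expansion of the bracket. The only cosmetic difference is that you compute the bracket by polarising the coordinate formula of Definition \ref{def:Nij2} while the paper evaluates Definition \ref{def:FNbracket} on the coordinate vector fields $\partial_i,\partial_j$; these give the same relation.
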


\begin{proof}
By equating $k$-order terms in \eqref{eq:fordn},  we obtain
\begin{equation}
\label{eq:2}
R(x) - F(x)= [L_{\mathrm{lin}}(x) , J ] 
\end{equation}
  This system is easy to analyse.

\begin{Lemma}
The change of variables $y_i=x_i+f_i(x)$ with required conditions \eqref{eq:2} exists if and only if the components of $R(x)$ satisfy the relation:
\begin{equation}
\label{eq:3}
R^i_{j} (x) =   (x_i - x_j)  \frac{\partial R^i_{i}}{\partial x_j}.
\end{equation}
If \eqref{eq:3} holds,  this change of variables is unique and takes the form
$$
y_i = x_i + R^i_{i}(x).
$$
\end{Lemma}

\begin{proof}
Assume that \eqref{eq:2} holds. Taking into account the fact that the diagonal elements of  $[ L_{\mathrm{lin}}(x) , J]$ vanish, we come to the condition that  $R^i_{i}(x)$ (diagonal elements of the perturbation) must coincide with $f_1, \dots, f_n$.  In other words, if for a given perturbation $R(x)$  the desired change of variables exists then such a change is unique and is given by the diagonal elements of the perturbation, i.e.
$$
y_i = x_i + R^i_{i}(x).
$$
On the other hand, off-diagonal elements $R^i_{j}$ of $R(x)$ must coincide with the off-diagonal elements of  the (matrix) commutator  $[L_{\mathrm{lin}}(x) , J ]=
L_{\mathrm{lin}}(x) J - J L_{\mathrm{lin}}(x)$, which are of the form $(x_i-x_j) J^i_{j}=(x_i-x_j) \frac{\partial f_i}{\partial x_j}=   (x_i - x_j)  \frac{\partial R^i_{i}}{\partial x_j}$ as stated. Conversely, if \eqref{eq:3} holds, it is easily seen that the change of variable  $y_i = x_i + R^i_{i}(x)$ satisfies \eqref{eq:2}. \end{proof}

It remains to check that condition \eqref{eq:3} follows from to the fact the Fr\"olicher--Nijenhuis bracket of $L_{\mathrm{lin}}(x)$ and $R(x)$ vanishes.

\begin{Lemma}
Let $L_{\mathrm{lin}}$ and $R(x)$ be compatible, i.e.,  $[L_{\mathrm{lin}}, R(x)]_{\mathrm{FN}}=0$, then \eqref{eq:3} holds. 
\end{Lemma}

\begin{proof} Vanishing the Fr\"olicher--Nijenhuis bracket of $L_{\mathrm{lin}}$ and $R$ amounts to the following relations (for all $i\ne j$):
$$
L_{\mathrm{lin}} [R^\alpha_j \partial_\alpha, \partial_i] + L_{\mathrm{lin}} [ \partial_j, R^\alpha_i \partial_\alpha	]  + R[x_i \partial_i , \partial_j] + R[\partial_i , x_j\partial_j]
- [R^\alpha_j \partial_\alpha, x_i\partial_i] -  [ x_j\partial_j, R^\alpha_i \partial_\alpha	] =0,
$$
with summation over $\alpha$ (but not over $i$ and $j$).
A straightforward computation gives:
$$
-\sum_\alpha x_\alpha\frac{\partial R^\alpha_j}{\partial x_i} \partial_\alpha + \sum_\alpha x_\alpha \frac{\partial R^\alpha_i}{\partial x_j} \partial_\alpha + 0 + 0 +
\sum_\alpha x_i \frac{\partial R^\alpha_j}{\partial x_i} \partial_\alpha - R^i_j \partial_i  - 
\sum_\alpha x_j \frac{\partial R^\alpha_i}{\partial x_j} \partial_\alpha + R^j_i \partial_j =0
$$

Taking the $i$-th component of this vector identity, we get:
$$
-x_i\frac{\partial R^i_j}{\partial x_i}  +  x_i \frac{\partial R^i_i}{\partial x_j}  +
x_i \frac{\partial R^i_j}{\partial x_i}  - R^i_j    
- x_j \frac{\partial R^i_i}{\partial x_j} =     (x_i-x_j) \frac{\partial R^i_i}{\partial x_j}  
  - R^i_j  = 0,
$$
which coincides with \eqref{eq:3}.  \end{proof}

These two lemmas complete the proof of Proposition \ref{prop:forLinear1}. \end{proof}

Taking into account the sufficient condition of formal linearisability given by Proposition \ref{prop:forLinear2}, we conclude that $L(x)$ is formally linearisable.   

In the real analytic case, the proof easily follows from the Artin theorem \cite{Artin} which states that if an analytic equation admits a formal solution, then it also admits an analytic solution which is closed to the formal one in natural sense.  In particular, this theorem implies the following fact:  if a polynomial equation
$$
P(y) = y^n + \sigma_{n-1} (x) y^{n-1} + \sigma_{n-2} (x) y^{n-2} + \dots + \sigma_{1} (x) y + \sigma_{0} (x) =0,  \quad x\in \R^n, y\in\R,
$$ 
admits $n$ distinct formal solutions 
$y_k=h_k(x)$  (in the neighbourhood of $x=0$), then these solutions are automatically real analytic in a small neighbourhood of $x=0$.  

Formal linearisability of $L(x)$  means that the new coordinates 
$y_k$ are {\it formal} roots of the characteristic polynomial of the operator $L(x)$  (see formula \eqref{eq:similarity}).  According to the above particular case of the Artin theorem,   $y_k(x)$ are, in fact, real analytic  (in a small neighbourhood of $x=0$),  as required.  This completes the proof in the real analytic case.  \end{proof}

\section{Applications}\label{sect:6}

\subsection{Complex eigenvalues of Nijenhuis operators on closed conected manifolds are always constant}

\begin{Theorem}
\label{appl1}
Let  $L$ be a Nijenhuis operator on a  closed connected  manifold  $M$  with a non-real eigenvalue $\lambda\in \mathbb C\setminus \R$ at least at one point.
Then  this number $\lambda$ is an eigenvalue of $L$ with the same algebraic multiplicity 
at every point of $M$.  
\end{Theorem}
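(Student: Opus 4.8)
The statement is equivalent to showing that the complex number $\lambda_0\in\mathbb{C}\setminus\R$ occurring in the spectrum of $L(p_0)$ stays in the spectrum, with unchanged algebraic multiplicity, at every point of $M$; in terms of the (complex-valued, globally smooth) function $g(x)=\det\bigl(\lambda_0\Id-L(x)\bigr)$ the first half simply says $g\equiv 0$. The only way compactness can enter is through a maximum principle, and the mechanism is furnished by Theorem \ref{thm:comlexcase}: wherever $L$ has no real eigenvalues one obtains a local complex structure in which $L$ is holomorphic and its eigenvalues are \emph{holomorphic} functions, so the imaginary part of a complex eigenvalue is locally pluriharmonic. The plan is to combine this with the splitting theorem (Theorems \ref{thm1}, \ref{th:bols3.2}) to force every non-real eigenvalue to be locally, and then globally, constant.

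The core step is to control the eigenvalue of largest imaginary part. I set $\beta^{\mathrm{top}}(x)=\max\{\im\nu:\nu\in\mathrm{Spectrum}\,L(x)\}$, a continuous function on $M$ with $\beta^{\mathrm{top}}(p_0)\ge|\im\lambda_0|>0$. By compactness $\beta^{\mathrm{top}}$ attains a maximum $\beta^\sharp>0$ at some $x^\sharp$. At $x^\sharp$ each eigenvalue realising $\beta^\sharp$ is non-real and, together with its conjugate (imaginary part near $-\beta^\sharp$), is separated from the remaining eigenvalues; hence by the splitting theorem $L$ restricts, in a neighbourhood of $x^\sharp$, to a block with no real eigenvalues. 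Theorem \ref{thm:comlexcase} then provides a complex structure and a holomorphic eigenvalue $\mu$ with $\im\mu(x^\sharp)=\beta^\sharp$. Since $\im\mu$ is pluriharmonic and $\im\mu(x)\le\beta^{\mathrm{top}}(x)\le\beta^\sharp=\im\mu(x^\sharp)$ nearby, it has an interior maximum at $x^\sharp$; the maximum principle forces $\im\mu\equiv\beta^\sharp$ locally, and a holomorphic function of constant imaginary part is constant, so $\mu\equiv\mu_0:=\mu(x^\sharp)$ near $x^\sharp$. Repeating the argument at every maximum point shows that $\{\beta^{\mathrm{top}}=\beta^\sharp\}$ is open as well as closed, so by connectedness $\beta^{\mathrm{top}}\equiv\beta^\sharp$ on $M$; the eigenvalues realising it then form a globally constant finite set of non-real numbers, each an eigenvalue at every point with locally, hence globally, constant algebraic multiplicity.

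It remains to descend to $\lambda_0$ itself. Having certified the levels above some $c>0$ as globally constant eigenvalues, the function $\max\{\im\nu:\nu\in\mathrm{Spectrum}\,L(x),\ \im\nu<c\}$ is again continuous on the compact $M$; its global maximum lies below $c$ but stays positive until we pass the level $\im\lambda_0=b>0$, and the identical maximum-principle argument makes it a globally constant non-real eigenvalue. Iterating down to the level $b$ captures $\lambda_0$, which is thus an eigenvalue at every point, and constancy of the associated block gives the invariance of its algebraic multiplicity. Crucially the whole argument stays on the compact $M$ and never reaches the real eigenvalues (imaginary part $0$). The main obstacle is the rigorous bookkeeping of this descent: one must ensure that at each level the relevant eigenvalues remain grouped and separated so that the splitting theorem applies, and — most delicately — that the algebraic multiplicity of a holomorphic eigenvalue cannot jump at isolated points, so that the clopen-set argument genuinely yields constant multiplicity; some care is also needed because the complex structure of Theorem \ref{thm:comlexcase} is only local, so the maximum principle has to be applied chart by chart.
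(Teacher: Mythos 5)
Your overall strategy --- maximise the imaginary part of the spectrum over the compact manifold, split off the block of eigenvalues realising the maximum, pass to the canonical complex structure, apply the maximum principle, run a clopen/connectedness argument, and then descend to lower levels --- is exactly the strategy of the paper's proof. But there is a genuine gap at the central step: you invoke ``a holomorphic eigenvalue $\mu$ with $\mathrm{Im}\,\mu(x^\sharp)=\beta^\sharp$''. Theorem \ref{thm:comlexcase} makes the \emph{operator} holomorphic; it does not make its eigenvalues single-valued holomorphic functions. At points where the algebraic multiplicity changes (and nothing in your setup prevents $x^\sharp$ from being such a point) the eigenvalues are branched, as for a holomorphic family with characteristic polynomial $t^2-z$. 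So the function to which you want to apply the maximum principle is not known to exist, and the ``delicate point'' you defer at the end --- that the multiplicity of a holomorphic eigenvalue cannot jump --- is not bookkeeping but the actual mathematical content of the theorem.

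The paper closes this gap with two devices absent from your argument. First, it maximises \emph{twice}: after fixing $\mu_0=\max \mathrm{Im}$, it also maximises the algebraic multiplicity $m_0$ of the eigenvalues attaining $\mu_0$ over the compact set where $\mu_0$ is attained, and the clopen argument is run for the set $\widetilde M_0$ where both maxima are realised; this is what ultimately yields constancy of the multiplicity. Second, and crucially, the maximum principle is applied not to an eigenvalue but to the coefficient $a_1$ of the degree-$m_0$ factor $P(t)=t^{m_0}+a_1t^{m_0-1}+\dots+a_{m_0}$ of the characteristic polynomial produced by the splitting theorem: the coefficients of the characteristic polynomial of a holomorphic operator \emph{are} holomorphic (unlike its roots), and $-\mathrm{Im}\, a_1$ is the sum of the imaginary parts of the $m_0$ eigenvalues in the block, hence bounded by $m_0\mu_0$ with equality at the base point. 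Constancy of $a_1$ then forces every root of $P$ to have imaginary part exactly $\mu_0$, and only \emph{after} that does one conclude --- on the dense open set where the roots are unbranched, hence everywhere by continuity --- that the roots themselves are constant. Your level-by-level descent also differs from the paper's cleaner device of replacing $L$ by the Nijenhuis operator $L'=(L-\lambda_0\cdot\Id)(L-\bar\lambda_0\cdot\Id)$ and iterating; that part of your argument could probably be repaired, but the missing holomorphy argument is the real obstacle.
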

\begin{proof}

Let $k(x)$ be the number of non-real (perhaps, repeated) eigenvalues of $L$ at $x\in M$, and $\lambda_1(x),\dots ,\lambda_{k(x)}(x)$ denote all the corresponding non-real eigenvalues of $L(x)$. Next, define 
$$ 
\mu (x) := \max_{i= 1,\dots ,k(x)} \Im (\lambda_i(x)) \quad\mbox{and}\quad \mu_0:= \max_{x\in M} \mu(x),
$$
where $\Im(\cdot)$ denotes the imaginary part of a complex number.
At the points where $k(x)=0$ we set $\mu(x)=0$. 
Let $M_0\subset M$ be the set of those points where the maximum is achieved:
$$
M_0:= \{x\in M \mid \mu(x)= \mu_0\}.
$$ 
Clearly, $\mu(x)$  is a continuous function on $M$, so 
the compactness of the manifold implies that $M_0$ is not empty and is compact. Next, 
for each point $x\in M_0$, let $\widetilde k(x)\in\mathbb N$ be the number of those eigenvalues of $L(x)$  whose imaginary part equals $\mu_0$.  Denote such eigenvalues by $\lambda_1(x),...,\lambda_{\widetilde k(x)}(x)$ and their algebraic multiplicities by $m_1(x),...,m_{\widetilde k(x)}(x)$.  
Next, consider  the function 
$$
m:M_0\to \mathbb{N},  
 \quad M_0\ni x\mapsto   \left( \max_{i=1,...,\widetilde k(x)} m_i(x) \right), 
 $$
  denote by $m_0$ its maximum and  by $\widetilde M_0 \subset M_0$ the set of points where the maximum is attained. 
  By construction, at every point  $x$  of $M$ and for every eigenvalue $\lambda$ of $L(x)$, 
at least one of the following conditions holds: 
 \begin{enumerate} 
 \item $\Im(\lambda)< \mu_0$ or 
 \item the algebraic  multiplicity of $\lambda$ is not greater than $m_0$. 
 \end{enumerate}   
 Clearly, the set $\widetilde M_0$ is closed. Let us now show that it is open. Take arbitrary   $p\in \widetilde M_0$, and let 
 $\lambda_0$  be an  eigenvalue of $L(p)$ of algebraic multiplicity $m_0$ with $\Im (\lambda_0)= \mu_0$. 
 
  According to Theorems \ref{thm1} and \ref{thm:comlexcase}, there exists a connected neighbourhood $U=U(p)\subset M$ with a coordinate system  $z_1=x_1+ \ii y_1, \dots , z_{m_0}=x_{m_0}+\ii y_{m_0}$, $x_{2m_0+1},\dots,x_{n}$  and a  
  decomposition of the characteristic   polynomial $\chi_L(t)$ in the product  of three   monic polynomials 
 $$ 
 \chi_L(t)= P(t) \cdot \bar P(t) \cdot Q(t) 
 $$
satisfying the following  conditions hold: 
\begin{itemize} 
\item   the coefficients of the polynomial $
   P(t)= t^{m_0}+ a_{1}  t^{m_0-1} + \dots + a_{m_0}$  are independent of $x_{2m_0+1}, \dots ,x_{n}$ and are smooth complex-valued holomorphic  functions of the variables $z_1, \dots , z_m$;
   \item the coefficients of the polynomial $\bar P(t)$ are complex-conjugate to that of $P(t)$; 
\item    the coefficients of the polynomial $Q(t)= t^{n-2m_0}+ b_{1}  t^{n-2m_0-1} + \dots + b_{n-2m_0}$ are smooth real functions of the variables $x_{2m_0+1}, \dots ,x_{n}$;
\item at the point $p\in M$,  the polynomial $P(t)$ is equal to $(t- \lambda_0)^{m_0}$.
  \end{itemize} 
  
  By assumptions, the function $-\Im(a_{1})$ is the sum of imaginary parts of some $m_0$  eigenvalues of $L$  and therefore is at most $m_0\cdot \mu_0$, which implies that it 
   attains its maximum at $p$. Since $a_{1}$ is holomorphic, it is constant on $U$ by the maximum principle. Then the imaginary parts of the  roots of 
   $P(t)$ 
are constants on $U$.  Since the roots of the polynomial $P(t)$ (with holomorphic coefficients) are also holomorphic on an open everywhere dense subset of $U$, this condition implies that these roots themselves are constant on $U$. Hence, $U\subseteq \widetilde M_0$ and therefore $\widetilde M_0$ is open.  

Finally, since $\widetilde M_0$ is non-empty, 
closed and open, it coincides with the whole $M$, so that at every point of $M$ the eigenvalue $\lambda_0$ has multiplicity $m_0$.  Next, consider the operator 
$L':= (L- \lambda_0 \cdot \Id)(L-\bar \lambda_0\cdot \Id ) $. Its Nijenhuis tensor   is zero, and at every point the number of non-real eigenvalues of $L'$ is one less than that of $L$.   If all eigenvalues of $L'$ are real, we are done, since the only non-real eigenvalues  of $L$ are constants $\lambda_0$ and $\bar \lambda_0$, both of algebraic  multiplicity $m_0$. 
Otherwise,   replacing $L$ by $L'$ in all previous considerations, we obtain that one of 
 its complex eigenvalues (say, $\lambda_0'$) is  constant and has constant algebraic multiplicity  at all points   of $M$. Then, we consider $L''=  (L'- \lambda'_0\cdot \Id)(L'-\bar \lambda'_0\cdot \Id) $ and so on; in finitely many steps we come to a Nijenhuis operator with only real eigenvalues. 
Theorem    \ref{appl1} 
  is proved.  \end{proof}

It easily follows from Theorem  \ref{thm:part1:7} that near a differentially non-degenerate singular point there always exist points with non-constant complex eigenvalues. Hence we immediately obtain

\begin{Corollary}\label{cor:Matv1}
 A Nijenhuis operator $L$ on a closed manifold cannot have differentially non-degenerate singular points.
\end{Corollary}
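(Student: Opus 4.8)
The plan is to argue by contradiction, reducing everything to Theorem \ref{appl1}: I will show that near any differentially non-degenerate singular point $p$ the operator $L$ is forced to have a \emph{non-constant} non-real eigenvalue, which Theorem \ref{appl1} forbids on a closed connected manifold. Since a differentially non-degenerate singular point lies in a single connected component of $M$, and that component is itself closed and connected, I may assume from the outset that $M$ is connected.

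First I would apply Theorem \ref{thm:part1:7}. Differential non-degeneracy at $p$ means the coefficients $\sigma_1,\dots,\sigma_n$ of the characteristic polynomial are functionally independent there, so $x_i=-\sigma_i$ serve as local coordinates on a neighbourhood $U(p)$ and $L$ takes the companion form \eqref{eq:14}, whose characteristic polynomial is $\chi_{L(x)}(t)=t^n+\sigma_1 t^{n-1}+\dots+\sigma_n$. In particular the map $x\mapsto(\sigma_1(x),\dots,\sigma_n(x))$ is a local diffeomorphism, so every monic polynomial with coefficients close to those of $\chi_{L(p)}$ is realised as $\chi_{L(q)}$ for some $q\in U(p)$. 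Next I would note that $\chi_{L(p)}$ must have a repeated root over $\mathbb{C}$: the form \eqref{eq:14} is $\mathrm{gl}$-regular at every point of $U(p)$, so its algebraic type is determined solely by the multiplicity pattern of the roots of $\chi$; a point with $n$ distinct roots would be algebraically generic, contradicting the singularity of $p$. Hence some root $\lambda_0$ of $\chi_{L(p)}$ has multiplicity $\ge 2$.

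Because the $\sigma_i$ are free coordinates, I can then choose $q\in U(p)$ arbitrarily close to $p$ whose characteristic polynomial splits this multiple root, crossing the discriminant so that the corresponding factor acquires a pair of simple complex conjugate roots; thus $L(q)$ has a simple non-real eigenvalue $\lambda^\ast$. Since $\lambda^\ast$ is simple, the implicit function theorem yields a smooth complex-valued function $\lambda(x)$ on a neighbourhood $W\subseteq U(p)$ of $q$ with $\chi_{L(x)}(\lambda(x))\equiv 0$, $\lambda(q)=\lambda^\ast$, and $\partial\lambda/\partial\sigma_n=-1/\chi'(\lambda)\ne 0$. As $\sigma_n$ is a genuine coordinate, $\lambda$ is a non-constant non-real eigenvalue on $W$, so its image is an infinite subset of $\mathbb{C}\setminus\mathbb{R}$. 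But Theorem \ref{appl1} asserts that each such non-real value is an eigenvalue of $L$ at \emph{every} point of $M$; evaluating at the single point $q$ would then force $L(q)$ to have infinitely many eigenvalues, which is absurd. This contradiction establishes the corollary.

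The step deserving the most care is the passage from the multiple root at $p$ to a genuinely non-constant non-real eigenvalue nearby. One must check two things: that the discriminant can be crossed into the complex region using only \emph{admissible} perturbations, which is exactly what functional independence of $\sigma_1,\dots,\sigma_n$ guarantees; and that the resulting non-real root is \emph{simple}, so that it varies smoothly and non-trivially with the coordinates rather than remaining pinned. Everything else is a direct combination of Theorems \ref{thm:part1:7} and \ref{appl1}.
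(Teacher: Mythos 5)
Your argument is correct and is exactly the paper's route: the paper disposes of this corollary in one line by observing that Theorem \ref{thm:part1:7} forces points with non-constant complex eigenvalues near any differentially non-degenerate singular point, and then invoking Theorem \ref{appl1}. Your write-up simply supplies the details of that observation (singularity of a $\mathrm{gl}$-regular companion form forces a multiple root of $\chi_{L(p)}$, and the free coordinates $\sigma_i$ let one split it into a simple, hence smoothly and non-trivially varying, non-real pair), all of which checks out.
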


The next corollary shows that the topology of a manifold carrying a Nijenhuis operator $L$ may ``affect'' the spectrum of $L$.
  
  \begin{Corollary}\label{cor:Matv2}
   The eigenvalues of a Nijenhuis operator on the 4-dimensional sphere $S^4$  are all real.
  \end{Corollary}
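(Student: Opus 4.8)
The plan is to argue by contradiction, combining Theorem \ref{appl1} with the topology of $S^4$. Suppose that $L$ has a non-real eigenvalue at some point of $S^4$. By Theorem \ref{appl1} every non-real eigenvalue of $L$ is a fixed complex number occurring with the same algebraic multiplicity at each point; iterating the reduction carried out at the end of the proof of Theorem \ref{appl1} (passing from $L$ to $(L-\lambda_0\cdot\Id)(L-\bar\lambda_0\cdot\Id)$), I would conclude that all non-real eigenvalues, together with their multiplicities, are constant on $S^4$. Collecting the corresponding factors, I then write the characteristic polynomial globally as $\chi_L(t)=\chi_1(t)\,\chi_2(t)$, where $\chi_1$ is the product of the factors $(t-\lambda)(t-\bar\lambda)$ over the non-real eigenvalues and $\chi_2$ collects the real ones. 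Since $\chi_1$ has no real roots while all roots of $\chi_2$ are real, this is an admissible factorisation in the sense of Example \ref{ex:Bols2}, and it is valid on the whole of $S^4$.

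Following Example \ref{ex:Bols2} and Theorem \ref{thm1}, the distribution $\mathcal D_1=\Ker\chi_1(L)$ is a globally defined smooth $L$-invariant subbundle of $TS^4$ of constant even rank $2k$ with $k\ge 1$, and $TS^4=\mathcal D_1\oplus\mathcal D_2$ with $\mathcal D_2=\Ker\chi_2(L)$. On $\mathcal D_1$ the restriction $L|_{\mathcal D_1}$ has no real eigenvalues, so the construction of Section \ref{analytic} and Proposition \ref{prop:3.2}, applied to $L|_{\mathcal D_1}$, produces a smooth bundle endomorphism $J=f(L|_{\mathcal D_1})$ with $J^2=-\Id$. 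Hence $\mathcal D_1$ is a complex, in particular oriented, subbundle of even rank $2k\le 4$; since $TS^4$ is orientable, $\mathcal D_2$ is oriented as well.

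Only two cases then remain. If $2k=4$, then $\mathcal D_1=TS^4$ and $J$ is an almost complex structure on $S^4$, which is impossible by the classical theorem of Ehresmann--Hopf--Wu that $S^4$ carries no almost complex structure. If $2k=2$, then $TS^4=\mathcal D_1\oplus\mathcal D_2$ is a Whitney sum of two oriented rank-$2$ bundles, so its Euler class factorises as $e(TS^4)=e(\mathcal D_1)\smile e(\mathcal D_2)$ with both factors lying in $H^2(S^4)=0$; hence $e(TS^4)=0$, contradicting $\langle e(TS^4),[S^4]\rangle=\chi(S^4)=2$. In either case we reach a contradiction, so $L$ can have no non-real eigenvalue, which is the assertion.

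The step I expect to be the crux is the second one: organising the entire non-real part of the spectrum into a single globally defined, constant-rank, $L$-invariant subbundle $\mathcal D_1$ of even rank carrying a fibrewise complex structure. This is precisely what the constancy statement of Theorem \ref{appl1} and the admissible-factorisation/projector machinery of Example \ref{ex:Bols2} are designed to provide, and once $\mathcal D_1$ is in place the two topological inputs (nonexistence of an almost complex structure on $S^4$ and the multiplicativity of the Euler class) finish the argument. The only mild care needed is to check that $\chi_1(L)$ has globally constant rank, which follows from the constancy of the non-real eigenvalues and their multiplicities.
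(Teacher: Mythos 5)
Your proof is correct, and it follows the paper's strategy for the first half (reduce via Theorem \ref{appl1} to constant non-real eigenvalues, then split off the case where $L$ has no real eigenvalues and invoke the non-existence of an almost complex structure on $S^4$), but it diverges in the remaining case. Where you have a proper rank-$2$ subbundle $\mathcal D_1=\Ker\chi_1(L)$, the paper instead picks an auxiliary Riemannian metric, takes the orthogonal complement $\mathcal D^\perp$ of $\mathcal D=\Ker(L^2+\Id)$, and assembles a global almost complex structure on all of $S^4$ by using $L$ on $\mathcal D$ and a $\tfrac{\pi}{2}$-rotation on $\mathcal D^\perp$, so that both cases are killed by the same topological fact (Steenrod's theorem that $S^4$ admits no almost complex structure). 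You avoid the metric and the rotation construction entirely: once $TS^4=\mathcal D_1\oplus\mathcal D_2$ is a Whitney sum of two oriented rank-$2$ bundles, multiplicativity of the Euler class together with $H^2(S^4)=0$ forces $e(TS^4)=0$, contradicting $\chi(S^4)=2$. Your route is cleaner and more self-contained in that case (no auxiliary choices), at the cost of importing characteristic-class machinery; the paper's route needs only one topological input but requires verifying that the glued endomorphism squares to $-\Id$ and is well defined. One small point worth making explicit in your write-up: $\mathcal D_2$ inherits an orientation from those of $TS^4$ and $\mathcal D_1$ (via $w_1(\mathcal D_1\oplus\mathcal D_2)=w_1(\mathcal D_1)+w_1(\mathcal D_2)=0$ and a choice of compatible orientation), which is exactly what the Euler class product formula requires.
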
 

\begin{proof} 
Let $L$ be a Nijenhuis operator on $S^4$ with a complex eigenvalue $\lambda$ at a point $x$ 
  which without loss of generality can be assumed to be equal to $\ii= \sqrt{-1}$ (otherwise replace $L$ by
   its  appropriate linear combination with $\mathrm{Id}$). Then, by Theorem \ref{appl1} at every point of $S^4$ the numbers 
  $\ii$ and $-\ii$ are eigenvalues of $L$.

 If $ \ii$ has algebraic multiplicity $2$ at $x$,  then $\ii$ and $-\ii$ have  algebraic multiplicity $2$ at every point of $S^4$ and therefore $L$ is a Nijenhuis operator  with no real eigenvalues.   According to Theorem \ref{thm:comlexcase}, in this case $L$ induces a natural complex structure $J$ on $S^4$ which is impossible.

Let us consider the case when  $\ii $ has algebraic multiplicity  $1$, and    construct an almost complex structure on $S^4$.
 Consider any Riemannian metric  $g$  on $S^4$,  the 2-dimensional distribution $\mathcal D= \Ker\left(L^2 + \mathrm{Id} \right)$ and its orthogonal complement $\mathcal D^\perp$. 
 
 Next, define an almost complex structure $J$ as follows: we  choose  an orientation 4-form $\omega$  and 
 for the vector $v= u + u^\perp$, where $u\in \mathcal D$ and $u^\perp\in \mathcal D^\perp$,  we put $J(v)= L(u) +  R(u^\perp)$, where $R$ is the $\tfrac{\pi}{2}$-rotation in the 2-plane $\mathcal D^\perp$ in positive direction. More precisely, 
$R(u^\perp)$ lies in $\mathcal D^\perp$, is orthogonal to $u^\perp$, has the same length as $u^\perp$, and $\omega(u, L(u), u^\perp , R(u^\perp))\ge 0  $  for any nonzero $u$.  Clearly, $J^2=-\Id$, so it is an almost complex structure.
  
  Now, by the classical result of Steenrod \cite[41.20]{steenrod}, the 4-sphere does not admit any almost complex structure. This contradiction completes the proof. 
\end{proof}

\subsection{Geodesically equivalent metrics near  differentially \\ non-degenerate singular points} \label{sect:geod}

Recall that two pseudo-Riemannian metrics $g$ and $\bar g$ on $M$ are \emph{geodesically equivalent}, if any $g$-geodesic  is a $\bar g$-geodesic. In this definition we consider geodesics without preferred parameterization. Geodesically equivalent metrics is a classical topic and appeared already in the papers of E. Beltrami, T. Levi-Civita and H. Weyl;  it has a revival in the last decades  due to new methods coming from the theory of integrable systems and  parabolic geometry which led to solving a series of  named  and natural problems.

Let us recall, following \cite{BoMa2003},  the relation between geodesically equivalent metrics and Nijenhuis operators.  
 Consider the operator  $L=L(g,\bar g)$ defined by
  
\begin{equation}
\label{L}
L_j^i := { \left|\frac{\det(\bar g)}{\det(g)}\right|^{\frac{1}{n+1}}} \bar g^{ik}
 g_{kj}. 
\end{equation}

By  \cite[Theorem 1]{BoMa2003},   if the metrics $g$ and $\bar g$ are geodesically equivalent, 
then $L$ is a Nijenhuis operator.  Clearly, the pair $(g, L)$ contains as much information as the pair $(g, \bar g)$ so that the geodesic equivalence condition for $g$ and $\bar g$ can be written as an equation for $g$ and $L$.  The compact form of  this equation is  
\begin{equation} \label{b1} 
\{ H, F \} = 2 H \ell . \end{equation}

Here $\{\  , \  \}$ is the canonical Poisson brackets on $T^*M$, 
$H$ is the hamiltonian of the geodesic flow of $g$,  $H:= \tfrac{1}{2}  g^{ij}(x) p_ip_j$,  $F$ is the quadratic is momenta function given by $F= L^i_k g^{kj} p_i p_j$,  and
$\ell$ is the linear in momenta function corresponding to the differential of the trace of $L$, 
$
\ell = \sum_i \frac{\partial \trace(L)}{\partial x_i}p_i,
$ 
where $(x_1,\dots, x_n, \, p_1,\dots, p_n)$ are usual canonical coordinates on $T^*M$.  

If \eqref{b1} holds for  a $g$-selfadjoint Nijenhuis operator $L$, we say that $g$ and $L$ are {\it geodesically compatible}\footnote{Notice that $L$ defined by \eqref{L}  should, in addition, be non-degenerate, but  we may temporarily ignore this condition as \eqref{b1} is preserved under shifts $L \mapsto L + c\cdot \Id$, $c\in\R$.}.
It is natural to ask if a given Nijenhuis operator $L$ admits a geodesically compatible metric $g$ (not necessarily positive definite)? 
 
For diagonalisable Nijenhuis operators with $n$ distinct eigenvalues, the answer is positive: there always exists two geodesically equivalent metrics $g$ and $\bar g$ such that $L$ is given by \eqref{L}. Indeed, in this case and under the additional assumption that the eigenvalues are real,   there exists  a coordinate system  such that $L= \textrm{diag}(\lambda_1(x_1),...,\lambda_n(x_n))$, and the metrics  $g$ and $\bar g$ are as follows: 
\begin{eqnarray} 
g&=& \sum_{i=1}^n\left(\varepsilon_i\prod_{j=1; j \ne i}^n (\lambda_i- \lambda_j)\right)\ddd x_i^2 \label{eq:LC}\\ 
 \bar g&=& \sum_{i=1}^n\left(\frac{\varepsilon_i}{\lambda_i^2}\prod_{j=1; j \ne i}^n \tfrac{1}{\lambda_j}(\lambda_i- \lambda_j)\right)\ddd x_i^2, 
\end{eqnarray} 
where $\varepsilon_i\in \{-1,1\}$.  This result was  obtained by Levi-Civita  \cite{Levi-Civita} in 1896. Generalization of this result to the mixed case with real and complex eigenvalues (but still distinct) is also known, see e.g. \cite{BoMa2015} (special cases were known long before this paper).    

Next assume that a Nijenhuis operator $L$ is algebraically generic  (Definition \ref{def:alggen}).  This case is also known and the answer is as follows: 
{\it If $L$ admits a  geodesically compatible $g$, then the geometric multiplicity of any nonconstant eigenvalue of $L$ is one.} In particular,   
for a semisimple  Nijenhuis operator $L$,  a geodesically compatible metric $g$ exists
if and only if all eigenvalues of multiplicity greater than $2$ are constant.  The direction ``$\Longrightarrow$'' is proved in  \cite{BoMa2011}, while the direction ``$\Longleftarrow$'' follows from \cite{BoMa2015}.  

However, if we consider a Nijenhuis operator $L$ near a singular point $p\in M$, then existence problem for a projectively equivalent partner remains widely open. Only the Riemannian case under  the additional  assumption that the metrics are strictly-nonproportional is fully understood \cite{matvadd}.
 In almost all known global examples, such points do appear and, moreover,  turn out to be interesting from geometric viewpoint  (e.g., ombilic points on ellipsoids).  It is expected that that classification of such points may help to describe the topology of closed manifolds admitting projectively equivalent metrics and to prove the projective Lichnerowicz conjecture for all signatures.

The main results of this section,  Proposition \ref{prop:1} and Theorem  \ref{thm:ra} below,  concern  differentially non-degenerate singular points (Definition \ref{def:nondeg}).  In this case the answer is positive: there locally exists a metric geodesically compatible with $L$. Indeed, as  shown in Theorem \ref{thm:part1:7}, in this case there exists a coordinate system such that $L$ is given by
\begin{equation} \label{b2}
L = \begin{pmatrix}
x_1 & 1 &  & & \\
x_2 & 0 & 1 & & \\
\vdots & \vdots & \ddots & \ddots & \\
x_{n-1} &  0 & \dots & 0 & 1\\
x_n & 0  & \dots & 0 & 0 
\end{pmatrix},
\end{equation} 
and we have

\begin{Proposition}\label{prop:1} 
The metric $g$ whose  dual (=inverse) metric $\bigl(g^{ij}\bigr)= g^{-1}$  is  given by 
  \begin{equation} \label{metricg}
g^{-1}= 
\begin{pmatrix}    
0 & \cdots & 0 & 0 & \!\!\!\! -1\\ 
0 &  \cdots & 0  &\! -1 & x_1\\ 
\vdots & \iddots &\iddots & \iddots  &x_{{2}} \\ 
0&\!\!-1&x_{{1}}& \iddots &   \vdots    \\
-1&x_{{1}}&x_{{2}}  &\cdots & \!\!\! x_{{n-1}}
  \end{pmatrix}  
\end{equation}    
is geodesically compatible with $L$ given by \eqref{b2}.

\end{Proposition}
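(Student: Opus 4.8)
The plan is to verify the geodesic compatibility equation \eqref{b1} directly, exploiting the fact that in the Poisson form $\{H,F\}=2H\ell$ everything is expressed through the \emph{contravariant} metric $g^{-1}=(g^{ij})$ and the operator $L$, and never through the covariant metric $g$. Indeed $H=\tfrac12 g^{ij}p_ip_j$ and $F=L^i_kg^{kj}p_ip_j$, while for \eqref{b2} the coefficients of the characteristic polynomial are $\sigma_k=-x_k$, so $\operatorname{tr}L=x_1$ and hence $\ell=\sum_i\partial_{x_i}(\operatorname{tr}L)\,p_i=p_1$. Thus \eqref{b1} reduces to the single identity $\{H,F\}=2Hp_1$, to be checked using only the explicit data \eqref{b2} and \eqref{metricg}; no matrix inversion is needed. (Recall that $L$ is automatically Nijenhuis by Theorem \ref{thm:part1:7}.)

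First I would settle the $g$-selfadjointness of $L$, which is part of the notion of geodesic compatibility and is what makes $F$ a genuine quadratic form. Writing the Hankel entries of \eqref{metricg} uniformly as $g^{ij}=-\sigma_{i+j-n-1}$ (with $\sigma_0=1$ and $\sigma_{<0}=0$), and using that $L$ has first column $L^i_1=x_i$ and superdiagonal $L^i_{i+1}=1$, one computes $B^{ij}:=(Lg^{-1})^{ij}=-x_i\delta_{jn}-[i\le n-1]\,\sigma_{i+j-n}$. The $\sigma$-band here depends only on $i+j$ and is therefore symmetric, while the two asymmetric contributions cancel precisely because $\sigma_i=-x_i$; hence $B$ is symmetric, i.e.\ $L$ is $g$-selfadjoint, and $F=B^{ij}p_ip_j$.

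With this in hand I would expand the left-hand side as a cubic form in the momenta,
\begin{equation*}
\{H,F\}=\bigl(g^{kl}\,\partial_{x_k}B^{ij}-B^{kl}\,\partial_{x_k}g^{ij}\bigr)p_ip_jp_l,
\end{equation*}
and compare it, after symmetrisation in $(i,j,l)$, with $2Hp_1=g^{ij}p_ip_jp_1$. Since every entry of $g^{-1}$ and of $B$ is affine in $x$, all the derivatives are constant --- explicitly $\partial_{x_m}g^{ij}=\delta_{m,\,i+j-n-1}$ and $\partial_{x_m}B^{ij}=-\delta_{im}\delta_{jn}+[i\le n-1]\,\delta_{m,\,i+j-n}$ --- so the identity collapses to a finite combinatorial statement about the Hankel bands, which I would confirm by matching coefficients of each monomial $p_ip_jp_l$.

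The main obstacle is precisely this last bookkeeping step: one must control which anti-diagonals of $g^{-1}$ and $B$ are nonzero and, above all, the boundary contributions coming from the differentiated first column $L^i_1=x_i$ and from the edge where the superdiagonal of $L$ is absent --- these are the terms that must conspire to reproduce the single factor $p_1$ on the right-hand side while every other cubic monomial cancels. Should the direct matching become unwieldy, an equivalent but more structural route is to encode $g^{-1}$ through the generating function $\sum_{i,j}g^{ij}s^{i-1}t^{j-1}$ determined by the coefficients of $\chi(t)$ and to rewrite \eqref{b1} as an identity between resolvent expressions in $(tI-L)^{-1}$, which makes the cancellations transparent.
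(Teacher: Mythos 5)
Your proposal is correct and takes essentially the same approach as the paper: the paper's proof is literally ``left to the reader: one needs to check by direct calculations that the operator \eqref{b2} is $g$-selfadjoint and \eqref{b1} is satisfied,'' which is exactly the verification you outline. In fact you supply more detail than the paper does (the Hankel form $g^{ij}=-\sigma_{i+j-n-1}$, the reduction $\ell=p_1$, and the explicit cancellation giving selfadjointness), so your write-up is, if anything, a more complete version of the intended argument.
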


The proof is left to the reader: one needs to check by direct calculations that the operator  \eqref{b2} is $g$-selfadjoint and \eqref{b1}  is satisfied.

Recall that one metric $g$  geodesically compatible with  $ L$ allows us to construct infinitely many metrics $g_f$ geodesically compatible to $L$. This construction is essentially due to Topalov \cite{To}, its special case was known to Sinjukov \cite{Si1}, and  it is described in detail in \cite[\S 1.3]{BoMa2011}.  Given geodesically compatible pair $(g, L)$ and an analytic matrix function in the sense of Section \ref{analytic}, one constructs a metric $g_f$ geodesically compatible to the same $L$ by setting $g_f (\xi, \eta) = g\bigl( f(L)\xi, \eta\bigr)$ or shortly $g_f=g f(L)$.

The main result of this section is a description of all real analytic metrics compatible with $L$ near  a differentially non-degenerate singular point. 

\begin{Theorem} \label{thm:ra} Let $g'$ and $L$ be geodesically compatible and real analytic. Assume  that $L$ is  differentially non-degenerate at a point $p\in M$ and $L(p)$ is conjugate to the $n\times  n$ Jordan block with zero eigenvalue. Then there exists a real analytic  function $f: (-\varepsilon, \varepsilon) \to \R$ such that in some local coordinate system $x_1,\dots, x_n$ centered at $p\in M$, the operator $L$ is given by \eqref{b2}  and $g'=g_f$, where $g$ is given by \eqref{metricg}.   
\end{Theorem}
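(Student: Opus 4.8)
The plan is to reduce everything to the companion normal form, to identify the metrics compatible with $L$ with Levi--Civita (St\"ackel) metrics in separation coordinates, and then to use the branching of the eigenvalues at the singular point to collapse the a priori $n$-functional freedom down to a single function $f$. First I would fix the canonical coordinates: by Theorem \ref{thm:part1:7}, differential non-degeneracy together with $L(p)$ being a single nilpotent Jordan block lets us choose coordinates $x_1,\dots,x_n$ in which $L$ has the companion form \eqref{b2}; by Proposition \ref{prop:1} the metric $g$ of \eqref{metricg} is then geodesically compatible with $L$, and so is every $g_f=g\,f(L)$ by the Topalov construction recalled above. It remains to show that an arbitrary real analytic compatible $g'$ equals some $g_f$. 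Set $A:=g^{-1}g'$, a real analytic $(1,1)$-tensor. Since $L$ is self-adjoint with respect to both $g$ and $g'$, one has $L^*=gLg^{-1}=g'L(g')^{-1}$, whence $(g')^{-1}g$, and therefore its inverse $A$, commutes with $L$; thus $A$ lies in the centraliser of $L$. As the companion operator \eqref{b2} is $\mathrm{gl}$-regular at every point, this centraliser consists exactly of polynomials in $L$, so $A=\sum_{k=0}^{n-1}c_k(x)L^k$ with real analytic coefficients $c_k$. Proving $g'=g_f$ is therefore equivalent to proving that $A=f(L)$ for a single function $f$ of one variable.

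Next I would move to the open dense set where $L$ has $n$ distinct eigenvalues and pass to the eigenvalues $\lambda_1,\dots,\lambda_n$ as separation coordinates (using the canonical complex structure of Theorem \ref{thm:comlexcase} and the splitting of Theorem \ref{thm1} wherever the eigenvalues are complex). There $L$ is diagonal, and by the Levi--Civita normal form \eqref{eq:LC} together with its mixed real/complex extension \cite{BoMa2015}, every metric geodesically compatible with $L$ has the form $\sum_i\theta_i(\lambda_i)\,\Delta_i\,\ddd\lambda_i^2$ with $\Delta_i=\prod_{j\neq i}(\lambda_i-\lambda_j)$ and single-variable functions $\theta_i$. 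Applying this to $g$ (with $\theta_i$ equal to a constant) and to $g'$ (with $\theta_i=\theta_i'$), the tensor $A=g^{-1}g'$ becomes diagonal with $i$-th entry $\theta_i'(\lambda_i)/\theta_i(\lambda_i)$, a function of $\lambda_i$ alone. On this generic set the $\theta_i'$ may a priori be \emph{different} functions, so compatible metrics there form an $n$-functional family strictly larger than $\{g_f\}$; the whole content of the theorem is that this cannot happen near the singular point.

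The decisive step is to collapse these $n$ functions to one. Because $p$ is differentially non-degenerate with $L(p)$ a single Jordan block, the coefficients $\sigma_1,\dots,\sigma_n$ of $\chi_L$ are local coordinates vanishing at $p$, so near $p$ the eigenvalues $\lambda_i$ are the branches of the roots of a monic polynomial whose coefficients vary independently about $0$; consequently the monodromy of $(\lambda_1,\dots,\lambda_n)$ around the discriminant is the full symmetric group $S_n$, in particular transitive. Since $g$ and $g'$ are genuine single-valued analytic tensor fields, the families $\{\theta_i\}$ and $\{\theta_i'\}$ must be invariant under this monodromy, which forces all $\theta_i$ to coincide with one analytic function and all $\theta_i'$ with another. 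Writing $f:=\theta'/\theta$, we obtain $A=f(L)$ on the generic set, hence $g'=g\,f(L)=g_f$; by analyticity this identity extends across $p$ to the whole neighbourhood, and $f$ is real analytic on a small interval $(-\varepsilon,\varepsilon)$ around the common eigenvalue value $0$.

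I expect the monodromy collapse of the third paragraph to be the main obstacle: one must rigorously argue that single-valuedness of the analytic metric forces the per-eigenvalue functions to agree, carefully treat the eigenvalues that become complex near $p$ (working over $\mathbb C$ via Section \ref{noreal} and recovering reality of $f$ from that of $g$ and $g'$), and justify that the relation derived on the open dense generic set propagates analytically across the singular locus. By contrast, verifying that the centraliser coefficients $c_k$ are genuinely analytic (immediate from the companion structure of \eqref{b2}) and that $g$ of \eqref{metricg} has constant $\theta_i$ in separation coordinates are routine checks.
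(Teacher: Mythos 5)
Your proposal is correct in outline and shares the skeleton of the paper's argument: reduce to the companion form \eqref{b2} via Theorem \ref{thm:part1:7}, pass to the open dense set where $L$ has $n$ distinct real eigenvalues, invoke the Levi--Civita form \eqref{eq:LC} to write the compatible metric with $n$ single-variable functions, use $\mathrm{gl}$-regularity to express $M=g^{-1}g'$ as a polynomial in $L$ with analytic coefficients, collapse the $n$ functions to one, and extend by real analyticity. The decisive collapsing step, however, is carried out by a genuinely different mechanism. You complexify and argue that the monodromy of the eigenvalue branches around the discriminant is the full symmetric group (since $\sigma_1,\dots,\sigma_n$ are independent coordinates near $p$), so single-valuedness of the analytic tensor $g'$ forces all the per-eigenvalue functions $\theta_i'$ to coincide. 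The paper stays entirely in the real category and on the single chamber $\phi(Y)=\{-\varepsilon<y_1<\dots<y_n<\varepsilon\}$: it observes that the coefficients $a_k$ in $M=\sum a_k L^k$ are symmetric functions of the eigenvalues and extend analytically (hence boundedly) across the collision locus, so the identity $f_i(y_i)-f_{i+1}(y_{i+1})=\sum_k(y_i^k-y_{i+1}^k)\,a_k(\phi(y))$ yields $f_i=f_{i+1}$ simply by letting $y_i\to y_{i+1}$. Your route buys a conceptually transparent reason for the collapse (transitive monodromy) and would generalise to situations where one already works over $\mathbb{C}$, but it obliges you to holomorphically continue the Levi--Civita identity into $\mathbb{C}^n\setminus\{\text{discriminant}\}$ and to verify that the continuation acts on the $\theta_i'$ by the corresponding permutation; the paper's limit argument avoids complexification altogether and only needs boundedness of the globally defined coefficients $a_k$. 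One small remark: your appeal to Theorem \ref{thm:comlexcase} and Theorem \ref{thm1} for "eigenvalues that become complex" is a slight misdirection --- what is actually needed is the holomorphic extension of the ambient coordinates and tensors to a complex neighbourhood of $p$, not the pointwise complex structure on the real manifold; with that adjustment your argument closes.
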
 

Examples show that the assumption that the metric is real analytic  is important. 

\begin{proof}  
The existence of coordinates  such that  $L$ is given by \eqref{b2}  was proved in Theorem \ref{thm:part1:7}, moreover such a coordinate system is unique and real analytic.   We can assume therefore that we work on $U\subseteq \mathbb{R}^n$ with $(x_1,...,x_n)$ being Cartesian coordinates, $(0,...,0)\in U$ and $L$ is as in \eqref{b2}.  We need to show that $g'= g_f$ for a real analytic function $f$. Because of real analyticity,  it is  sufficient to prove the last statement in any open subset $V \subset U$.  Let us choose such a subset $V$. 

Consider  the mapping 
\begin{equation}
\label{eq:phi}  
\phi:\mathbb{R}^n\to \mathbb{R}^n, \  
\phi(y)= \bigl(\sigma_1(y),\sigma_2(y),...,\sigma_n(y)\bigr),
\end{equation}
 where $\sigma_k$ is the elementary symmetric polynomial of  degree $k$.   By implicit function theorem, $\phi$ is a local  diffeomorphism near every  point $(y_1,...,y_n)$  such that  $y_i$ are all distinct.

Next, take  a sufficiently small positive $\varepsilon$ and  
consider the set $Y:= \{(y_1,...,y_n)\in \mathbb{R}^n \mid -\varepsilon<y_1<...<y_n <\varepsilon \}$.
As an open subset where we prove that $g'= g_f$,  we  choose $V=U\cap \phi(Y)$. Note that  $\phi$ is injective on $Y$, since the coefficients of the polynomial 
$t^n - \sigma_1 t^{n-1} + \sigma_2 t^{n-2} - ... + (-1)^n \sigma_n$  
determine its ordered zeros $y_1 < ... < y_n$ uniquely.

 By construction, at every point $\phi(y_1,...,y_n)$ of this neighbourhood,  
$L$ has $n=\dim M$ distinct real eigenvalues $-\varepsilon<y_1<...<y_n<\varepsilon$. Therefore, in coordinates $y_1,...,y_n$ the operator $L$ is $\operatorname{diag}(y_1,...,y_n)$  and the  metric $g'$    has the following  form  (which one  immediately obtains  from \eqref{eq:LC} by  passing to the coordinates $y_1,...,y_n$ that are (ordered) eigenvalues of $L$)    
\begin{equation}
\label{gstrich}
g'= \sum_{i=1}^n \left(  f_i(y_i) \prod_{j=1; j\ne i}^n  (y_i- y_j)\right)\ddd y_i^2.    
\end{equation} 
Here $f_i$ are some functions of one variable; since the metric is real analytic, they are also real analytic.

Next,  consider the $(1,1)$--tensor $M$ given by the following formula:
\begin{equation}\label{eq:M} 
M=g^{-1} g' = g^{is}g'_{sj}.
\end{equation}
By direct calculations we see  that in coordinates $y_1,..., y_n$, the metric $g$ given by \eqref{metricg} takes the form
\begin{equation}\label{eq:gflat}   g= \sum_{i=1}^n   \left( \prod_{j=1; j\ne i}^n  (y_i- y_j)\right)dy_i^2.    
\end{equation}
Comparing formulas \eqref{eq:M} and  \eqref{eq:gflat}, we see that $M$ is given by  the diagonal matrix 
\begin{equation}
\label{eq:m} 
M= \operatorname{diag}\bigl(f_1(y_1),...,f_n(y_n)\bigr),
\end{equation}  
and therefore commutes with $L$. Since our objects are all real analytic, the operators $L$ and $M$ commute at every point of $U$.

Next, observe that  $L$ is $\mathrm{gl}$-regular on $U$  in the sense of Definition \ref{def:algregular}. This implies that $M$ is a polynomial in $L$ of degree $\le n-1$ whose coefficients  $a_i$ are functions on $U$:
\begin{equation}\label{eq:pol}
M= a_{n-1}(x)L^{n-1} +...+ a_1(x)L + a_0(x) \, \Id.
\end{equation}
Notice that the matrices $L^{n-1},...,L, \Id$ are linearly independent. Therefore the coefficients $a_i$ are unique and real analytic in $x$ on $U$.  Since the eigenvalues of $M$ at the  point $\phi(y_1,...,y_n)$ are $f_1(y_1),...,f_n(y_n)$ and the eigenvalues of $L$ at the  point $\phi(y_1,...,y_n)$ are $y_1,...,y_n$, we obtain the following formula: 
\begin{equation}\label{eq:rel}
\left(\begin{array}{c}f_1\\ \vdots  \\f_n\end{array} \right)= \left(\begin{array}{ccc} y_1^{n-1} & \cdots & 1 \\ \vdots & & \vdots \\   y_n^{n-1} & \cdots & 1 \end{array}\right)\left(\begin{array}{c}a_{n-1}\left(\phi(y_1,...,y_n)\right)\\    \vdots  \\a_{0}\left(\phi(y_1,...,y_n)\right)\end{array}\right). 
\end{equation}
Note that the $f_i(y_i)$-eigenvector of $M$ coincides with the $y_i$-eigenvector of $L$ at every point $\phi(y_1,...,y_n)$ and each of the functions $f_i$ is defined on $(-\varepsilon, \varepsilon)$.

In view of \eqref{eq:phi}, the functions $a_i(\phi(y_1,...,y_n))$ are symmetric in   $y_1,...,y_n$. This implies that all the functions $f_i$ coincide, 
i.e.,
 $f_i(y)= f(y)$ for some real analytic function $f$  defined on  $(-\varepsilon, \varepsilon)$. (Indeed,  from \eqref{eq:rel} we see that 
$$
f_i(y_i) - f_{i+1}(y_{i+1}) = \sum_{k=0}^{n-1} (y_i^{k} - y_{i+1}^{k}) a_k(\phi(y_1,\dots,y_n))
$$
and then  can take the limit as $y_i \to y_{i+1}$  (keeping all the other $y_k$'s fixed). Since the functions $a_k$ are bounded, we get $f_i(y)=f_{i+1}(y)$ for any $y\in (-\varepsilon,\varepsilon)$, as needed.)
 
Finally,   since $f_i =f$ we conclude from \eqref{eq:m} that $M=f(L)$ and \eqref{eq:M} implies that $g'=g_f=g f(L)$ on 
 $V=U\cap  \phi(Y)$. Since both $g'$ and $g_f$ are real analytic, they therefore coincide on $U$ as we claimed.  \end{proof}

\subsection{Poisson-Nijenhuis structures near differentially non-degenerate singular points}

Recall that two  Poisson structures $P$ and $\widetilde P$   are called \emph{compatible}, if their sum $P + \widetilde P$ is also a Poisson structure. If $P$ and $\widetilde P$  are  
non-degenerate and therefore come from symplectic forms $\omega =P^{-1}$ and $\widetilde \omega=\widetilde P^{-1}$, the compatibility is equivalent to the property that the {\it recursion operator} $L$ given by the relation 
\begin{equation}\label{omega12} 
\widetilde \omega(\,\cdot \,,  \cdot \,)=  \omega ( \,L \,\cdot \,,  \,\cdot  ) 
\end{equation}
is Nijenhuis.  Since $\widetilde \omega$ can be recovered from $\omega$ and $L$, we can reformulate the compatibility condition as follows  (cf. Section \ref{sect:geod}).  We will say that a symplectic structure $\omega$ and a Nijenhuis operator $L$ are {\it compatible} if
\begin{itemize}
\item[(a)]  the form $\widetilde \omega(\,\cdot \,,  \cdot \,)=  \omega ( \,L \,\cdot \,,  \,\cdot  )$ is skew-symmetric, i.e., is a differential 2-form,

\item[(b)] this form is closed, i.e., $\dd\widetilde\omega = 0$.

\end{itemize} 

In the case when $L$ is non-degenerate,  a compatible pair  $(\omega, L)$ defines a Poisson-Nijenhuis structure in the sense of  \cite{kossman, magri2}.  However, non-degeneracy of $L$ is not very essential as we can replace it with $L + c\cdot\Id$, $c\in\R$ and here we will think of compatible pairs  $(\omega, L)$ as a natural subclass of Poisson-Nijenhuis structures.

If one wants to study singularities of Poisson-Nijenhuis structures (cf. Problem 5.17 in \cite{openprob}), then it is natural to ask which Nijenhuis operators admit compatible symplectic structures and what is a simultaneous canonical form for $\omega$ and $L$ near a (possibly singular) point $p\in M$?

Notice, first of all, that condition (a) imposes natural algebraic restrictions on the algebraic type of $L$: in the Jordan decomposition of $L$ all the blocks can be partitioned into pairs of equal blocks  (i.e. of the same size and with the same eigenvalue). In particular,  the characteristic polynomial of $L$ is a full square and each eigenvalue has even multiplicity.

If  $L$ has $n$ distinct real eigenvalues,  each of multiplicity 2,  and in addition their differentials are linearly independent at a point $p\in M$, then 
there exists a local canonical coordinate system $x_1,..., x_n, p_1, ... , p_n$ in which $L$ is diagonal with $x_i$ being its $i$th eigenvalue \cite{magri2}:  
\begin{equation} 
\label{eq:GZform}
\omega = \sum_i dp_i\wedge dx_i, \ \ L= \operatorname{diag}(x_1,x_2, ... ,x_n,\, x_1,x_2,...,x_n).
\end{equation}
Equivalenly, one can say that the pair  $(\omega, L)$ is a direct sum of two-dimensional blocks $(\omega_i, L_i)$ of the form $\omega_i=dp_i\wedge dx_i$,  $L_i = x_i \cdot \Id$.  This result can be naturally generalise to the case when $L$ is semisimple and algebraically generic  but may admit multiple and  complex eigenvalues (see \cite{magri2}, \cite{turiel}, \cite{gelzak3}).

Notice that  to admit a compatible symplectic partner,  a semisimple algebraically generic Nijenhuis operator $L$ should satisfy one additional condition, namely, its non-constant eigenvalues must be all of multiplicity 2  (cf. Section \ref{sect:geod}).

If $L$ is algebraically generic but not necessarily semisimple, the description (rather non-trivial) of compatible pairs $\omega, L$  was obtained by Turiel \cite{turiel} under some additional assumptions on the differentials of $\tr L^k$, $k=1,\dots, n$.  These assumptions basically mean that each eigenvalue is either constant or its differential does not vanish.

The next natural step is to study local normal forms for $\omega$ and $L$ at singular points that are differentially non-degenerate in a natural sense\footnote{Since each eigenvalue of $L$ has multiplicity at least two, the coefficients of the characteristic polynomial of $L$ cannot be functionally independent and for this reason, Definition \ref{def:nondeg} of differentially non-degenerate singular points should be appropriately modified, see Theorem \ref{thm:gz}.}. Note that this problem  is clearly important for finite dimensional integrable systems and we do hope that it might also be important for understanding of  bihamiltonian structures in infinite dimension appearing in the theory of integrable ODEs and PDEs.

\begin{Theorem} \label{thm:gz} Let $\omega$ and $L$ be compatible  (i.e., define a Poisson-Nijenhuis structure on $M^{2n}$) and real analytic. Suppose that  at a point $p\in M^{2n}$,  all the eigenvalues of $L$ vanish  but the differentials  $\ddd\tr L$, \dots, $\ddd\tr L^n$ are linearly independent.
Then  there exists a local coordinate system $x_1,...,x_n, p_1,...,p_n$ such that $\omega=\sum_i \ddd x_i\wedge \ddd p_i$ and $L$ is given by the matrix 
\begin{equation} \label{AS1} \begin{pmatrix} A & 0_n\\ S &  A^t\end{pmatrix},\end{equation}
where 
	\begin{equation} \label{AS2}
	A =\begin{pmatrix}  -x_1&    1   &   0   &   \cdots      &  0       \\
                 -x_2&  0     &     1 &             \ddots   &   \vdots     \\  
						  \vdots & \vdots &\ddots & \ddots                &  0  \\
                -x_{n-1}& \vdots &   & \ddots &1 \\
								-x_n& 0      &\cdots &   \cdots        &            0       
\end{pmatrix}, \ \ \   
S=\begin{pmatrix}  
								  0    & \!\! -p_2    &\!\! -p_3 &   \cdots &   \!\!    -p_n       \\
								  p_2  & 0       & 0 &     \cdots &      0        \\  
                  p_3  &  0 & \vdots &             &   \vdots  \\ 
								  \vdots&  \vdots &  \vdots       &      &  \vdots     \\  
							    p_n  &   0       &0 &	 \cdots 	  &    0       
\end{pmatrix},      \end{equation} 
$0_n$ is the zero $n\times n$-matrix, and $A^t$ denotes the transposed of $A$.
\end{Theorem}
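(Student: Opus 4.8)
The plan is to build simultaneous Darboux-type coordinates in three stages: first fix the ``position'' coordinates $x_i$ from the spectral data of $L$, then normalise the diagonal blocks via the quotient construction, and finally pin down the off-diagonal block $S$ using the closedness of $\widetilde\omega$. Since each eigenvalue of $L$ has even multiplicity, the characteristic polynomial is a full square $\chi_L(t)=q(t)^2$ with $q(t)=t^n+x_1t^{n-1}+\dots+x_n$ and smooth coefficients $x_i$; by Newton's identities $\mathrm{span}(\ddd x_1,\dots,\ddd x_n)=\mathrm{span}(\ddd\tr L,\dots,\ddd\tr L^n)$, so the $x_i$ are functionally independent near $p$ and vanish at $p$. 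For a Poisson--Nijenhuis pair the functions $\tr L^k$ are in involution with respect to $\omega$ \cite{magri2}, hence the $x_i$ Poisson-commute and define a Lagrangian fibration; I complete them to a Darboux chart $(x_i,p_i)$ centred at $p$, so that $\omega=\sum_i\ddd x_i\wedge\ddd p_i$.

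Next I would read off the block structure. By Proposition \ref{prop:inv_form1} one has $L^*\ddd\tr L^k=\tfrac{k}{k+1}\ddd\tr L^{k+1}$, so $\mathrm{span}(\ddd x_i)$ is $L^*$-invariant; its annihilator $\langle\partial_{p_1},\dots,\partial_{p_n}\rangle$ is therefore $L$-invariant. Combined with the $\omega$-self-adjointness of $L$ (which is precisely condition (a)) this forces
\[
L=\begin{pmatrix} A & 0 \\ C & A^T\end{pmatrix},\qquad C=-C^T .
\]
Moreover the matrix of $L^*$ on $\mathrm{span}(\ddd x_i)$ has entries that are functions of $x$ only, so by Proposition \ref{prop:quotient1} the quotient operator is well defined, equals the upper-left block $A=A(x)$, and is Nijenhuis on the base. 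Its characteristic polynomial is exactly $q(t)$, whose coefficients are the coordinates $x_i$; hence $A$ is differentially non-degenerate, and by Corollary \ref{cor:Bols1} (equivalently Theorem \ref{thm:part1:7}) it is, in the coordinates $x_i$, automatically the companion matrix $A$ of \eqref{AS2} --- no further change of the $x_i$ is needed.

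It remains to normalise $C=C(x,p)$. Writing $\widetilde\omega=\omega(L\,\cdot\,,\cdot\,)=\sum_{i<j}C_{ij}\,\ddd x_i\wedge\ddd x_j+\sum_{i,j}(A^T)_{ij}\,\ddd x_i\wedge\ddd p_j$ and imposing $\ddd\widetilde\omega=0$, the $\ddd x\wedge\ddd x\wedge\ddd p$ part expresses $\partial C_{ij}/\partial p_k$ through the explicit, $p$-independent entries of the companion matrix $A$; integrating in $p$ gives $C=S+C^0(x)$, where $S$ is precisely the skew matrix of \eqref{AS2} and $C^0$ is a $p$-independent skew remainder. The only residual freedom preserving $\omega$, the $x_i$, and hence $A$, is a fibrewise shift $p_i\mapsto p_i+\partial G/\partial x_i$, under which $C^0\mapsto C^0+HA-A^TH$ with $H=\mathrm{Hess}\,G$.

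The hard part is exactly this last step: one must show that the skew remainder $C^0(x)$ lies in the image of the Sylvester-type map $H\mapsto HA-A^TH$ for some symmetric Hessian $H$. Since $A$ and $A^T$ share all eigenvalues, this map is far from surjective, so the vanishing of $C^0$ cannot be purely formal and must be extracted from the full relation $\mathcal N_L=0$ (the $\ddd x\wedge\ddd x\wedge\ddd x$ component of $\ddd\widetilde\omega=0$ together with the remaining Nijenhuis components). I expect the cleanest route to be the one used in the proof of Theorem \ref{thm:ra}: on the dense open set where the $n$ roots $y_i$ of $q$ are distinct, diagonalise $A$ by passing to the eigenvalue coordinates $y_i=y_i(x)$ and invoke the classical Gelfand--Zakharevich/Turiel normal form \eqref{eq:GZform} for a semisimple, algebraically generic recursion operator \cite{turiel,gelzak3}; this identifies $(\omega,L)$ with the diagonalisation of the candidate form \eqref{AS1}, forcing $C^0=0$ there, and then real-analyticity propagates the equality across the singular locus through the symmetric-function map $\phi(y)=(\sigma_1,\dots,\sigma_n)$ exactly as in Theorem \ref{thm:ra}. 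Finally I would verify by direct computation that \eqref{AS1}--\eqref{AS2} is genuinely compatible (it is $\omega$-self-adjoint and satisfies $\ddd\widetilde\omega=0$), which both legitimises the target form and, via Corollary \ref{cor:Bols1}, explains its uniqueness.
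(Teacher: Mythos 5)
First, a point of reference: the paper does not actually prove Theorem \ref{thm:gz} --- it states explicitly that ``the proof of this theorem will be published separately as it is rather technical and is based on the analysis of some power series (that is why the real analyticity is important).'' So there is no argument in the text to compare yours against, only the authors' indication that the essential difficulty is resolved by a power-series analysis.

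Your reduction up to the block form is sound and in the spirit of the paper's toolkit: extracting the smooth square root $q$ of $\chi_L$ via the power sums, the involutivity of $\tr L^k$, the $L^*$-invariance of $\mathrm{span}(\ddd x_1,\dots,\ddd x_n)$, the resulting shape $\begin{pmatrix} A & 0\\ C & A^t\end{pmatrix}$ with $C$ skew, and the identification of $A$ with the companion matrix via Corollary \ref{cor:Bols1} are all correct. The genuine gap is exactly where you flag it, and your proposed fix does not close it. Showing that on the dense open set where $q$ has simple roots the pair $(\omega,L)$ is \emph{equivalent} to the Gelfand--Zakharevich/Turiel model \eqref{eq:GZform}, and that the candidate \eqref{AS1}--\eqref{AS2} is too, only tells you the two are related by \emph{some} local diffeomorphism on that open set; it does not force $C^0=0$ in the chart you have already fixed. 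To conclude equality of matrices you would need either (i) that $C^0$ is an invariant of the pair modulo the residual gauge group, which you have not shown, or (ii) that the residual gauge transformations $p\mapsto p+\ddd G$ act transitively on the possible remainders $C^0$ --- but that is precisely the surjectivity of $H\mapsto HA-A^tH$ onto the relevant space of skew matrices, which you yourself observe fails a priori because $A$ and $A^t$ have common spectrum. The argument is therefore circular at the decisive step. Unlike Theorem \ref{thm:ra}, where the object being propagated by analyticity is a globally defined scalar relation ($g'=gf(L)$ with $f$ pinned down on the regular set), here the object is a coordinate normalisation, and real-analytic continuation of an \emph{existence of a diffeomorphism} across the singular locus is not automatic. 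What is actually needed is a direct analysis of the remaining equations ($\ddd\widetilde\omega=0$ in the $\ddd x\wedge\ddd x\wedge\ddd x$ component together with $\mathcal N_L=0$) constraining $C^0(x)$, plus a solvability statement for the correction $G$ --- and this is evidently the ``technical power-series'' content the authors defer. As it stands, your proposal is a plausible strategy with the central lemma missing, not a proof.
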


The proof of this theorem will be published separately as it is rather technical and is based on the analysis of some power series  (that is why the real analyticity is important).

\section{Conclusion and further research}

In our paper we have tried to demonstrate that Nijenhuis Geometry may become an interesting and important area of research related to and having applications in many areas of mathematics and mathematical physics.  One of our goals has been to re-direct the research agenda of the theory of Nijenhuis operators  from ``tensor analysis at generic points'' to studying singularities and global properties, as it already happened before with other areas of differential geometry.   

A very stimulating example for us in this context was Poisson Geometry.   Having its origin in classical works of Poisson, Jacobi and especially Sophus Lie (who devoted the second volume of his famous ``Theorie der Transformationsgruppen'' \cite{Lie} to the study of Poisson algebras of functions), this area of mathematics for many decades remained just an instrument for Hamiltonian and quantum mechanics.  Everybody was familiar with the definition, basic properties and Darboux canonical form and this knowledge was basically sufficient. (Aren't we in a similar situation with Nijenhuis geometry now?) The situation dramatically changed in 70-80th of last century:  ``After a long dormancy, Poisson geometry has become an active field of research during the past 30 years or so, stimulated by connections with a number of areas...'' (A.\,Weinstein, 1998 \cite{weinstein2}).  These connections revealed, in particular, the importance of {\it singularities} and {\it non-local properties} of Poisson structures (see \cite{weinstein},  \cite{Lichner}, \cite{karasev}) and that was somehow a turning point.  That is the reason for our optimism in view of further research prospects for Nijenhuis Geometry.

As pointed out in Introduction, this work is the first in a series of papers we are currently working on.  The second of them \cite{Part3} will soon be available on arXiv and is entirely devoted to linearisation of Nijenhuis operators and their relations to left-symmetric algebras in the spirit of Section \ref{sect:5}.  In particular it contains the classification of all real left-symmetric algebras (equivalently, linear Nijenhus operators) in dimension 2  and description of those of them which are non-degenerate in the sense of Definition \ref{def:nondegLSA}  in smooth and real-analytic cases (the results turned out to be slightly different).

The topic of ``Nijenhuis Geometry 3''  are $\mathrm{gl}$-regular Nijenhuis operators.    In Section \ref{sect:4}, we  gave a local description of such operators under additional  assumption of differential non-degeneracy.    The canonical form given by \eqref{eq:14} is analogous to canonical $n$-parametric versal deformations of  $\mathrm{gl}$-regular matrices in the sense of Arnol'd \cite{Arnold}.   We are going to show that `versality property' still holds for  $\mathrm{gl}$-regular Nijenhuis operators in the sense that local canonical forms, in general, can be obtained from \eqref{eq:14} by replacing the `versal' parameters $x_1,\dots, x_n$ with some smooth functions $f_1(x_1,\dots, x_n), \dots, f_n(x_1,\dots, x_n)$.  These local results are a necessary step for stydying $\mathrm{gl}$-regular Nijenhuis manifolds  (which can be understood as analogues of regular Poisson and symplectic manifolds),  i.e., such that  the Nijenhuis operator $L$ is $\mathrm{gl}$-regular at each point of $M$, but not necessarily diagonalisable (cf. Problem 5.11 in \cite{openprob}).

Part 4 will be devoted to studying nilpotent Nijenhuis operators $L$ and their normal forms.  The simplest situation when $L$ is similar to a single Jordan block has been treated in Section \ref{sect:4} and we will generalise these techniques to the general case.  The main difficulty is that in general the distributions $\Ker L^k$ are not integrable anymore.  It is expected that the classification of nilpotent Nijenhuis operators can be reduced  to classification of (flags of) distributions with additional properties implied by the relation $\mathcal N_L=0$.  The latter problem is highly non-trivial, but this reduction would be interesting on its own in any case.  We will also discuss canonical forms for Nijenhuis operators with a single eigenvalue $\lambda$ (not necessarily constant) in the case of several Jordan $\lambda$-blocks.  According to Splitting Theorem (Theorem \ref{thm1}),  completing this programme would lead to solution of the local description problem (Question 1 from Introduction) and we will do it for low dimensions.

Finally, ``Nijenhuis Geometry 5'', with a tentative subtitle {\it Nijenhuis Zoo},  will collect various examples, facts and constructions related to Nijenhuis operators with emphasis on singularities and global properties. In particular, we explain how to construct a real analytic $\R$-diagonalisable (at each point) Nijenhuis operator $L$ on a closed surface of arbitrary genus  $g\ge 0$ and why, under additional condition that each singular point of $L$ is non-degenerate, such operators exist only for $g=0$ and $1$  (sphere and torus).  We will also discuss examples of left symmetric algebras of arbitrary dimension and related constructions.
For instance, we will prove an interesting uniqueness result: if  $\det L(x) = \Pi_{i=1}^n x_i$ for a linear Nijenhuis operator $L(x)=\bigl( l^i_{jk} x_k\bigr)$, then $L=\mathrm{diag}(x_1,\dots,x_n)$. 

In conclusion, we refer to  \cite[Section 5(c)]{openprob} devoted to  open questions in Nijenhuis Geometry.



\begin{thebibliography}{99}


\bibitem{Arnold}  V.\,Arnold, On matrices depending on parameters, Russian Mathematical Surveys 26 (1971), no. 2, 29--43.


\bibitem{Artin}  M.\,Artin, On the solutions of analytic equations,  Invent. Math. 5 (1968),  277--291.


\bibitem{MatveevNij}  P.\,de\,Bartolomeis, V.\,Matveev,  Some remarks on Nijenhuis bracket, formality, and K\"ahler
manifolds.  Adv. Geom. 13 (2013), 571--581. (doi:10.1088/1361-6544/aa5b26)


\bibitem{bogoyavlensky} O.\,Bogoyavlenskij, Algebraic identities for the Nijenhuis tensors, Differential Geometry and its Applications, 24 (2006), 5,  447--457.

\bibitem{BolsIzos}  A.\,Bolsinov, A.\,Izosimov,  Singularities of bi-Hamiltonian systems. Commun. Math. Phys.
331(2014), 507--543. (doi:10.1007/s00220-014-2048-3)


\bibitem{BoMa2003} A.\,Bolsinov, V.\,S.\,Matveev, Geometrical interpretation of Benenti systems,   Journal of Geometry and Physics, 44 (2003), 489--506.


\bibitem{BoMa2011} A.\,Bolsinov, V.\,S.\,Matveev, Splitting and gluing lemmas for geodesically equivalent pseudo-Riemannian metrics, Trans. Amer. Math. Soc.   363 (2011), 8, 4081--4107. 


\bibitem{BoMa2015} A.\,Bolsinov, V.\,S.\,Matveev, Local normal forms for geodesically equivalent pseudo-Riemannian metrics,   Trans. Amer. Math. Soc. 367 (2015), 6719--6749.  
 
 
\bibitem{openprob}  A.\,Bolsinov, V.\,Matveev, E.\,Miranda, S.\,Tabachnikov,  Open problems, questions, and
challenges in finite-dimensional integrable systems, Phil. Trans. R. Soc. A  376 (2018), 20170430, arXiv:1804.03737.

\bibitem{Boubel} C.\,Boubel, An integrability condition for fields of nilpotent endomorphisms, arXiv:1003.0979 [math.DG].


\bibitem{burde2} D.\,Burde, Simple left-symmetric algebras with solvable Lie algebra, Manuscripta Mathematica, 95 (1998), 1, 397--411.


\bibitem{burde} D.\,Burde, Left-symmetric algebras, or pre-Lie algebras in geometry and physics, Central European Journal of Mathematics 4 (2006),  3, 323--357.


\bibitem[Ca]{mergelyan} L.\,Carleson, Mergelyan's theorem on uniform polynomial approximation, Math. Scand., {\bf  15}(1964),  167--175.


\bibitem{Fer1}
E.\,V.\,Ferapontov, Compatible Poisson brackets of hydrodynamic type,J. Phys. A: Math. Gen. 34 (2001), 2377--2388.


\bibitem{Fer2}
E.\,V.\,Ferapontov, P.\,Lorenzoni and A.\,Savoldi, Hamiltonian operators of Dubrovin-Novikov type in 2D, Lett. Math. Phys.  105 (2015), 3, 341--377.


\bibitem{Fraser}
R.\,A.\,Frazer,  W.\,J.\,Duncan and A.\,R.\,Collar,  Elementary Matrices and Some Applications to Dynamics and Differential Equations. Cambridge, England: Cambridge University Press,  1955.


\bibitem{fn1} A.\,Frolicher, A.\,Nijenhuis,  Theory of vector valued differential forms. Part I, Indagationes Math. 18 (1956), 338--359.



\bibitem{gelzak3}
I.\,M.\,Gelfand,  I.\,Zakharevich,
On the local geometry of a bi-Hamiltonian structure, in The Gel'fand Mathematical Seminars, 1990--1992,  Birkh\"auser Boston, Boston, MA, 1993, 51--112.

\bibitem{Grifone} J.\,Grifone, M.\,Mehdi,  Existence of conservation laws and characterization of recursion operators for completely integrable systems. Trans. Amer. Math. Soc. 349 (1997),  11, 4609--4633.


\bibitem{haant} J.\,Haantjes,  On $X_m$-forming sets of eigenvectors, Indagationes Mathematicae 17 (1955), 158--162.


\bibitem[Hi]{higham} N.\,J.\,Higham, Functions of matrices. 
Theory and computation,  Society for Industrial and Applied Mathematics (SIAM), Philadelphia, PA, 2008. 


\bibitem{karasev} M.\,V.\,Karasev, Analogues of objects of the theory of Lie groups for nonlinear Poisson brackets. (Russian) Izv. Akad. Nauk SSSR Ser. Mat. 50 (1986), no. 3, 508--538. 


\bibitem{kob} E.\,T.\,Kobayashi, A remark on the Nijenhuis tensor, Pacific Journal of Mathematics volume 12 (1962), 4, 1467--1467.


\bibitem{Kolar} I.\,Kolar, P.\,W.\,Michor,  J.\, Slovak,  Natural Operations in Differential Geometry. Berlin: Springer-Verlag, 1993.


\bibitem{Part3}  A.\,Konyaev,  Nijenhuis geometry 2:  left-symmetric algebras and linearisation problem  (in preparation).


\bibitem{akon} A.\,Konyaev, Completeness of Some Commutative Subalgebras Associated with Nijenhuis Operators on Lie Algebras, Doklady Mathematics 97(2), pp. 137-139, 2008


\bibitem{kossman}Y.\,Kosmann-Schwarzbach, F.\,Magri, Poisson-Nijenhuis structures. Ann. Inst. Henri Poincare, 53 (1990), 35--81.


\bibitem{Levi-Civita}
 T.\,Levi-Civita, Sulle trasformazioni delle equazioni
 dinamiche, Ann. di Mat., serie $2^a$,  24 (1896), 255--300.


\bibitem{Lichner}
A.\,Lichnerowicz, Les vari\'et\'es de Poisson et leurs alg\`ebres de Lie associ\'ees, J. Differential Geom.
 12 (1977), 2, 253-300.


\bibitem{Lie} S.\,Lie, Theorie der Transformationsgruppen, Zweiter Abschnitt, unter Mitwirkung von Prof. Dr. Friedrich
Engel (Teubner, Leipzig, 1890).


\bibitem{magri1}  F.\,Magri,   A simple model of the integrable Hamiltonian equation. J. Math. Phys. 19 (1978),
1156--1162. (doi:10.1063/1.523777)


\bibitem{magri2}  F.\,Magri, C.\,Morosi,   A geometrical characterization of integrable Hamiltonian systems
through the theory of Poisson--Nijenhuis manifolds, (1984) Preprint (Universit\`a di Milano,
Dipartimento di Matematica 'F. Enriques', Quaderno S 19/1984).


\bibitem{matvadd} V.\,S.\,Matveev, On projectively equivalent metrics near points of bifurcation, in: Topological methods in the theory of integrable systems (Eds.: Bolsinov A.V., Fomenko A.T., Oshemkov A.A.), Cambridge Scientific Publishers, pp. 213--240, arXiv:0809.3602.


\bibitem{MatveevRigid}  V.\,S.\,Matveev, Hyperbolic manifolds are geodesically rigid, Invent. math. 151(2003), 579--609.





\bibitem{nij}  A.\,Nijenhuis,  $X_{n-1}$-forming sets of eigenvectors. Proc. Kon. Ned. Akad. Amsterdam 54 (1951), 200--212.


\bibitem{nirfro} L.\,Nirenberg, A complex Frobenius theorem, Seminar on Analytic functions, Institute for Advanced Study (1957), 172--189.


\bibitem{sokodess} A.\,V.\,Odesskii, V.\,V.\,Sokolov, Integrable matrix equations related to pairs of compatible associative algebras, J. Phys. A: Math. Gen. 39 (2006), 12447--12456.


\bibitem{pan} A.\,Panasyuk, Algebraic Nijenhuis operators and Kronecker Poisson pencils, Differential Geometry and its Applications 24 (2006), 482--491.


\bibitem{Puninskii}  E.\,G.\,Puninskii,  Natural operators on tensor fields. Translation of Vestnik Moskov. Univ. Ser. 1, Mat. Mekh. 2014, no. 5, 58--62. Moscow Univ. Math. Bull. 69 (2014), no. 5, 225--228. 


\bibitem{Si1}N.\,S.\,Sinjukov, Geodesic mappings of Riemannian spaces, (in Russian) Nauka, Moscow, 1979. 


\bibitem{steenrod} N.\,Steenrod,  The Topology of Fibre Bundles. Princeton Mathematical Series, Vol. 14. Princeton University Press, Princeton, N. J., 1951.



\bibitem{Thompson}
G.\,Thompson, The integrability of a field of endomorphisms. Math. Bohem. 127 (2002) 4, 605--611. 
 

\bibitem{To} P.\,Topalov, Families of metrics geodesically equivalent to the analogs of the Poisson
sphere, J. Math. Phys. 41(2000), 11, 7510--7520.

 

\bibitem{turiel}
F.\,J.\,Turiel, Classification locale simultan\'ee de deux formes symplectiques compatibles,  Manuscripta Math. 82 (1994), 349--362.



\bibitem{vinberg} E.\,B.\,Vinberg, Convex homogeneous cones. Transl. Moscow Math. Soc. 12 (1963), 340--403.


\bibitem{weinstein} A.\,Weinstein, The local structure of Poisson manifolds, J. Differential Geometry 18 (1983), 523--557



\bibitem{weinstein2} A.\,Weinstein, Poisson geometry, Diff. Geom. and Appl. 9 (1998) 213--238.
 



\bibitem{winterhalder} A.\,Winterhalder, Linear Nijenhuis tensors and the construction of integrable systems,  (1997) arXiv.org:9709008.



\end{thebibliography}
\end{document}